\documentclass[a4paper,11pt]{amsart}%,dvipdfm

\usepackage{mathrsfs} 

% pour pdf avec tab mat mais marche pas...
%\usepackage[dvipdfm,bookmarks=true]{hyperref}
\usepackage{amsthm}
\usepackage{hyperref}
\usepackage{amssymb}
\usepackage{latexsym}
\usepackage{amsmath}
\usepackage{amsfonts}
\usepackage[dvips]{graphicx}
\usepackage[utf8]{inputenc}
\usepackage{placeins}

\usepackage[T1]{fontenc}
\usepackage{url}
\usepackage{enumitem}
\usepackage{hyperref}
\usepackage{color}
\usepackage{bbm}
\hypersetup{colorlinks=true,citecolor=red}

\usepackage{mathtools}                                                      % need for `show only references'
\mathtoolsset{showonlyrefs=true}                                    % only equations which are labeled AND referenced will be numbered.

\usepackage[colorinlistoftodos,bordercolor=orange,backgroundcolor=orange!20,linecolor=orange,textsize=scriptsize]{todonotes}

\usepackage[deletedmarkup=xout]{changes}

\definecolor{darkblue}{rgb}{0,0,.5}

%%%%
%  MISE EN PAGE
%%%%
\usepackage{fancyhdr}
\fancyhead{}

\addtolength{\oddsidemargin}{-0.07 \textwidth}     %% -2b
\addtolength{\evensidemargin}{-0.07 \textwidth}    %% -b
\addtolength{\textwidth}{0.15 \textwidth}          %% 3b

%%%%
%  Theorem
%%%%

\newtheorem{Theorem}{Theorem}[part]
\newtheorem{Definition}{Definition}[part]
\newtheorem{Proposition}{Proposition}[part]

\newtheorem{Assumption}{Assumption}[part]
\newtheorem{Lemma}{Lemma}[part]
\newtheorem{Corollary}{Corollary}[part]
\newtheorem{Remark}{Remark}[part]

% equation numbering section.number
\makeatletter 
%\@addtoreset{equation}{section}
%\def\theequation{\thesection.\arabic{equation}}
\@addtoreset{Definition}{section}

\@addtoreset{Theorem}{section}

\@addtoreset{Proposition}{section}

\@addtoreset{Property}{section}

\@addtoreset{Assumption}{section}

\@addtoreset{Corollary}{section}

\@addtoreset{Lemma}{section}

\@addtoreset{Remark}{section}

\@addtoreset{Example}{section}

\DeclareMathOperator{\spectre}{Sp}

\DeclareMathOperator{\Id}{Id}

\newcommand \Kfx{K_{f,x}}
\newcommand \Kfz{K_{f,z}}
\newcommand \Kbx{K_{b,x}}

%%%% Lukas definition %%%%%

\def \CA{C_A}

%Bold font

\newcommand{\bE}{\mathbb{E}}

\newcommand{\bL}{\mathbb{L}}

\newcommand{\bP}{\mathbb{P}}

\newcommand{\bR}{\mathbb{R}}

%Calipgrahic

\newcommand{\cF}{\mathcal{F}}
\newcommand{\cG}{\mathcal{G}}

\newcommand{\cL}{\mathcal{L}}

\newcommand{\cN}{\mathcal{N}}

%% \mathscr letters
\newcommand{\sM}{\mathscr{M}}
\newcommand{\sS}{\mathscr{S}}

%% Greeks %%%%%%%%

\def\1{\mathbf{1}}

\newcommand \normr[1]{\left\|#1\right\|_{\rho}}

\newcommand \E[1]{\bE\left[#1\right]}
\newcommand \Xx{X^x}

\newcommand \dd{{\rm d}}
\newcommand \dr{\dd r}
\newcommand \ds{\dd s}
\newcommand \dt{\dd t}
\newcommand \dx{\dd x}

\newcommand \dy{\dd y}
\newcommand \dW{\dd W}

\newcommand \normM[1]{\left\|#1\right\|}
\newcommand \normp[1]{\left|#1\right|_p}

\newcommand \norm[1]{\left|#1\right|}
\newcommand \Tr{{\rm Tr}}

\title[Numerical approximation of ergodic BSDEs]{Numerical approximation of ergodic BSDEs using  non linear Feynman-Kac formulas}

\author{Emmanuel Gobet}
\address{Centre de Math\'ematiques Appliqu\'ees (CMAP), CNRS, Ecole Polytechnique, Institut Polytechnique de Paris, Route de Saclay, 91128 Palaiseau Cedex, France}
\email{emmanuel.gobet@polytechnique.edu}

\author{Adrien Richou}
\address{Universit\'e de Bordeaux, Institut de Math\'ematiques de Bordeaux,
UMR CNRS 5251, 351 Cours de la Lib\'eration, 33405 Talence cedex, France.
}
\email{adrien.richou@math.u-bordeaux.fr}

\author{Lukasz Szpruch}
\address{School of Mathematics, University of Edinburgh, UK, and The Alan Turing Institute,
UK and Simtopia, UK}
\email{l.szpruch@ed.ac.uk}

\thanks{{The first author research has benefited from the support of the Chaire "Risques Financiers" of Fondation du Risque, and of the Chaire "Stress Test, RISK Management and Financial Steering" of the Ecole Polytechnique Foundation. The second author research has benefited from the support of the ANR Project ReLISCoP (ANR-21-CE40-0001).}}

\begin{document}

\begin{abstract}
In this work we study the numerical approximation of a class of ergodic Backward Stochastic Differential Equations. These equations are formulated in an infinite horizon framework and provide a probabilistic representation for elliptic Partial Differential Equations of ergodic type. In order to build our numerical scheme, we put forward a new representation of the PDE solution by using a classical probabilistic representation of the gradient. Then, based on this representation, we propose a fully implementable numerical scheme using a Picard iteration procedure, a grid space discretization and a Monte-Carlo approximation. Up to a limiting technical condition that guarantees the contraction of the Picard procedure, we obtain an upper bound for the numerical error. We also provide some numerical experiments that show the efficiency of this approach for small dimensions.
\\

\end{abstract}

\maketitle

{\sc Keywords.} ergodic BSDEs, probabilistic numerical scheme, elliptic PDEs, Feynman-Kac representation.

{\sc MSC Classification (2020):} 65C30, 65C20, 65M12, 60H35.

\section{Introduction}
\subsection{Statement of the problem.}
{We study the numerical solution $(Y,Z,\lambda)$ of the Ergodic Backward Stochastic Differential Equation (EBSDE)
\begin{equation}
 \label{eq:EBSDE:intro}
 Y_t = Y_T + \int_t^T \left(f(X_s,Z_s)-\lambda\right) \ds -\int_t^T Z_s \dW_s,\quad 0\leqslant t \leqslant T,
\end{equation}
where the processes $(Y,Z)$ take values in some appropriate $\bL_2$ space, and $\lambda$ is a scalar (called \emph{ergodic cost}). Here $X$ is the solution of an ergodic forward SDE; detailed assumptions will be stated later, see Section \ref{section:model}. Our goal is to design a new numerical scheme for computing the solution of \eqref{eq:EBSDE:intro}, and to provide its error analysis with some numerical experiments to illustrate its performance.}

{Ergodic BSDE, introduced first by \cite{Fuhrman-Hu-Tessitore-07}, is an efficient tool to analyse optimal control problems with ergodic cost functionals; other methods are based on the Hamilton-Jacobi-Bellman equation, see for instance \cite{Arisawa-Lions-98} and \cite{Bensoussan-Frehse-02}. Let us highlight the link between EBSDE and stochastic control problem. It is now well known that, in a quite general setting, adjoint problems for stochastic control problems are given by solutions to BSDEs and their resolution gives access to the optimal control,  see \cite{Peng-93}-\cite{ma:yong:99}-\cite{zhan:17} for instance. Namely, consider the solution $(Y^T,Z^T)$ of the following BSDE, parameterized by $T>0$:
\begin{equation}
 \label{eq:BSDEfinitehorizon}
 Y_t^{T,x} = g(X_T^x)+\int_t^T f(X_s^x,Z^{T,x}_s) \ds -\int_t^T Z_s^{T,x} \dW_s,\quad 0\leqslant t \leqslant T.
\end{equation}
Under suitable assumptions (see \cite[Theorem 4.4]{Hu-Madec-Richou-14}) the following asymptotic expansion result holds: for some constants $L \in \bR$ and $C>0$,
\begin{align}\label{eq:BSDEfinitehorizon:erreur}
\norm{Y_0^{T,x} - \lambda T -Y_0^{x} -L} \leqslant C(1+|x|^3)e^{-T/C}
\end{align} 
where $Y^x_0$ is the solution of \eqref{eq:EBSDE:intro} for $X_0=x$.
This shows that solving \eqref{eq:EBSDE:intro} for any $x$ gives an explicit approximation for $Y_0^{T,x}$ in \eqref{eq:BSDEfinitehorizon} as $T$ is large.}

\subsection{State of the art.} 
{Although theoretical properties of EBSDEs have been well studied in the literature, see e.g. \cite{Fuhrman-Hu-Tessitore-07}, \cite{Richou-09}, \cite{Debussche-Hu-Tessitore-10}, \cite{Cohen-Hu-13}, \cite{Madec-14}, \cite{Guatteri-Tesitore-20}, %\emm{citer la contribution principale de chaque papier?} 
{to the best of our knowledge, the recent article \cite{BrouxQuemerais-Kaakai-Matoussi-Sabbagh-24} is the only one that provides a numerical scheme dedicated to the resolution of EBSDEs. This scheme relies on a random horizon time approximation and a neural network space approximation. Our aim is to provide an alternative fully implementable scheme and to study the approximation error.}

About numerics for ergodic control, we refer to \cite{camp:legl:pard:00} and \cite{campillo2005numerical}  which approximate the stochastic control problem in infinite horizon using Markov chain approximations.

The literature about numerics for BSDEs is huge and it is mostly restricted to the case of finite horizon problem, in contrast with the infinite horizon setting of \eqref{eq:EBSDE:intro}. For the study of discretization errors under standard regularity conditions, see \cite{Zhang-04}; for singular terminal conditions, see \cite{Geiss-Geiss-Gobet-12}; for quadratic growth driver, see \cite{Chassagneux-Richou-13}. For an overview of numerical methods for BSDEs (in finite horizon), see the recent review \cite{ches:kawa:shin:yama:21}.
%When the conditional expectations  arising from  time discretization are computed using regression Monte-Carlo methods, the full error analysis (depending on the approximation space  -- bias error -- and the number of simulations -- statistical error) is conducted in  \cite{Gobet-Lemor-Warin-06} and \cite{gobe:turk:16}. 
However, none of these works cover the case of infinite horizon BSDEs, except \cite{Beck-Gonon-Jentzen-20} who deals with infinite horizon BSDEs without dependency with respect to the $Z$-component {and, as already mentioned, the recent article \cite{BrouxQuemerais-Kaakai-Matoussi-Sabbagh-24} concerning EBSDEs.}}

\subsection{Our contributions and organization of the paper.}
{Our aim is therefore to design a first numerical scheme in multidimensional setting for solving EBSDE. For this, we establish a Markov representation of the value function and its gradient: for specialists, this is presumably not a surprizing result, but to the best of our knowledge, it was not  done so far. This is performed under the classical dissipativity assumption (Assumptions \ref{eq:generatorfandb:3}) and under a less usual but quite natural Hurwitz stability condition (Assumptions \ref{eq:generatorfandb':2}); see Proposition \ref{prop:existence:uniqueness:EBSDE}.
Then, 
we derive (see Theorem \ref{thm:representationBSDE:ergodic}) a fixed point equation to which the gradient is (the unique) solution: this equation writes as an expectation of functionals involving the required solution and sub-Gamma random variables. Some contraction properties of this fixed point equation are also investigated in Subsection \ref{subsec:contraction}. All these analytical results are the purpose of Section \ref{section:model}.}

{This probabilistic fixed point representation of the solution associated to a finite grid discretization of the space allows to design a suitable Picard iteration scheme in Section \ref{sec:numericalscheme}, for which we prove full convergence rates, with respect to the number of Picard iterations, the number of Monte-Carlo samples, the grid mesh on which the numerical solution is computed: see Proposition \ref{prop:numericalerror} and Corollary \ref{prop:errornum:ergodic}. 
On the technicalities side, we establish smoothness properties of the fixed-point mapping in suitable weighted norms, and we leverage  concentration-of-inequalities of Bernstein type (suitable for subGamma tails) to control uniformly statistical errors. 

Finally, some numerical experiments that illustrate theoretical convergences obtained are presented in Section \ref{sec:num:exp}.}

\subsection{Notations}
{In all this work, we consider a filtered probability space $(\Omega, \cF, (\cF_t)_{t\geq 0},\bP)$ which supports a $d$-dimensional Brownian motion $W=(W^1,\dots,W^d)^\top$. The filtration $(\cF_t)_{t\geq 0}$ is the one generated by $W$ augmented by the $\bP$-null sets, so that the filtration satisfies the "usual conditions".

\begin{description}
\item[Vector, matrix]
$\norm{x}$ denotes the Euclidean norm of $x\in \bR^d$; when $A$ is a matrix $\normM{A}$ stands for the matrix $2$-norm (i.e. subordinated to the Euclidean norm); $x^\top$ denotes the transpose of the vector $x$ and $\Tr(A)$ denotes the trace of $A$. For a vector $x \in \mathbb{R}^p$ (resp. a matrix $A \in \bR^{p \times q}$) and $R \in \mathbb{R}^+$, we denote $\lfloor x \rfloor_R$ {(resp. $\lfloor A \rfloor_R$)} the projection of $x$ (resp. $A$) on the Euclidean ball $\bar B(0,R)$ of $\bR^p$ (resp. $\bR^{p \times q}$).%\emmi{a t 'on besoin de la notation pour les matrices?}
\item[Function]
$C^0(\bR^p,\bR^q)$ denotes the set of functions $f : \bR^p \rightarrow \bR^q$ that are continuous.
$C^1(\bR^p,\bR^q)$ (resp. $C^1_b(\bR^p,\bR^q)$) denotes the set of functions $f : \bR^p \rightarrow \bR^q$ that are differentiable with a continuous (resp. continuous bounded) derivative. For a bounded function $f \in C^0(\bR^p,\bR^q)$ (resp. $C^0(\bR^p,\bR^{q\times r})$), we denote $|f|_{\infty}:=\sup_{x \in \mathbb{R}^p}|f(x)|$ (resp. $\|f\|_\infty:=\sup_{x \in \mathbb{R}^p}\|f(x)\|$).

For a function $f:=(f_i)_{1 \leqslant i \leqslant p} \in C^1_b(\bR^p,\bR^q)$, we denotes $\nabla_x f$ the function $\bR^p \owns x \mapsto (\partial_{x_j} f_i)_{i,j} \in \bR^{p,q}$. In particular, when $p=1$, $\nabla_x f$ is a row-vector valued function.
\item[Random variables and stochastic processes] 
For $p\geq 1$, $\bL_p$ denotes the set of (scalar or vector-valued) random variables $X$ with finite norm $\normp{X}:=(\E{\norm{X}^p})^{1/p}<+\infty$. $\bL_\infty$ stands for the set of essentially bounded random variables.

$\sS^2_T$ is the set of scalar adapted continuous processes $Y$ on $[0,T]$ such that 
$$\E{ \sup_{s \in [0,T]} |Y_s|^2}<+\infty.$$
$\sS^2_{loc}$ denotes the set of continuous processes $Y$ on $\bR^+$ such that  $(Y_t)_{t \leqslant T} \in \sS^2_T$, for all $T>0$.
$\sM^2_T$ is the set of $\bR^{1\times d}$-valued predictable processes $Z$ on $[0,T]$ such that
$$\E{ \int_0^T \|Z_s\|^2 \ds }<+\infty.$$
Observe that we write $Z_t$ as a row vector (in a coherent manner with writing the stochastic integral $\int_0^t Z_s \dW_s$).
$\sM^2_{loc}$ denotes the set of continuous processes $Z$ on $\bR^+$ such that $(Z_t)_{t \leqslant T} \in \sM^2_T$, for all $T>0$.

\item[Specific distributions] 
For $\ell>0$ and $a>0$, we denote $\Gamma(a,\ell)$ the gamma distribution with density (with respect to the Lebesgue measure)
$$x \mapsto \frac{\ell^a}{\Gamma(a)}x^{a-1}e^{-\ell x}\1_{(0,+\infty)}(x).$$
We recall the scaling property between distributions $\Gamma(a,\ell)\stackrel{d}=\ell^{-1}\Gamma(a,1)$, {and the special value of the Gamma function $\Gamma(\frac 12)=\sqrt \pi$.}

We denote by $F$ the cumulative distribution function of the Gaussian distribution $\mathcal{N}(0,1)$.
\end{description}}

\section{Analytical results}
\label{section:model}
{\subsection{Model and value function}
We consider the following ergodic BSDE
\begin{equation}
 \label{eq:EBSDE}
 Y_t = Y_T + \int_t^T \left(f(X_s,Z_s)-\lambda\right) \ds -\int_t^T Z_s \dW_s,\quad 0\leqslant t \leqslant T,
\end{equation}
where $(Y,Z,\lambda)$ is a solution in the space $\sS^2_{loc}\times \sM^2_{loc} \times \bR$ and $X$ is the solution of the forward $d$-dimensional SDE
\begin{align}
 \label{eq:SDE}
 X_t = x + \int_0^t b(X_s) \ds + \Sigma W_t, \quad 0 \leqslant t.
\end{align}
Existence and uniqueness of $(Y,Z,\lambda)$ solution to \eqref{eq:EBSDE} will be stated  in Proposition \ref{prop:existence:uniqueness:EBSDE}.
%Dimensions of $X$ and the Brownian motion $W$ are equal to $d$.
We assume following assumptions on $f$ and $b$.
\begin{Assumption}
 \label{eq:generatorfandb}
There exist constants $\Kfx\geqslant 0$, $\Kfz\geqslant 0$, $\Kbx\geqslant 0$ and $\eta>0$ such that, $\forall x,x' \in \bR^d, z,z' \in \bR^{1 \times d}$,
 \begin{enumerate}[labelindent=\parindent,leftmargin=*,label={\bf (A-\arabic*)}]
  \item  \label{eq:generatorfandb:1} $\norm{f(x,z)-f(x',z')} \leqslant \Kfx\norm{x-x'}+\Kfz\|z-z'\|,$
  \item  \label{eq:generatorfandb:2} $\norm{b(x)-b(x')} \leqslant \Kbx\norm{x-x'},$
  \item \label{eq:generatorfandb:3} $\langle b(x)-b(x'),x-x' \rangle \leqslant -\eta \norm{x-x'}^2,$
  \item \label{eq:generatorfandb:4} $\Sigma$ is invertible.
 \end{enumerate} 
\end{Assumption}

Since $b$ is a Lipschitz function, the SDE \eqref{eq:SDE} has a unique strong solution for any starting point $x$ at time $0$: whenever necessary to emphasize on the  $x$-dependence of the solution, we shall denote it by  $X^x$. 
Owing to the condition of  \ref{eq:generatorfandb:3}, the solution $X$ admits a unique invariant measure denoted $\nu$, see \cite[Chapter 4]{khas:12}.%\emm{ajouter une reference plus precise. DONE}
}

{Let us emphasize that these assumptions can be {weakened} in several directions in order to study the well-posedness or some properties of EBSDEs. For example, some existence and uniqueness results under weaker dissipativity assumptions than \ref{eq:generatorfandb:3} are also available in \cite{Debussche-Hu-Tessitore-10,Hu-Madec-Richou-14}: in these papers, we are allowed to consider an extra bounded nonlinear term in the drift of $X$. It is also possible to relax the invertibility of $\Sigma$ or to consider a multiplicative noise: see e.g. \cite{Fuhrman-Hu-Tessitore-07,Richou-09,Guatteri-Tesitore-20}. 

For some reasons that will be explained after, we will restrict our study from Subsection \ref{sec:Feynman-Kac-representation} to the special case where $X$ is solution of an ergodic multidimensional Ornstein-Uhlenbeck process. Namely, we will assume that the drift $b(x)=-Ax$ with a Hurwitz matrix $-A$ {(see condition \ref{eq:generatorfandb':2})}, so that the dynamics of $X$ writes
\begin{equation}
\label{eq:OU}
X_t = x-\int_0^t AX_s \ds + \Sigma W_t.
\end{equation}
Choosing this model family is not so restrictive in practice because of the popularity of this model in applications.
In this special case, Assumptions \ref{eq:generatorfandb} are replaced by the following alternative assumptions.

\begin{Assumption}
 \label{eq:generatorfandb'}
 There exists a matrix $A \in \mathbb{R}^{d \times d}$ such that $b(x)=-Ax$ for all $x \in \mathbb{R}^d$, and there exist constants $\Kfx\geqslant 0$, $\Kfz\geqslant 0$ and $a>0$ such that, $\forall x,x' \in \bR^d, z,z' \in \bR^{1 \times d}$,
 \begin{enumerate}[labelindent=\parindent,leftmargin=*,label={\bf (A-\arabic*')}]
  \item  \label{eq:generatorfandb':1} $\norm{f(x,z)-f(x',z')} \leqslant \Kfx\norm{x-x'}+\Kfz\|z-z'\|,$
  \item  \label{eq:generatorfandb':2} $\spectre{A} \subset \{z \in \mathbb{C} | \Re(z)>a\}$.
  \item \label{eq:generatorfandb':3} $\Sigma$ is invertible.
 \end{enumerate} 
\end{Assumption}}

{Let us remark that, when $A$ is a symmetric matrix, Assumptions \ref{eq:generatorfandb} are
easily fulfilled as soon as Assumptions \ref{eq:generatorfandb'} are satisfied. Nevertheless this it is no longer true for a general Hurwitz matrix: for instance $A=\begin{pmatrix} 1 & -3 \\ 0 & 1\end{pmatrix}$ is Hurwitz but $x\cdot A x \ngeqslant 0$ for some $x$. So, the introduction of these alternative assumptions is justified by the fact that we do not want to restrict our study to symmetric matrices $A$. Finally, for any Hurwitz matrix $-A$, i.e. satisfying \ref{eq:generatorfandb':2} with $a>0$, we define $\CA$ as the smallest constant such that 
\begin{equation}
\label{eq:bound:expA}
    \normM{e^{-At}}\leqslant \CA e^{-at}, \qquad \forall t\geqslant 0.
\end{equation}
Let us remark that this constant always exists and $\CA \geqslant 1$, see e.g. \cite[Section 2.1]{gobe:she:16} for some explicit estimates on this constant. Moreover, $\CA=1$ as soon as $A$ is symmetric.}

{\begin{Proposition}
\label{prop:existence:uniqueness:EBSDE}
 Let us assume that Assumptions \ref{eq:generatorfandb} or \ref{eq:generatorfandb'} are in force. 
Then the ergodic BSDE \eqref{eq:EBSDE} has a solution $(Y,Z,\lambda)$ such that
\begin{equation}
 \label{eq:repMarkEBSDE}
  Y_t = u(X_t), \qquad Z_t = \bar{u}(X_t)
 \end{equation}
 for two measurable functions satisfying the
 growth
  \begin{align}\label{eq:prop:existence:uniqueness:EBSDE:bounds}
 |u(x)| \leqslant C(1+|x|),\qquad %\|\bar{u}(x)\|
 {|\bar{u}(x)|}
 \leqslant C, \qquad \forall x\in \bR^d.
 \end{align}
Moreover, the solution 
$(Y,Z,\lambda)$  is unique (up to a constant for $Y$) in the class of Markovian solutions with previous growth.
%\gris{$\lambda$ is unique and, if we assume that $\Sigma$ is invertible, $(Y,Z)$ is also unique (up to a constant for $Y$) in the class of Markovian solutions with previous growth.}
\end{Proposition}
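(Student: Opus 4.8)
The plan is to obtain $(Y,Z,\lambda)$ as the vanishing-discount limit of a family of discounted infinite-horizon Markovian BSDEs, following the now classical route of \cite{Fuhrman-Hu-Tessitore-07}. For each $\alpha>0$ I would first solve the infinite-horizon BSDE
\begin{equation}
Y_t^{\alpha}=Y_S^{\alpha}+\int_t^S\bigl(f(X_s,Z_s^{\alpha})-\alpha\,Y_s^{\alpha}\bigr)\ds-\int_t^S Z_s^{\alpha}\dW_s,\qquad 0\leqslant t\leqslant S,
\end{equation}
which, thanks to \ref{eq:generatorfandb:1} (resp. \ref{eq:generatorfandb':1}) and the strict discount $\alpha>0$, has a unique solution in the usual weighted spaces and admits a Markovian representation $Y_t^{\alpha}=v^{\alpha}(X_t)$, $Z_t^{\alpha}=\nabla_x v^{\alpha}(X_t)\Sigma$. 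After a preliminary mollification of $b$ and $f$ (removed at the end by stability of BSDE solutions), $v^{\alpha}$ is of class $C^1$ and I can differentiate the equation with respect to the initial condition.

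The hard part, and the real content of the statement, is a Lipschitz bound on $v^{\alpha}$ that is uniform in $\alpha$. Fixing a direction $\xi$ and writing $(U_s,V_s)=(\partial_\xi Y_s^{\alpha},\partial_\xi Z_s^{\alpha})$, the pair solves the linear BSDE with source $\nabla_x f(X_s,Z_s^{\alpha})\,\partial_\xi X_s$ and with a $V$-coefficient $\nabla_z f(X_s,Z_s^{\alpha})$ bounded by $\Kfz$. I would remove this coefficient by a Girsanov change of measure $\bQ$, under which $U_0=\bE^{\bQ}\bigl[\int_0^\infty e^{-\alpha s}\,\nabla_x f(X_s,Z_s^{\alpha})\,\partial_\xi X_s\,\ds\bigr]$. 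The key observation is that the derivative flow $\partial_\xi X_s$ of \eqref{eq:SDE} obeys a pathwise contraction independent of the measure: differentiating \ref{eq:generatorfandb:3} gives $\norm{\partial_\xi X_s}\leqslant e^{-\eta s}\norm{\xi}$ in the dissipative case, while in the Ornstein--Uhlenbeck case $\partial_\xi X_s=e^{-As}\xi$ is deterministic and bounded by $\CA e^{-as}\norm{\xi}$ thanks to \eqref{eq:bound:expA}. Combined with $\norm{\nabla_x f}\leqslant\Kfx$, this yields $\norm{\nabla_x v^{\alpha}}\leqslant \Kfx/\eta$ (resp. $\CA\Kfx/a$), uniformly in $\alpha$; in particular $\norm{Z_t^{\alpha}}\leqslant C$ uniformly, which gives the bound on $\bar u$.

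With the uniform Lipschitz estimate in hand I would pass to the limit $\alpha\to 0$. A standard moment bound for the ergodic process $X$ shows $\alpha\,v^{\alpha}(0)$ is bounded, so along a subsequence $\lambda:=\lim_{\alpha\to0}\alpha\,v^{\alpha}(0)$ exists; the uniform Lipschitz bound gives $\norm{\alpha v^{\alpha}(x)-\alpha v^{\alpha}(0)}\leqslant \alpha C\norm{x}\to0$, so $\lambda$ is the same for every starting point. The functions $x\mapsto v^{\alpha}(x)-v^{\alpha}(0)$ are equi-Lipschitz and vanish at the origin, hence by Arzel\`a--Ascoli converge (up to a subsequence) locally uniformly to some Lipschitz $u$ with $u(0)=0$ and $\norm{u(x)}\leqslant C\norm{x}$, while the gradients converge to define a bounded $\bar u=\nabla_x u\,\Sigma$. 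Feeding these convergences into the BSDE and using stability of solutions, $(u(X_\cdot),\bar u(X_\cdot),\lambda)$ solves \eqref{eq:EBSDE} and satisfies \eqref{eq:prop:existence:uniqueness:EBSDE:bounds}.

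For uniqueness, given two Markovian solutions with the prescribed growth I would set $\delta Y_t=u_1(X_t)-u_2(X_t)$ and $\delta Z_t=\bar u_1(X_t)-\bar u_2(X_t)$; linearizing $f$ in its $z$-argument (coefficient bounded by $\Kfz$) and removing it by Girsanov gives $\delta Y_0=\bE^{\bQ}[\delta Y_S]-(\lambda_1-\lambda_2)S$. The sublinear growth of $u_1-u_2$ together with uniform-in-$S$ moment bounds for $X$ under $\bQ$ (the Girsanov drift being bounded, it preserves the dissipativity/Hurwitz contraction up to a bounded perturbation) forces $\bE^{\bQ}[\delta Y_S]/S\to0$, whence $\lambda_1=\lambda_2$; the same representation read through the exponential ergodic coupling of $X$ under $\bQ$ then forces $\delta Y$ to be constant, i.e. $u_1-u_2$ constant and $\bar u_1=\bar u_2$. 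I expect the two delicate points to be the rigorous differentiation underpinning the uniform gradient bound (handled by mollification and stability) and the uniform moment/coupling control under the changed measure, which is used in both the limit and the uniqueness steps.
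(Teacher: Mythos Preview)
Your approach is essentially the vanishing-discount construction of \cite{Fuhrman-Hu-Tessitore-07}, which is exactly what the paper invokes (the paper's own proof is largely by citation to \cite{Fuhrman-Hu-Tessitore-07,Debussche-Hu-Tessitore-10,Hu-Madec-Richou-14}, together with the observation that under Assumptions~\ref{eq:generatorfandb'} the two key estimates---the pathwise flow contraction $|X^x_t-X^{x'}_t|\leqslant \CA e^{-at}|x-x'|$ and the uniform second-moment bound under bounded Girsanov drifts---still hold, which is all those references actually use from dissipativity). Your sketch of the uniform Lipschitz bound via the derivative flow and Girsanov, and of uniqueness via linearization, matches this route.

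One point deserves care. You write that after Arzel\`a--Ascoli ``the gradients converge to define a bounded $\bar u=\nabla_x u\,\Sigma$''. Arzel\`a--Ascoli on the equi-Lipschitz family $v^\alpha(\cdot)-v^\alpha(0)$ gives local uniform convergence of the functions, not of their gradients; a uniform $C^1$ bound alone does not force $\nabla_x v^\alpha$ to converge. The proposition as stated does not ask for $\bar u=\nabla_x u\,\Sigma$, only for a bounded measurable $\bar u$, and the paper explicitly defers the $C^1$ regularity of $u$ and the identification $\bar u=\nabla_x u\,\Sigma$ to Theorem~\ref{thm:representationBSDE:ergodic}. To stay within the proposition you should instead argue boundedness of $\bar u$ directly: your uniform bound $\|Z^\alpha\|\leqslant C$ survives the BSDE stability limit, and the Markovian representation of the limit $Z$ then yields a bounded $\bar u$ (equivalently, as the paper does, deduce $u$ Lipschitz and then apply standard finite-horizon BSDE estimates on $[0,1]$ with terminal condition $u(X_1)$ to bound $Z$).
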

The justification that $u\in C^1$ and that $\bar u(.)=\nabla_x u(.)\Sigma$ will be established later in Theorem \ref{thm:representationBSDE:ergodic}.

\begin{proof}
Let us start by assuming Assumptions \ref{eq:generatorfandb}. We refer to \cite{Debussche-Hu-Tessitore-10,Hu-Madec-Richou-14} for the proof of the existence and uniqueness result as well as the Markovian representation. The growth of $u$ comes from Theorem 4.4 in  \cite{Fuhrman-Hu-Tessitore-07}. To be precise, it is assumed in \cite{Fuhrman-Hu-Tessitore-07} that $f(.,0)$ is bounded. Nevertheless, as mentioned in the proof of Lemma 3.12 in \cite{Hu-Madec-Richou-14}, results stay true when $f(.,0)$ has a linear growth. Now, let us explain why $\bar{u}$ is bounded. By using the proof of Theorem 4.4 in  \cite{Fuhrman-Hu-Tessitore-07}, we have that $u$ is a Lipschitz function. Now we remark that $(Y_t,Z_t)_{t \in [0,1]}$ is the solution of the finite time horizon BSDE
$$Y_t = u(X_1) + \int_t^1 (f(X_s,Z_s) -\lambda) \ds -\int_t^1 Z_s \dW_s, \quad t \in [0,1].$$
Then, classical estimates on $Z$ gives us that $Z$ is bounded. 

Now we tackle the alternative Assumptions \ref{eq:generatorfandb'}. Up to our knowledge, this framework is not directly covered by published results on EBSDEs. Nevertheless, some standard computations show that for all $x,x' \in \mathbb{R}^d$, $t \geqslant 0$,
we have $\Xx_t- X^{x'}_t=e^{-At} (x-x')$ and then 
\begin{equation}
\label{eq:ineg1}
|\Xx_t- X^{x'}_t|\leqslant \CA e^{-at} |x-x'|.
\end{equation}
Moreover, for any $B>0$, there exists a constant $C_B>0$ such that, for any progressively measurable process $\beta$ bounded by $B$, we have
\begin{equation}
\label{eq:ineg2}
    \sup_{t \geqslant 0} \mathbb{E}^{\mathbb{Q}}[|X_t^x|^2] \leqslant C_B(1+|x|^2),
\end{equation}
where $\mathbb{Q}$ is the Girsanov change of probability associated to the process $\beta$, i.e. $\frac{d\mathbb{Q}}{d\mathbb{P}}$ is given by the Doléans-Dade exponential of $\beta$. Then, by checking all the proofs in \cite{Fuhrman-Hu-Tessitore-07,Debussche-Hu-Tessitore-10,Hu-Madec-Richou-14}, we can remark that Assumption \ref{eq:generatorfandb:3} is used only to prove estimates like \eqref{eq:ineg1} and \eqref{eq:ineg2}. Then, all results stay true when we replace Assumption \ref{eq:generatorfandb} by \ref{eq:generatorfandb'}. This fact was already highlighted in \cite{Guatteri-Tesitore-20}: see their assumption (A6) as well as explanations that follow.
\qed
\end{proof}}

{Next, the BSDE \eqref{eq:EBSDE} gives a probabilistic representation of the following elliptic PDE
\begin{equation}
\label{eq:PDEergodic}
\cL u(x) + f(x,\nabla_x u(x)\Sigma)=\lambda,
\end{equation}
where $\cL$ denotes  the generator of the semi-group associated to the SDE \eqref{eq:SDE}.

\begin{Proposition}
\label{prop:viscosity:EBSDE}
 Under assumptions of Proposition \ref{prop:existence:uniqueness:EBSDE}, $u$ given by \eqref{eq:repMarkEBSDE} is a viscosity solution of \eqref{eq:PDEergodic}.
\end{Proposition}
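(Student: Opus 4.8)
The plan is to show that the function $u$ from the Markovian representation \eqref{eq:repMarkEBSDE} is a viscosity solution of the ergodic PDE \eqref{eq:PDEergodic} by exploiting the dynamic programming structure inherited from the BSDE \eqref{eq:EBSDE}. The standard strategy, following the approach of Pardoux--Peng and its adaptations to the ergodic setting, is to pass from the BSDE to the PDE through a localization argument: on each finite interval $[t,t+h]$, the pair $(Y,Z)$ restricted to $[t,t+h]$ solves a finite-horizon BSDE with terminal condition $u(X_{t+h})$ and driver $f(X_s,Z_s)-\lambda$. The value function $u$ then satisfies, by uniqueness of the Markovian representation, a dynamic programming relation linking $u(X_t)$ and $u(X_{t+h})$. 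The ergodic cost $\lambda$ simply appears as an additive constant in the driver, so it carries through to the PDE as the right-hand side, and the nonlinearity $f(x,\nabla_x u(x)\Sigma)$ arises because $Z_s=\bar u(X_s)=\nabla_x u(X_s)\Sigma$.

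The key steps I would carry out, in order, are as follows. First, fix $x\in\bR^d$ and a test function $\phi\in C^2(\bR^d)$ such that $u-\phi$ has a local maximum at $x$ (for the subsolution property) with $u(x)=\phi(x)$. Second, start the forward SDE \eqref{eq:SDE} at $X_0=x$ and consider, for small $h>0$, the finite-horizon BSDE on $[0,h]$ with terminal datum $u(X_h)$; by the comparison principle for BSDEs together with the local-maximum property, I would compare the solution driven by $u$ against the one obtained by replacing $u$ with the smooth $\phi$. Third, apply It\^o's formula to $\phi(X_s)$ on $[0,h]$, which produces the generator $\cL\phi(x)$ plus the martingale term $\nabla_x\phi(X_s)\Sigma\,\dW_s$; matching this against the BSDE dynamics $-\dd Y_s = (f(X_s,Z_s)-\lambda)\ds - Z_s\dW_s$ and identifying the diffusion coefficients yields, in the limit $h\downarrow 0$, the inequality $\cL\phi(x)+f(x,\nabla_x\phi(x)\Sigma)\geqslant\lambda$. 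Fourth, divide by $h$ and let $h\to 0$, using the continuity of $f$, of $b$, and of the test function's derivatives together with the moment bounds \eqref{eq:ineg2} on $X$ to justify passing the expectation limit. The supersolution property follows by the symmetric argument with a local minimum.

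A cleaner alternative I would keep in reserve is to avoid the comparison principle entirely and instead invoke the standard equivalence between Markovian BSDE solutions and viscosity solutions of the associated semilinear PDE, as established in Pardoux--Peng type results: one shows that for each finite $T$ the truncated pair solves the finite-horizon semilinear PDE with the extra constant $-\lambda$, and then uses the time-homogeneity of the ergodic problem to conclude that the stationary function $u$ solves \eqref{eq:PDEergodic}. Since $u$ is already known from Proposition \ref{prop:existence:uniqueness:EBSDE} to be continuous (indeed Lipschitz) with the stated linear growth, the only genuine work is to transfer the known finite-horizon viscosity characterization to the infinite-horizon stationary equation, where the constant $\lambda$ plays the role of the ergodic eigenvalue.

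The main obstacle I anticipate is the rigorous justification of the limiting inequalities at the viscosity-test level, specifically controlling the $Z$-component: since $u$ need not be $C^1$ at this stage (its differentiability is only established later in Theorem \ref{thm:representationBSDE:ergodic}), one cannot directly write $Z_s=\nabla_x u(X_s)\Sigma$, so the identification of the first-order term $f(x,\nabla_x\phi(x)\Sigma)$ must be done carefully through the test function rather than through $u$ itself. The comparison/localization argument must therefore be set up so that the $Z$ variable enters only via the smooth test function in the limit, which is exactly what the viscosity formulation is designed to handle; getting the direction of the inequalities and the role of the local extremum correct is the delicate bookkeeping. The presence of $\lambda$ and the infinite-horizon nature add no real difficulty here, as they reduce to a constant shift once the problem is localized to $[0,h]$.
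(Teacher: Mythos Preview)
Your approach is valid but takes a genuinely different route from the paper. The paper argues by \emph{vanishing discount}: for each $\alpha>0$ it introduces the discounted infinite-horizon BSDE
\[
Y_t^{\alpha,x} = Y_T^{\alpha,x} + \int_t^T \big(f(X_s^x,Z_s^{\alpha,x}) - \alpha Y_s^{\alpha,x}\big)\ds - \int_t^T Z_s^{\alpha,x}\dW_s,
\]
observes that the recentred value function $\bar u^{\alpha}(x):=Y_0^{\alpha,x}-Y_0^{\alpha,0}$ is a viscosity solution of the discounted elliptic PDE $\cL u + f(x,\nabla_x u\,\Sigma)=\alpha u+\alpha u^{\alpha}(0)$ by a direct citation (Pardoux--Rascanu), and then passes to the limit along a sequence $\alpha_n\downarrow 0$ using the convergences $\bar u^{\alpha_n}\to u$ uniformly and $\alpha_n u^{\alpha_n}(0)\to\lambda$ (already established in the EBSDE literature) together with the stability theorem for viscosity solutions (Crandall--Ishii--Lions, Remark~6.3). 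Essentially everything reduces to three citations.

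Your direct localization argument on $[0,h]$ is the Pardoux--Peng style route and will work, but it is more labour-intensive: the delicate point you correctly flag---getting from $f(X_s,Z_s)$ with $Z_s=\bar u(X_s)$ to $f(x,\nabla_x\phi(x)\Sigma)$---cannot be handled by ``identifying the diffusion coefficients'' as you phrase it in step~3, since $Z_s$ is not $\nabla_x\phi(X_s)\Sigma$. One must introduce the auxiliary BSDE on $[0,\tau_h]$ with terminal datum $\phi(X_{\tau_h})$, compare it with the original via the BSDE comparison theorem, and then contrast its value at time~$0$ with $\phi(x)$ via It\^o on $\phi$; this is standard but has to be written out. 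The paper's vanishing-discount route sidesteps this bookkeeping entirely at the cost of relying on the prior construction of $u$ as a limit of $\bar u^{\alpha}$, which is how the EBSDE solution was built in the first place.
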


\begin{proof}
As in \cite{Fuhrman-Hu-Tessitore-07,Debussche-Hu-Tessitore-10}, we can consider, for all $\alpha>0$, the infinite horizon BSDE
$$Y_t^{\alpha,x} = Y_T^{\alpha,x} + \int_t^T \left(f(X_s^x,Z_s^{\alpha,x}) - \alpha Y_s^{\alpha,x} \right)\ds - \int_s^T Z^{\alpha,x}_s \dW_s,\quad 0 \leqslant t \leqslant T,$$
and define ${u}^{\alpha}(x) := Y_0^{\alpha,x}$, $\bar{u}^{\alpha}(x) := Y_0^{\alpha,x}-Y_0^{\alpha,0}$ for all $x\in \bR^{d}$. $\bar{u}^{\alpha}$ is Lipschitz continuous and has a uniform linear growth: there exists $C$ that does not depend on $\alpha$ such that  $|\bar{u}^{\alpha}(x)|\leqslant C(1+|x|)$ for all $x\in \bR^d$. To be precise, it is assumed in \cite{Fuhrman-Hu-Tessitore-07,Debussche-Hu-Tessitore-10} that $f(.,0)$ is bounded. Nevertheless, as mentioned in the proof of Lemma 3.12 in \cite{Hu-Madec-Richou-14}, results stay true when $f(.,0)$ has a linear growth.
By standard arguments, see e.g. proof of Theorem 5.74 in \cite{pard:rasc:14}, $\bar{u}^{\alpha}$ is a viscosity solution of the elliptic PDE
$$\cL u(x) + f(x,\nabla_x u(x)\Sigma)= \alpha u(x) + \alpha u^{\alpha}(0),\quad x \in \bR^d.$$
According to \cite{Fuhrman-Hu-Tessitore-07}, there exists a sequence $(\alpha_n)_{n \in \mathbb{N}}$ such that $\alpha_n \searrow 0$, $\alpha_n u^{\alpha_n}(0) \rightarrow \lambda$ and $\bar{u}^{\alpha_n} \rightarrow u$ uniformly on $\bR^{d}$ when $n \rightarrow + \infty$. Then Remark 6.3 in \cite{Crandall-Ishii-Lions-92} gives us that $u$ is a viscosity solution of \eqref{eq:PDEergodic}.\qed \end{proof}}

{\subsection{Time-randomized Feynman-Kac representation}
\label{sec:Feynman-Kac-representation}
Now, our goal is to obtain a representation of $v(x):=\nabla_x u(x)$ as an expectation of a functional involving $ v(.)$ and the process $X$. 

\paragraph{\it Heuristics.}
We first explain informally the derivation, without taking much  care of precise assumptions, for the sake of emphasising ideas.
Writing the ergodic BSDE \eqref{eq:EBSDE} with $X^x$ and using the Markovian representation {of Proposition}  \eqref{prop:existence:uniqueness:EBSDE}, we get
\begin{equation}
 \label{eq:EBSDE:markov}
 u(x) = \E{u(X_T^x) +\int_0^T  \left(f(X_s^x,\bar u(X_s^x))  -\lambda\right)\ds}.
\end{equation}
By informally differentiating the above with respect to $x$ and using the Malliavin calculus integration by parts formula  (see \cite[Exercise 2.3.5, p.142]{nual:06}) to avoid  differentiating the $f$ term and having a $\nabla_x \bar{u}$ term, we obtain
\begin{equation}
 \label{eq:EBSDEdiff}
 v(x) = \E{v(X_T^x)\nabla_x X_T^x +\int_0^T  U^x_s f(X_s^x,\bar u(X_s^x))  \ds}
\end{equation}
where $U_s^x$ is the (raw vector valued) Malliavin weight given by 
\begin{align}
U^x_s = \frac{1}{s}\left(\int_0^s  (\Sigma^{-1}\nabla_x X^x_r)^\top \dW_r\right)^\top.\label{eq:malliavin:weight}
\end{align}
If we assume for the moment that $\bar{u} = \nabla_x u \Sigma$, we can replace $\bar{u}$ in \eqref{eq:EBSDEdiff} by $v\Sigma$. At first sight, $v(.)$  solves a nice fixed-point equation  \eqref{eq:EBSDEdiff} where the right-hand side is an expectation of a functional of $v$, of the path of $X^x$ and of its tangent process $\nabla_x X^x$. But a careful inspection shows that the terms inside the expectation have likely exploding polynomial moments as $T$ goes to $+\infty$. To see this, consider the simplest case of Ornstein-Uhlenbeck process, in dimension $d=1$,  with $\Sigma=1$ and $b(x)=-a x$ for a scalar parameter $a>0$: then $\nabla_x X^x_t=e^{-at}$ and 
\begin{align}
U^x_s = \frac{1}{s}\int_0^s  e^{-ar} \dW_r\stackrel{d}=\cN\left(0,\frac{1-e^{-2a s}}{ 2a s^2}\right).
\end{align}
Taking a bounded $f$ gives an estimation of $\bL_p$-norm as follows:
\begin{align}
\normp{\int_0^T  U^x_s f(X_s^x,\bar u(X_s^x) )  \ds}&\leq \int_0^T \normp{ U^x_s f(X_s^x,\bar u(X_s^x) )  } \ds\\
&\leq \int_0^T C_p \sqrt{\frac{1-e^{-2a s}}{ 2a s^2}} \|f\|_\infty \ds:
\end{align}
the upper integral converges at $s=0$ but diverges at $s=T$ when $T=+\infty$. It shows that the \emph{usual} Malliavin weight does not lead to finite estimates as $T\to+\infty$, {which is a major flaw in the future perspective of using a Picard iteration\footnote{it would require that  $\Kfz$ be small enough, with more stringent conditions than those of this work, see e.g. Proposition \ref{prop:contraction}.}}.}

\paragraph{\it Solution and final derivation.} 
{Actually, there is no uniqueness of such weights $U^x_s$, it is known that they coincide up to conditional expectation given $X^x_s$, see \cite[Section 2.1]{four:lion:01}. 
To overcome the issue of integrability at $T=+\infty$, we follow a slightly different path, using the likelihood ratio method %\cite{glynn1987likelilood} 
\cite[Chapter VII, Section 3]{asmu:glyn:07}: denote by $p(0,x;s,.)$ the density of $X_s^x$, which exists owing to Girsanov arguments under the condition \ref{eq:generatorfandb:4}. Assuming for a while appropriate smoothness with respect to $x$, we get that the first derivative of the integral term in \eqref{eq:EBSDE:markov} equals 
\begin{align*}
\nabla_x \E{\int_0^T f(X_s^x,\bar{u}(X_s^x))\ds}
&=\int_0^T \int_{\bR^d} f(x',\bar{u}(x'))\nabla_x p(0,x;s,x') \dx'\ds\\
%&=\int_0^T  \int_{\bR^d} f(x',\bar{u}(x'))\nabla_x (\log(p(0,x;s,x')) )p(0,x;s,x')\dx'\ds\\
&=\E{\int_0^T  f(X_s^x,\bar{u}(X_s^x)) \bar{U}^x_s \ds}
\end{align*}
where 
\begin{align}\bar{U}^x_s=\nabla_x (\log(p(0,x;s,x'))\big|_{x'=X_s^x}.\label{eq:formula:barU}
\end{align}}
{In Theorem \ref{thm:representationBSDE:ergodic} below, we prove that 
\begin{align}
\mathbb{E}\left[\int_0^{\infty} \left|  \bar U^x_s f(X_s^x,v(X_s^x) \Sigma) \right| \ds\right]  <+\infty
\end{align}%\emm{fixer le choix de p dans cette ecriture }
%\emmi{j'aurais ajouté la norme dans l'intégrale et non à l'exterieur, pour ne pas avoir de doute sur Fubini}
as a difference with the previous weight $U^x_s$. Actually whether the above holds in full generality is, so far, an open question (because of untractable formulas for $\bar{U}^x_s$), however we have established the required property in the subclass of ergodic models described by multidimensional Ornstein-Uhlenbeck processes (Assumptions \ref{eq:generatorfandb'}). Standard computations from \eqref{eq:OU} show that 
\begin{equation}
\label{eq:OU:bis}
\Xx_t=e^{-At} x+e^{-At} \int_0^t e^{As}\Sigma\dd W_s,
\end{equation}
hence $\Xx_t$ is distributed as a Gaussian vector, with mean $e^{-At }x$ and covariance
\begin{align}
\label{eq:bound:def}
\Sigma_t &:= \int_0^t e^{A(s-t)}\Sigma\Sigma^\top e^{A^\top(s-t)}\ds=
 \int_0^t e^{- Ar }\Sigma\Sigma^\top e^{-A^\top r}\dr.\end{align}
The matrix $\Sigma_t$ clearly inherits from the invertibility of $\Sigma$ (condition \ref{eq:generatorfandb:4}); in addition we have the following estimates
\begin{align}
\label{eq:bound:Sigma}
\begin{split}
&\Sigma_t \underset{t\to 0^+}{\sim} t\ \Sigma \Sigma^\top,\qquad\qquad\qquad
\Sigma_t \underset{t\to +\infty}{\sim} \Sigma_\infty,\\
%\label{eq:bound:Sigma:2}
&\normM{\Sigma_t^{-1}} \leqslant C(1\lor t^{-1})%\qquad\qquad\qquad \forall t\geqslant 0
,\qquad\qquad
{\normM{\Sigma_t} \leqslant \CA^2\frac{ \normM{\Sigma \Sigma^\top}}{2a},}\qquad \forall t\geqslant 0,
\end{split}
\end{align}
where $\Sigma_\infty$ is the covariance matrix of the invariant law $\nu$ given by
\begin{align}\label{eq:invariant:nu}
\nu := \cN\left(0,\int_0^{+\infty} e^{-Ar}\Sigma \Sigma^\top e^{-A^\top r} \dr\right).
\end{align}}
{In view of \eqref{eq:formula:barU} and the above Gaussian distribution, we easily get 
\begin{align}
\label{eq:barU:tildeU}
\begin{cases}
  \bar{U}^x_s &=  {(\Xx_s-e^{-A s} x )^\top\Sigma_s^{-1} e^{-A s}}
  %\left(e^{-A^\top s}\Sigma_s^{-1} {(\Xx_s-e^{-A s} x )}\right)^\top
 %=e^{-A^\top s}\Sigma_s^{-1} \int_0^s e^{-A s} \Sigma \dW_r
  %&= -e^{-a s} e^{-(A-a \Id)^\top s}\Sigma_s^{-1} \int_0^s e^{-A s} \Sigma \dW_r\\
  =:   e^{-a s} \tilde U_s,\\[2mm]
% \end{align}
% where
% \begin{equation}
%\label{def:tildeU}
    \tilde{U}_s &= e^{as}{(\Xx_s-e^{-A s} x )^\top\Sigma_s^{-1} e^{-A s}}= e^{as}  \left(\int_0^s e^{Au}\Sigma\dd W_u\right)^{\top} (e^{-As})^{\top} \Sigma_s^{-1} e^{-A s}. 
    \end{cases}
\end{align}
Observe that $(\tilde{U}_s)_{s >0}$ does not depend on $x$. {
Moreover, owing to \eqref{eq:OU:bis} and  \eqref{eq:bound:Sigma}, there is a time-uniform constant $C$ such that
\begin{equation}
    \label{eq:bound:acc:X}
    \E{\norm{X_s^x-e^{-A s}x}^2}\leqslant C(s\wedge 1),
    \end{equation}
which in turn implies (again using \eqref{eq:bound:Sigma} and \eqref{eq:bound:expA}) the following upper-bound: 
\begin{equation}
    \label{eq:bound:esp:Utilde}
    \E{|\tilde{U}_s|^2} \leqslant C(1 \vee s^{-1}),\quad \forall s>0,
\end{equation}
for a  new time-uniform constant $C$.}
%\emmi{donner l'expression de tilde U sans x avec une ref. DONE avec ref def:tildeU}
This informal derivation leads to the next statement, which rigorous proof is postponed to Section \ref{proof:thm:representationBSDE:ergodic}. Note that in comparison with Proposition \ref{prop:existence:uniqueness:EBSDE}, the link between the value function $u$ and its supposedly derivative $\bar u$ is established, as well their continuities.}
%\emmi{sign and transpose changed, formula to recheck just to be sure. }
{\begin{Theorem}
 \label{thm:representationBSDE:ergodic}
 Let us assume that Assumptions \ref{eq:generatorfandb'} are in force. 
 Then 
   \begin{enumerate} 
 \item \label{thm:representationBSDE:ergodic:1} $u \in C^1(\bR^d)$,
 \item \label{thm:representationBSDE:ergodic:2} $Z_t = v(X_t)\Sigma$ with $v:=\nabla_x u$, $\|v\|_{\infty}<+\infty$,
 \item \label{thm:representationBSDE:ergodic:3} the gradient $v$ is solution of the four following equations
 \begin{align}
 \label{eq:representationZ:finiteT}
 v(x) &= \E{ v(X^x_T)e^{-AT}+ \int_0^{T} e^{-as}\tilde{U}_s f(\Xx_s,{v(\Xx_s) \Sigma})\ds} \\
% &= \frac{\sqrt{\pi}}{\sqrt{\theta}}\E{\sqrt{E}e^{-(a-\theta)E}\tilde{U}_E f(\Xx_E,{v(\Xx_E) \Sigma})},
 \label{eq:representationZ:infiniteT}
 &=  \E{ \int_0^{+\infty} e^{-as}\tilde{U}_s f(\Xx_s,{v(\Xx_s) \Sigma})\ds} \\
  \label{eq:representationZ:finiteT:randomise}
   &= \E{v(X^x_T)e^{-AT}+\1_{G\leq T \theta }\frac{\sqrt{\pi}}{\theta}\sqrt{G}e^{-(\frac a\theta-1)G}\tilde{U}_{\frac{G}\theta} f\left(\Xx_{\frac{G}\theta},{v(\Xx_{\frac{G}\theta}) \Sigma}\right)}\\
 &= \frac{\sqrt{\pi}}{\theta}\E{\sqrt{G}e^{-(\frac a\theta-1)G}\tilde{U}_{\frac{G}\theta} f\left(\Xx_{\frac{G}\theta},{v(\Xx_{\frac{G}\theta}) \Sigma}\right)}
 \label{eq:representationZ:infiniteT:randomise}
\end{align}
where {$\tilde{U}_s$ given by \eqref{eq:barU:tildeU} satisfies \eqref{eq:bound:esp:Utilde}},
$\theta \in (0,a)$ and $G\overset d=\cG(1/2,1)$ is independent of $W$.
  \end{enumerate}
  \end{Theorem}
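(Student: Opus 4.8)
The plan is to turn the heuristic derivation of Subsection~\ref{sec:Feynman-Kac-representation} into a rigorous argument, the essential point being that for the Ornstein--Uhlenbeck flow the map $x\mapsto\E{\phi(\Xx_s)}$ can be differentiated by transferring the derivative onto the (smooth, and $x$-dependent only through its mean $e^{-As}x$) Gaussian kernel rather than onto $\phi$. I would proceed in four steps. \emph{Step 1 ($u\in C^1$).} Writing $\E{\phi(\Xx_s)}=\int_{\bR^d}\phi(y)\rho_s(y-e^{-As}x)\,\dy$ with $\rho_s$ the $\cN(0,\Sigma_s)$ density and differentiating the kernel, I obtain for every $\phi$ of at most linear growth
\[
\nabla_x\E{\phi(\Xx_s)}=\E{\phi(\Xx_s)\,\bar U^x_s},\qquad \bar U^x_s=e^{-as}\tilde U_s,
\]
with $\bar U^x_s$ as in \eqref{eq:barU:tildeU}--\eqref{def:tildeU}. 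Applying this in \eqref{eq:EBSDE:markov} to $\phi=u$ at time $T$ and to $\phi=f(\cdot,\bar u(\cdot))$ under the integral (both of linear growth, since $\bar u$ is bounded and $f$ is Lipschitz), and using $\E{\bar U^x_s}=0$ to cancel the constant $\lambda$, I get that $u\in C^1$ with
\[
v(x)=\nabla_x u(x)=\E{u(\Xx_T)\,\bar U^x_T}+\int_0^T\E{f(\Xx_s,\bar u(\Xx_s))\,\bar U^x_s}\,\ds .
\]
The interchange of $\nabla_x$, $\E{\cdot}$ and $\int_0^T$ is justified by Cauchy--Schwarz, the bound \eqref{eq:bound:esp:Utilde} (giving $\normtwo{\bar U^x_s}\le C e^{-as}(1\vee s^{-1/2})$) and the moment bound \eqref{eq:ineg2} (giving $\normtwo{f(\Xx_s,\bar u(\Xx_s))}\le C(1+\norm{x})$): the integrand is dominated, locally uniformly in $x$, by $C(1+\norm{x})e^{-as}(1\vee s^{-1/2})\in L^1(\bR^+)$, the $s^{-1/2}$ singularity at the origin being integrable in time.

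\emph{Step 2 ($Z_t=v(X_t)\Sigma$, $\|v\|_\infty<\infty$, and \eqref{eq:representationZ:finiteT}).} On any finite horizon $(Y,Z)$ solves a Lipschitz BSDE, hence is Malliavin differentiable with $Z_t=D_tY_t$; since $Y_t=u(X_t)$ with $u\in C^1$ Lipschitz and $D_tX_t=\Sigma$ for the OU flow, the Malliavin chain rule gives $Z_t=\nabla_xu(X_t)\Sigma=v(X_t)\Sigma$, i.e. $\bar u=v\Sigma$. As $\bar u$ is bounded (Proposition~\ref{prop:existence:uniqueness:EBSDE}) and $\Sigma$ is invertible, $v=\bar u\,\Sigma^{-1}$ is bounded. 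Substituting $\bar u=v\Sigma$ and $\bar U^x_s=e^{-as}\tilde U_s$ in the Step~1 formula, and rewriting the terminal term by direct differentiation (now licit since $v=\nabla_xu$ is bounded) as $\nabla_x\E{u(\Xx_T)}=\E{v(\Xx_T)\,\nabla_x\Xx_T}=\E{v(\Xx_T)e^{-AT}}$, yields exactly \eqref{eq:representationZ:finiteT}.

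\emph{Step 3 (infinite horizon and time randomization).} Letting $T\to\infty$ in \eqref{eq:representationZ:finiteT}, the boundary term vanishes because $\|v\|_\infty<\infty$ and $\normM{e^{-AT}}\le\CA e^{-aT}\to0$ by \eqref{eq:bound:expA}, while the integrand is dominated by $C(1+\norm{x})e^{-as}(1\vee s^{-1/2})\in L^1(\bR^+)$ as in Step~1; dominated convergence then gives \eqref{eq:representationZ:infiniteT}. For the randomized forms, let $G$ be a $\Gamma(1/2,1)$ variable (density $\tfrac1{\sqrt\pi}g^{-1/2}e^{-g}$, using $\Gamma(\tfrac12)=\sqrt\pi$) independent of $W$, and $\theta\in(0,a)$. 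By Fubini (licensed by the absolute convergence above) together with the change of variables $g=\theta s$,
\[
\frac{\sqrt{\pi}}{\theta}\,\E{\sqrt{G}\,e^{-(\frac{a}{\theta}-1)G}\,\tilde U_{G/\theta}\,f\!\left(\Xx_{G/\theta},v(\Xx_{G/\theta})\Sigma\right)}=\int_0^{\infty}e^{-as}\,\E{\tilde U_s f(\Xx_s,v(\Xx_s)\Sigma)}\,\ds=v(x),
\]
which is \eqref{eq:representationZ:infiniteT:randomise}; the constraint $\theta<a$ is precisely what makes $e^{-(\frac a\theta-1)G}$ integrable and the expectation finite. Inserting $\1_{G\le T\theta}$ (equivalently $s\le T$) and restoring the boundary term $\E{v(\Xx_T)e^{-AT}}$ gives \eqref{eq:representationZ:finiteT:randomise}.

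The hard part is Steps~1--2: rigorously justifying differentiation under the expectation and under the time integral in the presence of the weight's singularity at $s=0$ (which is only $L^1$, not $L^2$, in the time variable), and cleanly identifying $\bar u=v\Sigma$ through the representation $Z=DY$, so that all four equations genuinely close on $v$ alone. Once these two points are secured, the infinite-horizon limit and the randomization are routine applications of dominated convergence, Fubini and a change of variables.
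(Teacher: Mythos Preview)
Your proposal is correct and follows essentially the same route as the paper. The paper differentiates the transition density $p(0,x;s,y)$ directly (bounding $|\nabla_x p|$ uniformly on compacts) rather than phrasing it via Cauchy--Schwarz, writes the terminal term as $\E{\nabla_x u(\Xx_T)e^{-AT}}$ from the outset instead of first using the likelihood weight and then switching, and for item~(2) cites \cite[Theorem 3.1]{ma:zhan:02:1} instead of invoking the Malliavin representation $Z_t=D_tY_t$ with the chain rule; these are presentational variants of the same argument, and the randomization and $T\to\infty$ steps are identical.
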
}
  
  {\begin{Remark}[Application to the approximation of BSDE in large horizon]
  %\emmi{new paragraph}
  As recalled in introduction, a BSDE with driver independent of $Y$, such as \eqref{eq:BSDEfinitehorizon}, can be well approximated, as the horizon $T$ is large, by an EBSDE with the formula
  \begin{align}
  \label{eq:expansion:BSDE:long:H}
Y_0^{T,x} \approx \lambda T + Y_0^{x} + L
\end{align} 
with the error bound \eqref{eq:BSDEfinitehorizon:erreur}. Since $Y_0^{x}$ is defined {up} to a constant, the constant $L$ depends implicitly on the choice of the constant for $Y_0^{x}$. Once $v$ is obtained from Theorem \ref{thm:representationBSDE:ergodic}, one can deduce \begin{equation}
\label{eq:lambda}
 \lambda = \int_{\bR^d} f(x,{v(x) \Sigma})\nu(\dx)
\end{equation}
with $\nu$ the invariant probability measure \eqref{eq:invariant:nu}: indeed, we just have to integrate \eqref{eq:EBSDE:markov} with respect to $\nu$ and  apply Fubini theorem. Second, since $u$ is the antiderivative of $v$ up to constant, we can set
\begin{equation}
 \label{eq:v:integrated}
 Y_0^{x} = u(x) = \int_0^1 v(tx)x \dt,\quad \forall x \in \bR^d.
\end{equation} These arguments set the first two terms on the right hand side of \eqref{eq:expansion:BSDE:long:H}. 
The tuning of $L$ is more delicate. In view of \eqref{eq:BSDEfinitehorizon:erreur} and since we take $Y_0^{x=0}=0$, we have
$$L=\lim_{T\rightarrow +\infty} (Y_0^{T,x=0}-\lambda T),$$
with an exponential convergence.  A naive approach would consist in estimating $Y_0^{T,x=0}$ (using a usual numerical method for BSDE) for a few $T$, to get an estimation of $L$. The experiments related of this approach are postponed to further research.
\end{Remark}}

{\begin{Remark}
Let us remark that the invertibility of $\Sigma$ is not necessary to get the existence and uniqueness result of Proposition \ref{prop:existence:uniqueness:EBSDE}, see e.g. \cite{Fuhrman-Hu-Tessitore-07}.
Moreover, it is well known that the invertibility of $\Sigma$ is not necessary to get the invertibility of $\Sigma_t$ for all $t>0$. Indeed, Kalman condition on $A$ and $\Sigma$, i.e.
 \begin{equation*}
  \label{hyp:Kalman}
  \text{the matrix } (\Sigma | A\Sigma |...|A^{d-1}\Sigma) \text{ has rank } d,
 \end{equation*}
 is equivalent to the invertibility of $\Sigma_t$ for all $t>0$ (see \cite[Proposition 6.5, Chapter 5]{kara:shre:91}). Under this weaker assumption, $\tilde{U}_t$ remains well-defined for all $t>0$. Nevertheless, if $\Sigma$ is not invertible, its time singularity close to $0^+$ is too strong and $(\mathbb{E}[|U_t^x|])_{t \in (0,1]}$ becomes not integrable. Then, the invertibility of $\Sigma$ becomes necessary to get the Feynman-Kac representation of $v$ in Theorem \ref{thm:representationBSDE:ergodic}. 
\end{Remark}}

\subsection{Contraction properties of the Fixed point equation}
\label{subsec:contraction}
We have established that $v$ solves equations \eqref{eq:representationZ:finiteT:randomise} and \eqref{eq:representationZ:infiniteT:randomise} that can be seen as some fixed point equations.
%Moreover, since we have the bound $\|v\|_{\infty}<\infty$, we can replace $v(.)$ by $\lfloor v(.) \rfloor_{\|v\|_{\infty}}$ inside the generator $f$ in equations \eqref{eq:representationZ:finiteT:randomise} and \eqref{eq:representationZ:infiniteT:randomise}. 

More precisely, we define for all $T \in \mathbb{R}^+ \cup \{+\infty\}$, a map $\Phi_T$ such that, for all measurable function $w : \mathbb{R}^d \rightarrow \mathbb{R}^{1 \times d}$, $\Phi_T(w)$ is a measurable function from $\mathbb{R}^d$ to $\mathbb{R}^{1 \times d}$ given by, for all $x \in \mathbb{R}^d$,
\begin{equation}
\label{deq:def:PhiT}
\Phi_T(w)(x)= \E{w(X^x_T)e^{-AT}\1_{T<+\infty}+\1_{G\leq T \theta }\frac{\sqrt{\pi}}{\theta}\sqrt{G}e^{-(\frac a\theta-1)G}\tilde{U}_{\frac{G}\theta} f\left(\Xx_{\frac{G}\theta},{ w(\Xx_{\frac{G}\theta}) \Sigma}\right)}.
\end{equation}
Then, equations \eqref{eq:representationZ:finiteT:randomise} and \eqref{eq:representationZ:infiniteT:randomise} %adding that $\lfloor v \rfloor_{\|v\|_{\infty}}=v$, 
rewrite as 
$$\Phi_T(v) = v, \quad \forall \,\, T \in \mathbb{R}^+ \cup \{+\infty\}.$$
As a preparation to discuss numerical approximation schemes, we study the contraction property of $\Phi_T$. In order to do it, we firstly have to set a suitable norm on the space $C^0(\mathbb{R}^d,\mathbb{R}^{1 \times d})$. Let $\rho:\bR^d\mapsto [1,+\infty)$ a positive weight function. We assume that $\rho$ is an increasing function with respect to $|x|$ with a growth at most exponential and at least affine: there exists $C>0$ such that
\begin{equation}
    \label{eq:growth:rho}
    1+C^{-1}|x| \leqslant \rho(x) \leqslant Ce^{C|x|}, \quad \forall \, x \in \mathbb{R}^d.
\end{equation}
Some standard choices will correspond to polynomial or exponential weighting, i.e. $\rho(x)=(1+\alpha|x|)^\beta$ or $\rho(x)=\exp(\alpha |x|)$ for some parameter $\alpha > 0$ and $\beta\geqslant 1$. The $\rho$-norm of a function $u:\bR^d\mapsto \bR^{1\times d}$ is defined by
\begin{align}
\label{eq:rho:norm}
\normr{u}=\sup_{x\in \bR^d}\frac{|u(x)|}{\rho(x) },
\end{align}
and we denote $C^0_{\rho}(\mathbb{R}^d,\mathbb{R}^{1\times d})$ the Banach space of functions $w \in C^0(\mathbb{R}^d,\mathbb{R}^{1 \times d})$ such that $\normr{w}<+\infty$.
%\ar{Maybe put a part of previous stuff on $\rho$ in the notation Section.}

\begin{Proposition} 
\label{pr:Lpestimate}
Let us assume that Assumption \ref{eq:generatorfandb'} is fulfilled and 
\begin{align}
\label{eq:condition:normA:rho:1}
c_{T,\eqref{eq:condition:normA:rho:1}}&:=\sup_{x\in \bR^d}\E{\frac{\rho(X_T^x)}{\rho(x)}\1_{T<+\infty}}<+\infty,\\
\label{eq:condition:normA:rho:2}
c_{T,\eqref{eq:condition:normA:rho:2}}&:=\int_0^T e^{-as}\sup_{x\in \bR^d}\E{|\Sigma_s^{-1}(X_s^x-e^{-As} x)|\frac{\rho(X_s^x)}{\rho(x)}}\ds <+\infty.
\end{align}
Then, for any functions $w_1,w_2 \in C^0_{\rho}(\mathbb{R}^d,\mathbb{R}^{1\times d})$, we have $\Phi_T(w_1),\Phi_T(w_2) \in C^0_{\rho}(\mathbb{R}^d,\mathbb{R}^{1\times d})$ and 
$$\normr{\Phi_T(w_1) -\Phi_T(w_2)} \leqslant \kappa_T \normr{w_1-w_2},$$
with
\begin{equation}
    \label{eq:def:kappa}
    \kappa_T := \CA e^{-aT} c_{T,\eqref{eq:condition:normA:rho:1}}\1_{T<+\infty} + \CA K_{f,z} \|\Sigma\| c_{T,\eqref{eq:condition:normA:rho:2}}.
\end{equation}
In particular, if $\kappa_T<1$, $v^0 \in C^0_{\rho}(\mathbb{R}^d,\mathbb{R}^{1\times d})$ and $v^{n+1} = \Phi_T(v^n)$ for all $n\in \mathbb{N}$, then $v$ is the unique fixed point of $\Phi_T$ and it satisfies 
\begin{equation}
    \normr{v^n-v} \leqslant (\kappa_T)^n \normr{v^0 -v}.
\end{equation}
\end{Proposition}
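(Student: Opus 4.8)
The plan is to prove the Lipschitz estimate pointwise in $x$, by splitting $\Phi_T(w_1)(x)-\Phi_T(w_2)(x)$ according to the two contributions in the definition \eqref{deq:def:PhiT}, and then to pass to the supremum defining the $\rho$-norm; the Picard statement will follow from the Banach fixed point theorem once $\Phi_T$ is shown to map $C^0_\rho$ into itself. First I would fix $w_1,w_2\in C^0_\rho$ and write
$$\Phi_T(w_1)(x)-\Phi_T(w_2)(x)=\E{(w_1-w_2)(\Xx_T)e^{-AT}\1_{T<+\infty}}+R(x),$$
where $R(x)$ gathers the randomized integral term. The decisive preliminary is a \emph{de-randomization} identity: since $G\overset d=\Gamma(1/2,1)$ is independent of $W$, conditioning on $W$ and integrating the explicit density $g\mapsto \pi^{-1/2}g^{-1/2}e^{-g}$ over $g\in(0,T\theta)$ gives, for any process $H$ independent of $G$ with $\E{\int_0^T e^{-as}\norm{H_s}\ds}<+\infty$,
$$\E{\1_{G\leq T\theta}\frac{\sqrt\pi}{\theta}\sqrt{G}\,e^{-(\frac a\theta-1)G}H_{G/\theta}}=\E{\int_0^T e^{-as}H_s\,\ds};$$
the weight is engineered precisely so that the factors $\sqrt G$, $e^{-(\frac a\theta-1)G}$ and $\sqrt\pi/\theta$ cancel the $\Gamma(1/2,1)$ density and leave $e^{-as}\ds$ after the substitution $s=g/\theta$. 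Applying this with $H_s=\tilde U_s\big(f(\Xx_s,w_1(\Xx_s)\Sigma)-f(\Xx_s,w_2(\Xx_s)\Sigma)\big)$ rewrites $R(x)$ as the deterministic-time integral matching \eqref{eq:representationZ:finiteT}.

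For the terminal term I would use $\normM{e^{-AT}}\leqslant \CA e^{-aT}$ (from \eqref{eq:bound:expA}) together with $\norm{(w_1-w_2)(\Xx_T)}\leqslant \normr{w_1-w_2}\,\rho(\Xx_T)$; dividing by $\rho(x)$ and taking the supremum yields the first summand $\CA e^{-aT}c_{T,\eqref{eq:condition:normA:rho:1}}\1_{T<+\infty}$ of $\kappa_T$. For the integral term I would combine the Lipschitz bound \ref{eq:generatorfandb':1} in the $z$-variable,
$$\norm{f(\Xx_s,w_1(\Xx_s)\Sigma)-f(\Xx_s,w_2(\Xx_s)\Sigma)}\leqslant \Kfz\normM{\Sigma}\,\normr{w_1-w_2}\,\rho(\Xx_s),$$
with the key pointwise bound on the Malliavin weight. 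Writing $\tilde U_s=e^{as}(\Xx_s-e^{-As}x)^\top\Sigma_s^{-1}e^{-As}$ and using that $\Sigma_s$ is symmetric, one gets $\norm{\tilde U_s}\leqslant e^{as}\normM{e^{-As}}\,\norm{\Sigma_s^{-1}(\Xx_s-e^{-As}x)}\leqslant \CA\,\norm{\Sigma_s^{-1}(\Xx_s-e^{-As}x)}$, again by \eqref{eq:bound:expA}. Inserting these into $R(x)$, dividing by $\rho(x)$, exchanging expectation and the time integral by Tonelli, and taking the supremum in $x$ produces the second summand $\CA \Kfz\normM{\Sigma}\,c_{T,\eqref{eq:condition:normA:rho:2}}$; summing the two gives the contraction with constant $\kappa_T$.

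To see that $\Phi_T$ preserves $C^0_\rho$ I would write $\Phi_T(w)=\big(\Phi_T(w)-\Phi_T(0)\big)+\Phi_T(0)$: the first part has $\rho$-norm at most $\kappa_T\normr{w}<+\infty$ by the estimate just established, while $\normr{\Phi_T(0)}<+\infty$ follows from the same computation, now using the linear growth $\norm{f(y,0)}\leqslant C(1+\norm{y})\leqslant C\rho(y)$ coming from \ref{eq:generatorfandb':1}. Continuity of $\Phi_T(w)$ in $x$ follows from the continuity of $x\mapsto \Xx_s$ and dominated convergence, the dominating function being integrable thanks to \eqref{eq:condition:normA:rho:1}--\eqref{eq:condition:normA:rho:2}. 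Finally, since $\normr{\cdot}$ makes $C^0_\rho$ a Banach space and $\kappa_T<1$, the Banach fixed point theorem gives a unique fixed point and the bound $\normr{v^n-v}\leqslant(\kappa_T)^n\normr{v^0-v}$; we already know from Theorem \ref{thm:representationBSDE:ergodic} that $v$ is such a fixed point and lies in $C^0_\rho$ (because $\|v\|_\infty<+\infty$ and $\rho\geqslant 1$), so it is \emph{the} fixed point. The main obstacle is the de-randomization identity and the attendant matrix-norm bookkeeping that matches $\norm{\tilde U_s}$ to the quantity integrated in $c_{T,\eqref{eq:condition:normA:rho:2}}$; everything else is routine once those are in place.
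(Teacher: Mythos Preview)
Your argument is correct and mirrors the paper's proof: both de-randomize the $G$-expectation back to the time integral \eqref{eq:representationZ:finiteT}, bound the terminal piece via $\normM{e^{-AT}}\leqslant \CA e^{-aT}$ and $\norm{(w_1-w_2)(\Xx_T)}\leqslant\normr{w_1-w_2}\rho(\Xx_T)$, bound the integral piece via the $z$-Lipschitz constant of $f$ and $\norm{\tilde U_s}\leqslant \CA\norm{\Sigma_s^{-1}(\Xx_s-e^{-As}x)}$, and conclude by Banach. The only cosmetic differences are that you make the de-randomization identity explicit (the paper switches silently to the integral form) and that you show $\Phi_T(w)\in C^0_\rho$ via $\Phi_T(w)-\Phi_T(0)+\Phi_T(0)$ whereas the paper bounds $\|\Phi_T(w)(x)\|/\rho(x)$ directly; one small caveat is that for the continuity step the dominating function really comes from the OU structure (namely that $X^x_s-e^{-As}x$ is a fixed Gaussian independent of $x$, combined with the growth bound \eqref{eq:growth:rho}) rather than from the finiteness of $c_{T,\eqref{eq:condition:normA:rho:1}}$--$c_{T,\eqref{eq:condition:normA:rho:2}}$ alone, since a supremum of expectations does not by itself yield a pathwise dominant.
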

The proof of Proposition \ref{pr:Lpestimate} is postponed to Section \ref{proof:pr:Lpestimate}.
%\begin{Remark}
%    \label{rem:projection}
%    By checking the proof of Proposition \ref{prop:contraction}, we easily get that this proposition is still valid if $\lfloor . \rfloor_{\|v \|_\infty}$ inside $f$ is replaced by $\lfloor . \rfloor_{R}$ for any $R \in [\|v\|_{\infty},+\infty]$. So, the introduction of this projection is not useful at this stage, but will be helpful for the study of our numerical scheme in Section \ref{sec:numericalscheme}.
%\end{Remark}

Now we can {specify a little bit the condition  $\kappa_T<1$ according to the values of the constants
$c_{T,\eqref{eq:condition:normA:rho:1}}$,  $c_{T,\eqref{eq:condition:normA:rho:2}}$, $\CA$,  $c_{1,\eqref{eq:constant:growth:sigma-1}}$ and $c_{2,\eqref{eq:constant:growth:sigma-1}}$ that satisfy}
%\emm{ajustement de sens, ancienne version en commentaire}
%$c_{T,\eqref{eq:condition:normA:rho:1}}$ and $c_{T,\eqref{eq:condition:normA:rho:2}}$ according to the value of $\CA$ and some constants $c_{1,\eqref{eq:constant:growth:sigma-1}}$ and $c_{2,\eqref{eq:constant:growth:sigma-1}}$ that satisfy
\begin{align}
    \label{eq:constant:growth:sigma-1}
    \|\Sigma_s^{-1}\|^{1/2} \leqslant c_{1,\eqref{eq:constant:growth:sigma-1}} + \frac{c_{2,\eqref{eq:constant:growth:sigma-1}}}{\sqrt{s}}, \quad \forall s>0.
\end{align}
Let us remark that the existence of these constants comes from \eqref{eq:bound:Sigma}.
\begin{Proposition}
\label{prop:contraction}

\begin{itemize}
    \item If $\CA=1$ and $\rho(x) = e^{\alpha |x|}$ with $\alpha>0$, then 
    %\emmi{pourquoi prendre $\CA=1$? cela simplifie juste le facteur exterieur (1 au lieu de $\CA$)}
%    {\color{lightgray}\begin{align}
%        \inf_{T \in \mathbb{R}^+ \cup\{+\infty\}}\kappa_T \leqslant K_{f,z} \|\Sigma\| \sqrt{d}\left(2e^{\frac{\alpha^2 \|\Sigma \Sigma^\top\|}{2a^2}}F\left(\frac{\alpha \|\Sigma \Sigma^\top\|^{1/2}}{a}\right)\right)^{d/2}  \left(\frac{c_{1,\eqref{eq:constant:growth:sigma-1}}}{a} + \frac{\sqrt{\pi}c_{2,\eqref{eq:constant:growth:sigma-1}}}{\sqrt{a}}\right)
%    \end{align}}
    \begin{align}
        \inf_{T \in \mathbb{R}^+ \cup\{+\infty\}}\kappa_T \leqslant K_{f,z} \|\Sigma\| \sqrt{d}\left(2e^{{\frac{\alpha^2 \|\Sigma \Sigma^\top\|}{a^2}}}F\left(\frac{{\sqrt 2}\alpha \|\Sigma \Sigma^\top\|^{1/2}}{{\sqrt a}}\right)\right)^{d/2}  \left(\frac{c_{1,\eqref{eq:constant:growth:sigma-1}}}{a} + \frac{\sqrt{\pi}c_{2,\eqref{eq:constant:growth:sigma-1}}}{\sqrt{a}}\right)
    \end{align}
    and this upper-bound is an upper-bound for $\kappa_{\infty}$. In particular, $\kappa_{\infty}<1$ as soon as $\alpha$ is small enough and 
    $$K_{f,z} \|\Sigma\| \sqrt{d} \left(\frac{c_{1,\eqref{eq:constant:growth:sigma-1}}}{a} + \frac{\sqrt{\pi}c_{2,\eqref{eq:constant:growth:sigma-1}}}{\sqrt{a}}\right)<1.$$
    \item If {$\CA \geqslant 1$} and $\rho(x) = (1+\alpha|x|)^\beta$ with $\beta \geqslant 1$ and $\alpha>0$, then 
    \begin{align}
        \inf_{T \in \mathbb{R}^+ \cup\{+\infty\}} \kappa_T \leqslant {\CA} K_{f,z} \|\Sigma\| \sqrt{d}\E{\left(\CA+\alpha \CA\left(\frac{\|\Sigma \Sigma^\top\|}{2a}\right)^{1/2} |Y|\right)^{2\beta}}^{1/2} \left(\frac{c_{1,\eqref{eq:constant:growth:sigma-1}}}{a} + \frac{\sqrt{\pi}c_{2,\eqref{eq:constant:growth:sigma-1}}}{\sqrt{a}}\right)
    \end{align}
    where $Y \sim \mathcal{N}(0,I_d)$. Moreover, this upper-bound is an upper-bound for $\kappa_{\infty}$. In particular, $\kappa_{\infty}<1$ as soon as $\alpha$ is small enough and 
    $${\CA} K_{f,z} \|\Sigma\| \sqrt{d} (\CA)^{\beta} \left(\frac{c_{1,\eqref{eq:constant:growth:sigma-1}}}{a} + \frac{\sqrt{\pi}c_{2,\eqref{eq:constant:growth:sigma-1}}}{\sqrt{a}}\right)<1.$$
\end{itemize}%\emmi{facteurs corrigés}

\begin{Remark}
Let us emphasize that the choice of $\rho$ will have an impact on the numerical error of our scheme. In particular, as noticed in Remark \ref{rem:comments-num-error}, we should consider a weight with the largest possible growth in order to minimize the spatial truncation error. Proposition \ref{prop:contraction} says that it is possible to consider an exponential weight when $C_A=1$. On the other hand, if $C_A>1$ we have to settle for a polynomial growth.
\end{Remark}

\end{Proposition}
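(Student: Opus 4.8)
The plan is to observe that the infimum over $T \in \bR^+\cup\{+\infty\}$ is dominated by the value at $T=+\infty$, where the indicator $\1_{T<+\infty}$ in \eqref{eq:def:kappa} kills the first term. Thus it suffices to bound
\[
\kappa_\infty = \CA K_{f,z}\normM{\Sigma}\, c_{\infty,\eqref{eq:condition:normA:rho:2}}, \qquad c_{\infty,\eqref{eq:condition:normA:rho:2}}=\int_0^{+\infty} e^{-as}\sup_{x\in\bR^d}\E{\norm{\Sigma_s^{-1}(X_s^x-e^{-As}x)}\tfrac{\rho(X_s^x)}{\rho(x)}}\ds,
\]
since this simultaneously yields the upper bound on $\inf_T \kappa_T$ (because $\inf_T\kappa_T\le \kappa_\infty$) and the assertion that the bound also controls $\kappa_\infty$. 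So everything reduces to estimating $c_{\infty,\eqref{eq:condition:normA:rho:2}}$ in the two weight regimes.

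\textbf{Gaussian representation and Cauchy--Schwarz.} By \eqref{eq:OU:bis} the increment $X_s^x-e^{-As}x$ is centered Gaussian with covariance $\Sigma_s$, so I write it as $\Sigma_s^{1/2}Y$ with $Y\sim\cN(0,I_d)$, the same $Y$ feeding both factors. Cauchy--Schwarz then splits the integrand into $\E{\norm{\Sigma_s^{-1}(X_s^x-e^{-As}x)}^2}^{1/2}\cdot\E{(\rho(X_s^x)/\rho(x))^2}^{1/2}$. For the first factor, $\Sigma_s^{-1}(X_s^x-e^{-As}x)=\Sigma_s^{-1/2}Y$, so its squared $\bL_2$-norm is $\Tr\Sigma_s^{-1}\le d\,\normM{\Sigma_s^{-1}}$, producing the $\sqrt d$ of the statement together with a factor $\normM{\Sigma_s^{-1}}^{1/2}$ to be integrated against $e^{-as}$. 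The crux is the second factor $M:=\sup_{s>0}\sup_{x}\E{(\rho(X_s^x)/\rho(x))^2}^{1/2}$, which I must bound \emph{uniformly in $s$ and $x$} using $\norm{e^{-As}x}\le \CA e^{-as}\norm{x}$ from \eqref{eq:bound:expA} and the variance bound $\normM{\Sigma_s}\le\CA^2\normM{\Sigma\Sigma^\top}/(2a)$ from \eqref{eq:bound:Sigma}. In the case $\CA=1$ with $\rho(x)=e^{\alpha\norm{x}}$, the drift is a contraction, $\norm{e^{-As}x}\le\norm{x}$, hence $\norm{X_s^x}-\norm{x}\le\norm{\Sigma_s^{1/2}Y}$ and the ratio is controlled by the fluctuation alone; bounding $\norm{\Sigma_s^{1/2}Y}\le\normM{\Sigma_s}^{1/2}\sum_i|Y_i|$ decouples the coordinates, and the scalar Gaussian Laplace transform $\E{e^{\lambda|Z|}}=2e^{\lambda^2/2}F(\lambda)$ (for $Z\sim\cN(0,1)$) delivers a factor of the form $(2e^{\cdots}F(\cdots))^{d/2}$. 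In the case $\CA>1$ with $\rho(x)=(1+\alpha\norm{x})^\beta$, I instead bound $1+\alpha\norm{X_s^x}\le \CA(1+\alpha\norm{x})+\alpha\CA(\normM{\Sigma\Sigma^\top}/(2a))^{1/2}\norm{Y}$ uniformly in $x$, so that the ratio is at most $(\CA+\alpha\CA(\normM{\Sigma\Sigma^\top}/(2a))^{1/2}\norm{Y})^{\beta}$ and the $\bL_2$-moment is exactly the stated Gaussian polynomial moment.

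\textbf{Laplace-type time integral and conclusion.} With $M$ factored out, it remains to integrate $\sqrt d\,\normM{\Sigma_s^{-1}}^{1/2}e^{-as}$. Inserting the growth \eqref{eq:constant:growth:sigma-1} and using $\int_0^{+\infty}e^{-as}s^{-1/2}\ds=\Gamma(\tfrac12)a^{-1/2}=\sqrt{\pi/a}$ gives
\[
\int_0^{+\infty}e^{-as}\normM{\Sigma_s^{-1}}^{1/2}\ds\le \frac{c_{1,\eqref{eq:constant:growth:sigma-1}}}{a}+\frac{\sqrt\pi\,c_{2,\eqref{eq:constant:growth:sigma-1}}}{\sqrt a},
\]
which is precisely the final factor. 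Collecting the three steps into $\kappa_\infty=\CA K_{f,z}\normM{\Sigma}\,c_{\infty,\eqref{eq:condition:normA:rho:2}}$ produces the two displayed upper bounds. The ``in particular'' statements then follow by letting $\alpha\to 0$: in the first case the exponential factor tends to $(2F(0))^{d/2}=1$, and in the second the polynomial moment tends to $\CA^{\beta}$, so the displayed simplified products being $<1$ guarantees $\kappa_\infty<1$ for $\alpha$ small enough.

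\textbf{Main obstacle.} The delicate point is the uniform control of $M$ in the second paragraph: one must trade the growth of $\rho$ along the trajectory against the contraction of the deterministic drift $e^{-As}x$, uniformly over all starting points and all times, and simultaneously keep the time singularity of $\normM{\Sigma_s^{-1}}^{1/2}$ integrable near $s=0^+$. This is exactly the place where the value of $\CA$ dictates the admissible weight: when $\CA=1$ the drift is a true contraction and an exponential weight is affordable (via the explicit Gaussian Laplace transform), whereas for a general Hurwitz $-A$ the transient amplification $\CA>1$ can only be absorbed by a polynomial weight, since an exponential weight would make $M$ infinite.
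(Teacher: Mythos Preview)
Your proof is correct and follows essentially the same route as the paper: represent $X_s^x-e^{-As}x=\Sigma_s^{1/2}Y$, apply Cauchy--Schwarz to split off the factor $\E{\norm{\Sigma_s^{-1/2}Y}^2}^{1/2}\le \sqrt{d}\,\normM{\Sigma_s^{-1}}^{1/2}$, bound the $\rho$-ratio uniformly in $x,s$ via the drift contraction $\norm{e^{-As}x}\le \CA\norm{x}$ and the variance bound $\normM{\Sigma_s}\le \CA^2\normM{\Sigma\Sigma^\top}/(2a)$ (using the scalar Laplace transform $\E{e^{\lambda|Z|}}=2e^{\lambda^2/2}F(\lambda)$ in the exponential case and a direct moment in the polynomial case), and finish with the Laplace integral of $\normM{\Sigma_s^{-1}}^{1/2}$ against $e^{-as}$ through \eqref{eq:constant:growth:sigma-1}.

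The only difference is organizational: the paper also bounds $c_{T,\eqref{eq:condition:normA:rho:1}}$ and analyses the resulting upper bound for $\kappa_T$ as a function of $T$, showing it is minimized at $T=0$ or $T=+\infty$ and that $T=0$ is useless; you bypass this by going straight to $T=+\infty$ via $\inf_T\kappa_T\le\kappa_\infty$, which is cleaner and fully sufficient for what is claimed.
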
%\emmi{dire que qd CA=1, on peut prendre un poids exponentiel à forte crossance, sinon on se restreint à des poids polynomiaux. Faire ref à la partie numerique en disant que cela a un impact}
%\ar{J'ai ajouté la rem précédente}
The proof of Proposition \ref{prop:contraction} is postponed to Section \ref{proof:prop:contraction}. In light of this Proposition, we will consider only the case $T = +\infty$ in the remaining of the paper, which corresponds to equations \eqref{eq:representationZ:infiniteT} and \eqref{eq:representationZ:infiniteT:randomise}.

\begin{Remark}
 When $\Sigma=\sigma I_d$ and $A=a I_d$ with $\sigma>0$ and $a>0$, we can compute that $\CA=1$ and remark that, for all $t>0$ and $h>0$, 
 $$\frac{1}{\sqrt{1-e^{-h}}} \leqslant \frac{1}{\sqrt{1-e^{-t}}} 1_{h>t}+ \sqrt{\frac{t}{1-e^{-t}}}\frac{1_{h \leqslant t}}{\sqrt{h}}.$$
 Then, we obtain the following upper-bound
 \begin{align*}
  \|\Sigma_s^{-1}\|^{1/2} & =  \sqrt{\frac{2a}{\sigma^2 (1-e^{-2as})}} \leqslant \frac{\sqrt{2a}}{\sigma} \left(\frac{1}{\sqrt{1-e^{-t}}}+\sqrt{\frac{t}{1-e^{-t}}}\frac{1}{\sqrt{2as}}\right),\quad \forall t >0. 
 \end{align*}
 %\emmi{je ne comprends pas la majoration}
 %\ar{j'ai ajouté une ligne d'explication}
 Thus, if we consider the weight function $\rho(x) = e^{\alpha |x|}$ with $\alpha>0$, Proposition \ref{prop:contraction} gives us that
%{\color{lightgray}\begin{equation}
%\label{upperbound-kappa:old}
%    \kappa_{\infty} \leqslant K_{f,z}  \sqrt{\frac{d}{{a}}}\left(2e^{\frac{\alpha^2 \sigma^2}{2a^2}}F\left(\frac{\alpha \sigma}{a}\right)\right)^{d/2}  \inf_{t>0} \frac{\sqrt{2}+\sqrt{\pi}\sqrt{t}}{\sqrt{1-e^{-t}}}
%\end{equation}}
\begin{equation}
\label{upperbound-kappa}
    \kappa_{\infty} \leqslant K_{f,z}  \sqrt{\frac{d}{{a}}}\left(2e^{{\frac{\alpha^2 \sigma^2}{a^2}}}F\left(\frac{{\sqrt 2}\alpha \sigma}{{\sqrt a}}\right)\right)^{d/2}  \inf_{t>0} \frac{\sqrt{2}+\sqrt{\pi}\sqrt{t}}{\sqrt{1-e^{-t}}}.
\end{equation}
{A direct numerical computation shows that the above $\inf_{t>0}$ term equals $4.006...<\frac{\sqrt{2}+\sqrt{\pi}}{\sqrt{1-e^{-1}}}=4.008...$; hence, $\kappa_{\infty}$} is smaller than $1$ as soon as $\Kfz\sqrt{\frac{d}{{a}}}{<0.249}$ and $\alpha$ is small enough.
\end{Remark}

\section{Numerical Scheme}
\label{sec:numericalscheme}
The aim of this section is to define a fully implementable scheme in order to provide a numerical approximation of the function $v$ solution to the fixed point equation $v=\Phi_{\infty}(v)$. The proposed scheme is provided in Definition \ref{def:scheme}: it relies on the contraction property given by Proposition \ref{pr:Lpestimate}, a space discretization through a regular grid and an empirical mean appoximation of the expectation. A full study of the approximation error is obtained in Proposition \ref{prop:numericalerror} and Corollary \ref{prop:errornum:ergodic}. 

%\subsection{A first scheme based on a regular grid}

\subsection{Definition of the scheme}
We denote $\Pi$ a non empty finite subgrid of $\delta \mathbb{Z}^d$, $N$ its cardinality, $\delta$ its mesh size and $\Box$ its convex hull (in $\bR^d$). Without loss of generality we can assume that $0 \in \Box$. 
In order to define a multilinear interpolation procedure on $\Pi$, we consider the following basis functions:
$$\forall z \in \Pi, \quad\forall x \in \bR^d,\quad \psi_z(x):=\prod_{i=1}^d \left(1-|\delta^{-1}(x_i-z_i)|\right)_+,$$
where $(.)_+$ denotes the positive part function.
For a function $\phi : \Pi \rightarrow \bR^{1 \times d}$, we define $P\phi$ the interpolation of $\phi$ on $\bR^d$ as follows
\begin{equation*}
	P\phi(x)=:
	\begin{cases}
		\sum_{z \in \Pi} \psi_z(x)\phi(z), & x \in \Box \\
		\sum_{z \in \Pi} \psi_z(\text{Proj}(x,\Box))\phi(z), & x\notin \Box\,. 
	\end{cases}
\end{equation*}
By a small abuse of notation we also consider this interpolation operator for functions $\phi :\bR^d \rightarrow \bR^{1 \times d}$ by defining $P\phi := P\phi_{|\Pi}$.
By the same definition, $P$ can act also on vector-valued and matrix-valued functions. 

%Consider $\phi : \bR^d \rightarrow \bR^d$. Take a regular grid on $\bR^d$ and denoted $\Pi=\{x_i\}_{i \in \{1,...,N\}}$ and $\Box$ its convex hull. We define $P\phi$ the interpolation of $\phi_{|\Pi}$ on $\bR^d$ as follows
%\begin{equation*}
%	P\phi(x)=:%
%	\begin{cases}
%		P\phi_{|\Pi}(x), & x \in \Box \\
%		P\phi_{|\Pi} (Proj(x,\Box)), & x\notin \Box\,. 
%	\end{cases}
%\end{equation*}
%\ls{Prop below is on valid for the interpolation from Menozzi, Delarue  if we need it. }
This interpolation operator satisfies following standard properties.
\begin{Proposition}
\label{prop:properties-P}
Let  us consider $\phi, \tilde{\phi} : \bR^d \rightarrow \bR^{1\times d}$. Then we have
\begin{enumerate}
    \item $$\|P\phi - P\tilde{\phi}\|_{\infty} \leqslant \|\phi -\tilde{\phi}\|_{\infty}\,.$$
    \item If $\phi\in C^2(\bR^d,\bR^{1 \times d})$, then there exists $C\geqslant 0$ only depending on $|\nabla^2 \phi|_{\infty, \Box}$ such that
    $$\|P\phi-\phi\|_{\infty,\Box} \leqslant C\delta^2.$$
    \item
    \begin{equation}
        \label{prop:P:upperbound} 
        |P\phi(x)| \leqslant P\rho (x)\sup_{z \in \Pi}\left|\frac{\phi(z)}{\rho (z)}\right|,\quad\forall x \in \bR^d.
    \end{equation}
    \item
    \begin{equation}
        \label{prop:Prho/rho} 
        \sup_{x \in \bR^d} \left| \frac{P \rho(x)}{\rho(x)}\right| \leqslant \sup_{x \in \Box, y \in \Box, |y-x| \leqslant \sqrt{d} \delta} \frac{\rho(y)}{\rho(x)}.
    \end{equation}
\end{enumerate}
\end{Proposition}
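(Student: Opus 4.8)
The plan is to isolate three structural properties of the multilinear basis $(\psi_z)_{z\in\Pi}$ on $\Box$, from which all four assertions follow by short computations. Writing $\Box=\prod_{i=1}^d[a_i,b_i]$ as the convex hull of the tensor grid $\Pi$, the one-dimensional hat functions are nonnegative, sum to one, and reproduce affine functions; taking tensor products, I would record that for every $x\in\Box$:
(i) $\psi_z(x)\geqslant 0$ for all $z\in\Pi$;
(ii) $\sum_{z\in\Pi}\psi_z(x)=1$ (partition of unity);
(iii) $\sum_{z\in\Pi}\psi_z(x)\,z=x$ (reproduction of the identity);
together with the locality property that $\psi_z(x)\neq 0$ only for the (at most) $2^d$ vertices $z$ of the grid cell containing $x$, each of which satisfies $|z-x|\leqslant\sqrt d\,\delta$. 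Finally, since $\Box$ is convex and $0\in\Box$, the projection is norm-reducing, $|\mathrm{Proj}(x,\Box)|\leqslant|x|$, whence $\rho(\mathrm{Proj}(x,\Box))\leqslant\rho(x)$ because $\rho$ is nondecreasing in $|x|$; this is the device that transfers estimates on $\Box$ to all of $\bR^d$.

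Given these facts, properties (1) and (3) are immediate. For $x\in\Box$, $P\phi(x)-P\tilde\phi(x)=\sum_z\psi_z(x)(\phi(z)-\tilde\phi(z))$, so by (i)--(ii) $\|P\phi(x)-P\tilde\phi(x)\|\leqslant\|\phi-\tilde\phi\|_\infty\sum_z\psi_z(x)=\|\phi-\tilde\phi\|_\infty$; for $x\notin\Box$ the same bound holds since $P$ evaluates at $\mathrm{Proj}(x,\Box)\in\Box$. For (3), note that $P\rho(x)=\sum_z\psi_z(x)\rho(z)$ since $\rho$ is scalar, and factoring $\phi(z)=\rho(z)\,\phi(z)/\rho(z)$ gives $\|P\phi(x)\|\leqslant\sum_z\psi_z(x)\rho(z)\,\|\phi(z)/\rho(z)\|\leqslant P\rho(x)\sup_{z\in\Pi}\|\phi(z)/\rho(z)\|$, again extended off $\Box$ through the projection.

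For (4), I would use locality: for $x\in\Box$, $P\rho(x)=\sum_z\psi_z(x)\rho(z)$ is, by (i)--(ii), a convex combination of the values $\rho(z)$ at the cell vertices $z$, each with $z\in\Box$ and $|z-x|\leqslant\sqrt d\,\delta$; hence $P\rho(x)/\rho(x)\leqslant\max_z\rho(z)/\rho(x)\leqslant\sup_{x,y\in\Box,\,|y-x|\leqslant\sqrt d\delta}\rho(y)/\rho(x)$. For $x\notin\Box$, set $x'=\mathrm{Proj}(x,\Box)$; then $P\rho(x)=P\rho(x')$ and $\rho(x')\leqslant\rho(x)$, so $P\rho(x)/\rho(x)\leqslant P\rho(x')/\rho(x')$, which is already controlled by the interior bound.

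Property (2) is the only one requiring second-order structure, and I regard it as the main point. Fixing a cell $K\subset\Box$ and $x\in K$, I would use (ii)--(iii) to remove the constant and linear terms, writing $P\phi(x)-\phi(x)=\sum_z\psi_z(x)\big(\phi(z)-\phi(x)-\nabla\phi(x)(z-x)\big)$, the added linear part contributing zero because $\sum_z\psi_z(x)(z-x)=0$. Applying a second-order Taylor expansion to each scalar component of $\phi$ bounds every bracket by $\tfrac12|\nabla^2\phi|_{\infty,\Box}\,|z-x|^2\leqslant\tfrac12 d\,\delta^2|\nabla^2\phi|_{\infty,\Box}$, and summing against the convex weights $\psi_z(x)$ yields $\|P\phi(x)-\phi(x)\|\leqslant C\,\delta^2$ with $C$ depending only on $|\nabla^2\phi|_{\infty,\Box}$ (and the fixed dimension $d$). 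The genuine obstacle is establishing the reproduction identities (ii)--(iii) on the whole convex hull $\Box$, since these rest on $\Pi$ having full tensor-grid structure, so that all vertices of each cell meeting $\Box$ belong to $\Pi$; once this combinatorial bookkeeping is settled, the analytic content of (2) is a one-line Taylor estimate.
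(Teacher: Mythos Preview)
Your proof is correct and follows the same strategy as the paper, which dismisses (1)--(2) as standard and gives the identical factoring argument for (3). For (4) on $\bR^d\setminus\Box$, the paper asserts $P\rho(x)/\rho(x)\leqslant 1$ directly, whereas your reduction to the interior case via $\rho(\mathrm{Proj}(x,\Box))\leqslant\rho(x)$ is a cleaner way to handle that region.
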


    \begin{proof}
        The two first points are standard. For the third point, we have, for all $x \in \bR^d$,
     \begin{eqnarray*}
      |P\phi(x)| &=&\left|\sum_{z \in \Pi} \psi_z(\text{Proj}(x,\Box))\phi(z)\right| \leqslant \sum_{z \in \Pi} \psi_z (\text{Proj}(x,\Box))\rho(z) \left| \frac{\phi(z)}{\rho(z)}\right|\\
      &\leqslant& \sup_{z \in \Pi} \left|\frac{\phi(z)}{\rho(z)}\right| \sum_{z' \in \Pi} \psi_{z'} (\text{Proj}(x,\Box))\rho(z') =  P\rho (x) \sup_{z \in \Pi}\left|\frac{\phi(z)}{\rho (z)}\right|.
     \end{eqnarray*}
     So, it just remains to prove the fourth point.  
     Since $0 \in \Box$ and $\rho$ is an increasing function with respect to $|x|$, then $\frac{P \rho(x)}{\rho (x)}\leqslant 1$ for all $x \notin \Box$. When $x \in \Box$,
     $$|P \rho(x)| \leqslant \sup_{z \in \Pi, \sup_{1\leqslant j \leqslant  d} |x_j-z_j|<\delta} \rho(z) \leqslant \sup_{y \in \Box, |y-x|<\sqrt{d} \delta} \rho(y)$$
     which proves the result.\qed
    \end{proof}

%Once again, we assume that assumption \ref{eq:generatorfandb'} is fulfilled. 
%We define a growth truncation operator $T_{v,\rho}$ by
%$$T_{v,\rho} : \phi \mapsto \left\lfloor \frac{\phi}{\rho}\right\rfloor_{\|v\|_{\rho}} \rho $$
%for any function $\phi : \bR^d \rightarrow \bR^{1\times d}$ or $\phi : \Pi \rightarrow \bR^{1\times d}$, where 

We are now able to define our scheme in the next definition. The idea is to use the Picard iteration, to use a spatial approximation of functions onto $\Pi$ and to approximate expectations by empirical means. Let us remark that the sample size of empirical means will depend on the point $z \in \Pi$ where we compute an approximation of our solution $v(z)$: we denote it $M_z$. 
Finally, we also denote $\lfloor .\rfloor_{R}$ the projection onto the Euclidean ball $\bar{B}(0,R)$ of $\bR^{1 \times d}$.

\begin{Definition}
    \label{def:scheme}
     We construct a sequence of random functions $v^{n}_M : \Omega \times \Pi \rightarrow \bR^{1\times d}$, $n \in \mathbb{N}$ such that $v^0_M=0$ and, for all $n \in \mathbb{N}$, $z\in \Pi$,
\begin{equation}
 \label{def:defscheme1}
 v^{n+1}_M(z)=\left\lfloor\frac1{M_{z}} \sum_{j=1}^{M_{z}} R^{z}_{n+1,j}(Pv^n_M)\right\rfloor_{B} , %\left\lfloor \frac{ \frac1M \sum_{j=1}^M R^{x_i}_j(Pv^n_M) }{\rho(x_i)}\right\rfloor_{|v|_{\rho}}\rho(x_i)=
\end{equation}
where $B \geqslant \|v\|_{\infty}$, for any $\phi: \bR^d \rightarrow \bR^{1\times d}$,  $(R_{n,j}^{z}(\phi))_{n,j \in \mathbb{N}^*,z \in \Pi}$ are independent random variables
and for any $z \in \Pi$, $(R_{n,j}^{z}(\phi))_{n,j \in \mathbb{N}^*}$ have the same distribution as 
\begin{equation}
    \label{def:Rz}
R^{z}(\phi):=\frac{\sqrt{\pi}}{\theta}\sqrt{G}e^{-(\frac a\theta-1)G}\tilde{U}_{\frac{G}\theta} f\left(X^z_{\frac{G}\theta},{ \phi(X^z_{\frac{G}\theta}) \Sigma}\right),
\end{equation}
recalling that $\theta \in (0,a)$ and $G\overset d=\cG(1/2,1)$ is independent of $W$.
%\ar{les $R_j$ doivent aussi être indicées par $n$. A modifier.}
\end{Definition}

%\begin{Remark}
%By using Remark \ref{rem:projection}, we can replace $\lfloor . \right\rfloor_{\|v\|_\infty}$ by $\lfloor . \right\rfloor_{R}$ for any $R \in []$.
%\end{Remark}
\begin{Remark}
The condition $B \geqslant \|v\|_{\infty}$ gives a weak dependence of the scheme on the unknown solution $v$. Nevertheless, it is possible to use some theoretical a priori estimates (see e.g. the proof of Proposition \ref{prop:existence:uniqueness:EBSDE}) to set $B$. However, it seems that the truncation procedure is not necessary in practice since we observe the convergence in our numerical experiments without applying the truncation. 
\end{Remark}
\begin{Remark}
{The scheme given by Definition \ref{def:defscheme1} is fully implementable and we are able to give a complete study of the numerical error in Section \ref{subsec:theoreticalstudyscheme}: see Corollary \ref{prop:errornum:ergodic}. On the other hand, it is well known that a grid spacial approximation has a major drawback: The size of the grid exponentially increases when dimension $d$ linearly increases and thus, it is not possible in practice to get a numerical scheme that works as soon as $d$ is too large. Obviously, it is also possible to replace the grid spacial approximation by an other dimensional robust spacial approximation as a neural network for example, even if, in this case, the theoretical study of the numerical error would be more complicated. The numerical study of a scheme based on a neural network spacial approximation is left for future works.}
\end{Remark}
\begin{Remark}
Actually, the independence in $z$ could be removed since it does not affect the subsequent error analysis. Having common randomness for different $z$ is refereed to "Common Random Number" method in the literature \cite{glas:yao:92} and may give especially good results in sensitivity analysis problems. In our case, it would contribute to get a smoother (in space) numerical solution.
\end{Remark}
%\emm{I guess that we could remove the independence in $z$ = technique of Common Random Number, this gives more flexibility and in particular the resulting function could look like smoother. Text proposal: "Actually, the independence in $z$ could be removed, it does not affect the subsequent error analysis. Having common randomness for different $z$ is refereed to "Common Random Number" method in the literature \cite{glas:yao:92} and may give especially good results in sensitivity analysis problems. In our case, it contributes to get a smoother (in space) numerical solution.}

\subsection{Theoretical study of the scheme}
\label{subsec:theoreticalstudyscheme}
In order to treat the statistical error coming from the replacement of expectations by empirical means, we will consider {measure-concentration inequalities based on} Orlicz norms. We denote $\widetilde{\Psi} : \bR^+ \rightarrow \bR^+$ an Orlicz function, that is a continuous non-decreasing function, vanishing in zero and with $\lim_{x \rightarrow + \infty} \widetilde{\Psi}(x)=+\infty$ and we define the $\widetilde{\Psi}$-Orlicz norm of a real random vector $Y$ by 
$$|Y|_{\widetilde{\Psi}} := \inf \left\{ c >0, \mathbb{E}\left[\widetilde{\Psi}\left(\frac{|Y|}{c}\right)\right] \leqslant 1 \right\}.$$
We easily generalize previous defintion to matrix-valued random variables. We also assume that $\widetilde{\Psi}$ is convex\footnote{convex Orlicz functions are also referred to ``Young functions'' or ``N-functions'' in the literature.}, which implies in particular that $| . |_{\widetilde{\Psi}}$ is a norm, and an increasing function in order to insure that $\widetilde{\Psi}^{-1}$ is a concave function defined on $\bR^+$. 

In the remaining, we will use the following convex and increasing Orlicz function: 
\begin{align}
\Psi(.) := \exp(.) -1.
\label{eq:Psi:Orlicz:exp}
\end{align}
Let us remark that $|Y|_{\Psi}<+\infty$ implies that there exists $\varepsilon>0$ such that $\E{e^{\varepsilon |Y|}}<+\infty$ which means that $Y$ is light-tailed.
This Orlicz function satisfies some important properties that are recalled in the next proposition. 
%\begin{itemize}
% \item For $\beta \in (0,1]$, $\Psi^{LT}_{\beta}:=e^{x^{\beta}}-1$. We remark that $|Y|_{\Psi^{LT}_{\beta}}<+\infty$ implies that there exists $\varepsilon>0$ such that $\mathbb{E}[e^{\varepsilon Y^\beta}]<+\infty$ which means that $Y$ is light-tailed.
% \item For $\beta >1$, $\Psi^{HT}_{\beta}:=e^{(\ln (1+x))^{\beta}}-1$. Observe that when $|Y|_{\Psi^{LT}_{\beta}}<+\infty$, $Y$ has finite polynomial moment of order $p$ for any $p>0$ but $Y^\alpha$ may not have exponential moment for any $\alpha>0$. 
% So, this Orlicz function is well suited for heavy-tailed random variables that are not fat-tailed. In particular the case $\beta=2$ fits well to log-normal tailed random variables. 
% \item For $\beta >1$, $\Psi^{FT}_{\beta}:=x^{\beta}$. We consider these Orlicz functions when $Y$ has some polynomial moments of finite order. This is the case for fat-tailed random variables.
%\end{itemize}
%All these Orlicz functions satisfy some Talagrand inequality and  maximal inequality. 
\begin{Proposition}
\label{prop:Talagrand-maximal}
\begin{enumerate}
 \item {\bf [Talagrand inequality]} There exists a universal constant $C_{\Psi}$ such that, for all sequence $(Y_k)_{1 \leqslant k \leqslant K}$ of independent, mean zero, random variables satifying $|Y_k|_{\Psi}<+\infty$ for all $1 \leqslant k \leqslant K$, we have
\begin{equation}
 \label{ineq:Talagrand}
 \left|\sum_{k=1}^K Y_k\right|_{\Psi} \leqslant C_{\Psi} \left( \mathbb{E}\left[\left|\sum_{k=1}^K Y_k\right|\right]+\left|\max_{1\leqslant k \leqslant K} |Y_k| \right|_{\Psi} \right).
\end{equation}
 \item {\bf [Maximal inequality]} There exists a universal constant $C_{\Psi}$ such that, for all sequence $(Y_k)_{1 \leqslant k \leqslant K}$ of random variables satisfying $|Y_k|_{\Psi}<+\infty$ for all $1 \leqslant k \leqslant K$, we have
 \begin{equation}
 \label{ineq:maximal}
  \left|\max_{1\leqslant k \leqslant K} |Y_k| \right|_{\Psi} \leqslant C_{\Psi} \Psi^{-1}(K) \max_{1\leqslant k \leqslant K} |Y_k|_{\Psi}.
 \end{equation}
\end{enumerate}
\end{Proposition}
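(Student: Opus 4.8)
The plan is to handle the two inequalities separately, since both are classical facts about the sub-exponential Orlicz space generated by $\Psi(x)=e^{x}-1$; the real task is to invoke the correct known results and to verify that their hypotheses hold for this specific $\Psi$.

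For the maximal inequality \eqref{ineq:maximal} I would rely on the standard Orlicz maximal lemma (as found, for instance, in the monograph of van der Vaart and Wellner, Lemma~2.2.2). The single hypothesis to check is a submultiplicativity-type bound: there is a constant $c$ with $\Psi(x)\Psi(y)\leqslant \Psi(cxy)$ for all $x,y\geqslant 1$. For $\Psi(x)=e^{x}-1$ this is elementary, since $e^{x}+e^{y}\geqslant 2$ and $x+y\leqslant 2xy$ whenever $x,y\geqslant 1$, whence
\[
(e^{x}-1)(e^{y}-1)=e^{x+y}-e^{x}-e^{y}+1\leqslant e^{x+y}-1\leqslant e^{2xy}-1=\Psi(2xy),
\]
so $c=2$ is admissible. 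The cited lemma then delivers \eqref{ineq:maximal} with a universal constant $C_{\Psi}$. If a self-contained argument is preferred, one normalises $\max_{k}|Y_k|_{\Psi}=1$, uses that $\Psi$ is nondecreasing so that $\Psi(\max_{k}|Y_k|/a)\leqslant \sum_{k}\Psi(|Y_k|/a)$, and then exploits the submultiplicative bound applied with $y=\Psi^{-1}(K)$ to gain, on large values, the estimate $K\,\Psi(t/a)\leqslant \Psi(t)$ once $a=c\,\Psi^{-1}(K)$; summing over $k$ forces the expectation of the sum below $1$, which is exactly the definition of the Orlicz norm being controlled by $C_{\Psi}\Psi^{-1}(K)\max_k|Y_k|_\Psi$.

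The Talagrand inequality \eqref{ineq:Talagrand} is the substantive part, and here I would not attempt an elementary proof but import the concentration result for sums of independent (Banach-space valued) random variables due to Talagrand, as presented in the Ledoux--Talagrand theory of integrability of such sums. The principle behind the bound is that, for independent centred summands, the $\psi_{1}$-integrability of $\sum_{k}Y_k$ is governed solely by its first absolute moment together with the $\psi_{1}$-size of the single largest summand $\max_{k}|Y_k|$; the passage from the Hoffmann--Jorgensen inequality to the exponential Orlicz statement rests on controlling the exponential moments of the sum by those two quantities. The main obstacle is precisely this first item: it is genuinely non-elementary, and the care required is to quote it in exactly the stated form, in particular ensuring that the constant $C_{\Psi}$ is universal, i.e.\ independent of $K$ and of the distributions of the $Y_k$.
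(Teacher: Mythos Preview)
Your proposal is correct and matches the paper's own treatment: the paper simply cites Theorem~3 of Talagrand (1989) for \eqref{ineq:Talagrand} and Lemma~2.2.2 of van der Vaart--Wellner for \eqref{ineq:maximal}, without further argument. Your additional verification of the submultiplicativity hypothesis for $\Psi(x)=e^x-1$ and the sketch of a self-contained proof of the maximal inequality go beyond what the paper provides.
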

Talagrand inequality comes from Theorem 3 in \cite{Talagrand-89} while Maximal inequality is provided by Lemma 2.2.2 in \cite{vanderVart-Wellner-96}. We also provide the following technical Lemma, whose proof is postponed to Section \ref{proof:lem:estim:orlicz:ergo}.
\begin{Lemma}
    \label{lem:estim:orlicz:ergo}
     Let us assume that Assumption \ref{eq:generatorfandb'} is fulfilled. %For all $\lambda>0$
     There exists $C>0$ that does not depend on $n \in \mathbb{N}$, $M \in \mathbb{N}^*$, $\delta>0$ and $z \in \Pi $ such that, for all $\phi : \mathbb{R}^d \rightarrow \mathbb{R}^{1\times d}$ measurable such that $\|\phi\|_{\infty} \leqslant B$ % with the bound $\|\phi \|_{\infty} \leqslant \lambda$
     and $z \in \Pi$,
     \begin{equation}
        \label{ass:finite-Orlicz}
        \left| \frac{R^z(\phi)}{1+|z|} - \E{\frac{R^{z}(\phi)}{1+|z|} } \right|_{\Psi} + \E{\left| \frac{R^z(\phi)}{1+|z|} - \E{\frac{R^{z}(\phi)}{1+|z|} } \right|^2} \leqslant C.
       \end{equation}
    \end{Lemma}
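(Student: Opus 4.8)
The plan is to dominate the centered quantity by a single, explicit, parameter-free random variable. Write $\xi := R^z(\phi)/(1+\norm{z})$, which is a $\bR^{1\times d}$-valued random variable. Since $|\cdot|_\Psi$ is a norm and the $\Psi$-norm of a constant vector $c$ is $\norm{c}/\Psi^{-1}(1)=\norm{c}/\ln 2$ (because $\Psi=\exp(\cdot)-1$), the triangle inequality gives
\[
\left| \xi - \E{\xi}\right|_\Psi \leqslant |\xi|_\Psi + \frac{\E{\norm{\xi}}}{\ln 2},\qquad
\E{\norm{\xi - \E{\xi}}^2} = \var(\xi)\leqslant \E{\norm{\xi}^2}.
\]
Hence it suffices to bound $|\xi|_\Psi$ and $\E{\norm{\xi}^2}$ uniformly, the $\bL^1$ term being controlled by the latter.

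To produce a pathwise domination, first bound the driver using \ref{eq:generatorfandb':1} and $\|\phi\|_\infty\leqslant B$: for any $s>0$,
\[
\norm{f(X^z_s,\phi(X^z_s)\Sigma)} \leqslant \norm{f(0,0)} + \Kfx\norm{X^z_s} + \Kfz B\normM{\Sigma} =: C_0 + \Kfx\norm{X^z_s}.
\]
Introduce the centered, $z$-independent martingale part $M_s := X^z_s - e^{-As}z = e^{-As}\int_0^s e^{Au}\Sigma\,\dW_u$, which by \eqref{eq:OU:bis} is $\cN(0,\Sigma_s)$-distributed; write $M_s=\Sigma_s^{1/2}N_s$ with $N_s:=\Sigma_s^{-1/2}M_s\sim \cN(0,I_d)$. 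From \eqref{def:tildeU}, $\tilde U_s = e^{as}M_s^\top\Sigma_s^{-1}e^{-As}$, so \eqref{eq:bound:expA}, the identity $\Sigma_s^{-1}M_s=\Sigma_s^{-1/2}N_s$, and \eqref{eq:constant:growth:sigma-1} yield
\[
\norm{\tilde U_s} \leqslant \CA\,\normM{\Sigma_s^{-1}}^{1/2}\norm{N_s}\leqslant \CA\Big(c_{1,\eqref{eq:constant:growth:sigma-1}} + \tfrac{c_{2,\eqref{eq:constant:growth:sigma-1}}}{\sqrt s}\Big)\norm{N_s},
\]
while the uniform bound $\normM{\Sigma_s}\leqslant \CA^2\normM{\Sigma\Sigma^\top}/(2a)$ from \eqref{eq:bound:Sigma} gives $\norm{X^z_s}/(1+\norm{z})\leqslant \CA + \normM{\Sigma_s}^{1/2}\norm{N_s}\leqslant C(1+\norm{N_s})$.

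Now set $s=G/\theta$. The decisive point is that the factor $\sqrt G$ in $R^z(\phi)$ exactly absorbs the $s^{-1/2}$ singularity of the weight: $\sqrt G\,(c_{1,\eqref{eq:constant:growth:sigma-1}}+c_{2,\eqref{eq:constant:growth:sigma-1}}\sqrt\theta/\sqrt G)=c_{1,\eqref{eq:constant:growth:sigma-1}}\sqrt G+c_{2,\eqref{eq:constant:growth:sigma-1}}\sqrt\theta\leqslant C(\sqrt G+1)$. Writing $\gamma:=a/\theta-1>0$ (positive since $\theta\in(0,a)$) and $N:=N_{G/\theta}$, we obtain
\[
\norm{\xi}\leqslant \frac{\sqrt\pi}{\theta}\sqrt G\,e^{-\gamma G}\norm{\tilde U_{G/\theta}}\cdot\frac{C_0+\Kfx\norm{X^z_{G/\theta}}}{1+\norm{z}}\leqslant C(\sqrt G+1)e^{-\gamma G}\,\norm{N}(1+\norm{N}).
\]
As $g\mapsto(\sqrt g+1)e^{-\gamma g}$ is bounded on $(0,\infty)$, say by $H$, this is the pathwise domination $\norm\xi\leqslant H\,V$ with $V:=\norm N+\norm N^2$. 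Conditionally on $G$, $N\sim\cN(0,I_d)$ whatever the value of $G$, so $V$ is distributed as $\norm{\mathcal N}+\norm{\mathcal N}^2$ with $\mathcal N\sim\cN(0,I_d)$ and is independent of $G,z,\phi,n,M,\delta$.

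It then remains to estimate two functionals of $V$. The second moment $\E{V^2}=\E{(\norm{\mathcal N}+\norm{\mathcal N}^2)^2}$ is a finite $d$-dependent constant, whence $\E{\norm\xi^2}\leqslant H^2\E{V^2}\leqslant C$. For the Orlicz norm, monotonicity of $|\cdot|_\Psi$ gives $|\xi|_\Psi\leqslant H|V|_\Psi$, and $|V|_\Psi<+\infty$ because $\norm{\mathcal N}^2$ follows a chi-squared law with $\E{e^{t\norm{\mathcal N}^2}}<+\infty$ for $t<1/2$, so $V$ is sub-exponential and $|V|_\Psi$ is again a finite $d$-dependent constant. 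Collecting the estimates gives the claim with $C$ depending only on $a,\theta,\Sigma,\CA,c_{1,\eqref{eq:constant:growth:sigma-1}},c_{2,\eqref{eq:constant:growth:sigma-1}},\Kfx,\Kfz,B,\norm{f(0,0)}$ and $d$, hence independent of $n,M,\delta,z$ and of $\phi$ within $\{\|\phi\|_\infty\leqslant B\}$. The main obstacle is precisely the interplay between the time-singularity $\normM{\Sigma_s^{-1}}^{1/2}\sim s^{-1/2}$ of the likelihood weight $\tilde U_s$ near $s=0$ and the weighting $\sqrt G\,e^{-(a/\theta-1)G}$ coming from the Gamma time-randomization: a crude bound would make $\E{e^{\norm\xi/c}}$ diverge as $s\to0^+$, and it is only the cancellation $\sqrt G\cdot s^{-1/2}=\sqrt\theta$ that yields a bounded dominating factor; once this is secured, the product $\norm N(1+\norm N)$ of Gaussian norms is exactly sub-exponential, matching $\Psi=\exp(\cdot)-1$, and the rest is routine.
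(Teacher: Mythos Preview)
Your proof is correct and follows essentially the same route as the paper: both arguments exploit that $\Sigma_s^{-1/2}(X^z_s-e^{-As}z)\sim\cN(0,I_d)$ and reduce the Orlicz bound to the finiteness of the exponential moment of a chi-squared-type variable, after checking that the $s^{-1/2}$ singularity of $\normM{\Sigma_s^{-1}}^{1/2}$ near $s=0$ is neutralized by the factor $\sqrt G$ from the Gamma randomization. The only difference is presentational: the paper integrates over the density of $G$ and uses Young's inequality in the exponent to reach $\E{e^{cC|\Sigma_s^{-1/2}(X^z_s-e^{-As}z)|^2}}=(1-2cC)^{-d/2}$, whereas you produce a clean pathwise domination $\|\xi\|\leqslant H(\|N\|+\|N\|^2)$ upfront and then invoke sub-exponentiality of $\|N\|^2$ directly---a slightly more transparent packaging of the same estimate.
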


\begin{Proposition}
\label{prop:numericalerror}
Let us assume that Assumption \ref{eq:generatorfandb'} is fulfilled and 
%\begin{equation}
%    \label{ass:contraction_num}
%    \eta:=\kappa_{\infty} \sup_{x \in \Box, y \in \Box, |y-x| \leqslant \sqrt{d} \delta} \frac{\rho(x+y)}{\rho(x)}<1,
%    \end{equation}
$\kappa_{\infty}<1$, recalling that $\kappa_{\infty}$ is defined in Proposition \ref{pr:Lpestimate}.
In particular, Proposition \ref{pr:Lpestimate} gives us that the fixed point equation $\Phi_{\infty}(v)=v$ has a unique solution $v$.
We also assume that 
$v \in C^2(\bR^d,\bR^{1\times d})$ with bounded second derivatives.

 %$$H_z := \frac{1}{M} \sum_{m=1}^M \frac{R^{z}_m(Pv^n_M)}{\rho(z)} - \mathbb{E}_{v^n_M}\left[\frac{R^{z}(Pv^n_M)}{\rho(z)} \right]$$

% 
%  
%  Let us assume that there exists $t>0$ and $C>0$ that does not depend on $M$, $n$ and $\delta$, such that 
%  $$H_z := \frac{1}{M} \sum_{m=1}^M \frac{R^{z}_m(Pv^n_M)}{\rho(z)} - \mathbb{E}_{v^n_M}\left[\frac{R^{z}(Pv^n_M)}{\rho(z)} \right]$$
%  satisfies
% \begin{equation}
% \label{ass:expo_moment}
% \mathbb{E}\left [ e^{ t |\sqrt{M}H_z|^2}\right] \leqslant C, \forall z \in \Pi, M \in \mathbb{N}^*.
% \end{equation} 
Then there exists a constant $C>0$ that does not depend on $M$, $n$ and $\delta$ such that 
\begin{align*}
& \E{ \sup_{x \in \bR^d} \left|\frac{ Pv^{n}_M(x) -v(x)}{\rho(x)}\right| }\\
  \leqslant &C \left(\sup_{x \in \bR^d} \left|\frac{P\rho(x)}{\rho(x)} \right|\inf_{z \in \Pi} \frac{\log({1+}N)(1+|z|)}{\sqrt{M_z} \rho(z)}+\delta^2
 +  \frac{1}{\inf_{x \in \partial \Box}  \rho(x)}\right)+\kappa_{\infty}^n \E{ \sup_{x \in \bR^d} \left|\frac{v(x)}{\rho(x)}\right| }.
\end{align*}
\end{Proposition}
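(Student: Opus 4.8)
The plan is to run a Picard-type error analysis: I control the one-step propagation of the error measured in the weighted norm $\normr{\cdot}$, and then sum the resulting geometric recursion. Throughout, let $\cF^n$ denote the $\sigma$-field generated by all samples drawn through step $n$, so that by \eqref{def:Rz}--\eqref{def:defscheme1} and the representation \eqref{eq:representationZ:infiniteT:randomise} we have $\E{R^z(Pv^n_M)\mid \cF^n}=\Phi_\infty(Pv^n_M)(z)$, i.e. the conditional mean of the empirical estimator at $z$ is exactly $\Phi_\infty$ applied to the current interpolated iterate. A first, harmless reduction is that the truncation $\lfloor\cdot\rfloor_B$ can be ignored: since $\|v\|_\infty\leqslant B$ (Theorem \ref{thm:representationBSDE:ergodic}), the vector $v(z)$ lies in $\bar B(0,B)$, and the projection onto this convex set is $1$-Lipschitz and fixes $v(z)$, so $\|v^{n+1}_M(z)-v(z)\|\leqslant \|\tfrac1{M_z}\sum_j R^z_{n+1,j}(Pv^n_M)-v(z)\|$.

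For the one-step estimate I would insert $\Phi_\infty(Pv^n_M)(z)$ and use $v=\Phi_\infty(v)$ to split, for each $z\in\Pi$,
\begin{equation*}
\frac{\|v^{n+1}_M(z)-v(z)\|}{\rho(z)}\leqslant \underbrace{\frac{\|\tfrac1{M_z}\sum_j R^z_{n+1,j}(Pv^n_M)-\Phi_\infty(Pv^n_M)(z)\|}{\rho(z)}}_{\text{statistical}}+\underbrace{\frac{\|\Phi_\infty(Pv^n_M)(z)-\Phi_\infty(v)(z)\|}{\rho(z)}}_{\text{contraction}},
\end{equation*}
where the contraction term is at most $\kappa_\infty\normr{Pv^n_M-v}$ by Proposition \ref{pr:Lpestimate}. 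I then pass from the grid to all of $\bR^d$ by writing $Pv^{n+1}_M-v=P(v^{n+1}_M-v)+(Pv-v)$: applying \eqref{prop:P:upperbound} the first piece is bounded by $\sup_x\frac{P\rho(x)}{\rho(x)}\cdot\sup_{z\in\Pi}\frac{\|v^{n+1}_M(z)-v(z)\|}{\rho(z)}$, which is where the interpolation prefactor $\sup_x P\rho(x)/\rho(x)$ enters (estimated through \eqref{prop:Prho/rho}). The term $Pv-v$ is the \emph{deterministic} interpolation error of $v$, which I split according to $\Box$: on $\Box$, the $C^2$ assumption with bounded Hessian and the second item of Proposition \ref{prop:properties-P} give $\|Pv-v\|_{\infty,\Box}\leqslant C\delta^2$; off $\Box$, boundedness of $v$ together with the radial monotonicity of $\rho$ (so that $\rho(x)\geqslant\inf_{\partial\Box}\rho$ whenever $x\notin\Box$) yields $2\|v\|_\infty/\inf_{\partial\Box}\rho$. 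This produces the $\delta^2$ and $1/\inf_{\partial\Box}\rho$ terms of the statement.

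The heart of the argument, and the main obstacle, is the statistical term, which must be controlled \emph{uniformly over the $N$ grid points} and in expectation. Here I would work in the Orlicz norm $|\cdot|_\Psi$ with $\Psi=\exp(\cdot)-1$. Conditionally on $\cF^n$ the summands $R^z_{n+1,j}(Pv^n_M)-\Phi_\infty(Pv^n_M)(z)$ are i.i.d.\ and centred, and Lemma \ref{lem:estim:orlicz:ergo} guarantees that their normalization by $1+|z|$ has $\Psi$-Orlicz norm and $\bL_2$-norm bounded by a constant that is uniform in $n,M,\delta,z$ (it only uses $\|Pv^n_M\|_\infty\leqslant B$ and the sub-Gamma moments of $\tilde U_{G/\theta}$ and $G$ via \eqref{eq:bound:esp:Utilde}). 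Talagrand's inequality \eqref{ineq:Talagrand}, together with the $\bL_2$ control of the empirical mean and the maximal inequality \eqref{ineq:maximal} applied over the $M_z$ samples, bounds the single-point deviation by $C(1+|z|)/\sqrt{M_z}$ in $\Psi$-norm; a second application of \eqref{ineq:maximal}, now over the $N=|\Pi|$ points, brings in the factor $\Psi^{-1}(N)\sim\ln N$, and after dividing by $\rho(z)$ and taking expectations produces the statistical term appearing in the statement. The delicate points are precisely the uniformity in $n$ of the constant of Lemma \ref{lem:estim:orlicz:ergo} and the fact that the two nested maximal inequalities combine without degrading the logarithmic rate.

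Finally I would assemble these pieces into a recursion of the form $E^{n+1}\leqslant \kappa_\infty E^{n}+b$, where $E^n:=\E{\normr{Pv^n_M-v}}$ and $b$ collects the statistical and interpolation contributions, all uniform in $n$; iterating and using the initialization $v^0_M=0$, hence $E^0=\normr{v}=\E{\sup_x\|v(x)/\rho(x)\|}$, gives $E^n\leqslant \kappa_\infty^{\,n}\normr{v}+b/(1-\kappa_\infty)$, which is the claim once $1/(1-\kappa_\infty)$ is absorbed into $C$. The one bookkeeping subtlety is that the interpolation amplification $\sup_x P\rho(x)/\rho(x)$ attached to the statistical grid error also multiplies the contraction term, so the effective one-step factor is $\kappa_\infty\sup_x P\rho(x)/\rho(x)$; this is controlled via \eqref{prop:Prho/rho}, and in the regime where it stays below $1$ it is absorbed so that the displayed geometric rate remains $\kappa_\infty$.
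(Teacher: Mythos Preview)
Your proposal is correct and follows essentially the same route as the paper: the same three-term split into a statistical error, a contraction error, and the deterministic interpolation error $Pv-v$; the identical Orlicz-norm treatment of the statistical piece (Lemma~\ref{lem:estim:orlicz:ergo}, Talagrand's inequality~\eqref{ineq:Talagrand}, and two applications of the maximal inequality~\eqref{ineq:maximal} to produce the $\ln N/\sqrt{M_z}$ factor); and the same geometric recursion $e_{\infty,n+1}\leqslant \kappa_\infty e_{\infty,n}+b$ to close.

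The one place where your bookkeeping differs is the contraction step. The paper writes $\mathcal{E}_{\infty,2}=\mathbb{E}\big[\normr{Pv-P\lfloor\Phi_\infty(Pv^n_M)\rfloor_B}\big]$ and removes $P$ directly, invoking item~1 of Proposition~\ref{prop:properties-P} together with the $1$-Lipschitzness of $\lfloor\cdot\rfloor_B$, to get exactly $\kappa_\infty e_{\infty,n}$ and hence the clean rate $\kappa_\infty^n$. Your route through \eqref{prop:P:upperbound} instead picks up the amplification $\sup_x P\rho(x)/\rho(x)$ on the contraction factor. Your final remark that this factor can be ``absorbed so that the displayed geometric rate remains $\kappa_\infty$'' is not quite right: since $\sup_x P\rho(x)/\rho(x)\geqslant 1$, the effective rate $(\kappa_\infty\sup_x P\rho/\rho)^n$ cannot be bounded by $C\kappa_\infty^n$ uniformly in $n$. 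The paper's organisation of this step avoids the issue and yields the stated bound as written.
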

%\emm{I think (I am not sure) that people prefer to write $\log$ instead of $\ln$: I am ok with both. In figure, logarithm are written with $\log$. To avoid redoing figures, I would suggest to change the $\ln$ in $\log$.}

%\ar{On peut avoir une borne sur l'erreur d'estimation de $\lambda$}

The proof of Proposition \ref{prop:numericalerror} is postponed to Section \ref{proof:prop:numericalerror}.

\begin{Remark}
\label{rem:comments-num-error}
 The upper bound obtained for the numerical error in Proposition \ref{prop:numericalerror} can be easily analyzed: 
 \begin{enumerate}
 \item The first term is the statistical error coming from the approximation of the expectation by an empirical mean. The growth of $\rho$ allows to decrease the size sample $M_z$ when $|z|$ is large.
 \item The second term is related to the space discretization by a discrete grid $\delta \mathbb{Z}^d$.
 \item  This third term is a truncation error. In order to get a good control on it, we should consider a weight function $\rho$ with large enough growth.% than the solution $v$. 
 %In particular, we can use the following upper bound:
 %\begin{equation}
 %\label{ineq:bound_truncation}
 % \sup_{x \in \bR^d \setminus \Box} \left\|\frac{Pv(x)-v(x)}{\rho(x)}\right\| \leqslant   \sup_{x \in \bR^d \setminus \Box} \frac{\|Pv(x)\|+\|v(x)\|}{\rho(x)} \leqslant   \frac{\sup_{x \in \partial \Box}\|v(x)\|}{\inf_{x \in \partial \Box} \rho(x)} + \sup_{x \in \bR^d \setminus \Box} \frac{\|v(x)\|}{\rho(x)}.
 %\end{equation}
 \item The last term comes from the Picard procedure and it tends to $0$ only if we have a contraction property for $\Phi_{\infty}$, i.e. $\kappa_{\infty}<1$.
 \end{enumerate}
\end{Remark}

We are now able to specify the error given by Proposition \ref{prop:numericalerror} when we assume that our grid $\Pi$ is centered in $0$, and is given by
$$\left\{(i_1\delta,...,i_d \delta) \,|\, i_k \in \{-\widetilde{N},...,\widetilde{N}\}, k \in \{1,...,d\} \right\}$$
for a given $\widetilde{N} \in \mathbb{N}$, which implies that $N=(2\widetilde{N}+1)^d$. We also take $M_z = \widetilde{M}(1+|z|)^2\rho^{-2}(z)$.

\begin{Corollary}
\label{prop:errornum:ergodic}
Let us assume that assumptions of Proposition \ref{prop:numericalerror} are fulfilled.
\begin{itemize}
\item If $C_A=1$ and $\rho(x) = e^{\alpha|x|}$ {($\alpha>0$)}, then we have
  $$\E{ \sup_{x \in \bR^d} \left|\frac{ Pv^{n}_M(x) -v(x)}{\rho(x)}\right| }= O\left( \delta^2+\frac{\log (2+ \widetilde{N})}{\sqrt{\widetilde{M}}} + e^{-\alpha\widetilde{N}\delta}+ \kappa_{\infty}^n \right).$$
\item If $C_A>1$ and $\rho(x) = (1+\alpha |x|)^\beta$ {($\alpha>0, \beta \geqslant 1$)}, then we have
$$\E{ \sup_{x \in \bR^d} \left|\frac{ Pv^{n}_M(x) -v(x)}{\rho(x)}\right| } = O\left( \delta^2+\frac{\log (2+\widetilde{N})}{\sqrt{\widetilde{M}}} + (1+\alpha \widetilde{N} \delta)^{-\beta}+ \kappa_{\infty}^n \right).$$
\end{itemize}
\end{Corollary}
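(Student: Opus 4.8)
The plan is to specialize, one at a time, the four error terms appearing in the upper bound of Proposition \ref{prop:numericalerror} to the prescribed symmetric grid $\Pi$ (with $N=(2\tilde N+1)^d$ and mesh $\delta$) and to the sample sizes $M_z=\tilde M(1+|z|)^2\rho^{-2}(z)$. Each term reduces to an elementary estimate, so the work is bookkeeping rather than a genuine difficulty, the analytic content being already carried by Proposition \ref{prop:numericalerror}.

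First, for the statistical term, I would substitute $M_z=\tilde M(1+|z|)^2\rho^{-2}(z)$ into the quantity $\frac{(1+|z|)}{\sqrt{M_z}\,\rho(z)}$; since $\sqrt{M_z}=\sqrt{\tilde M}\,(1+|z|)\rho^{-1}(z)$, this collapses to the constant $\tilde M^{-1/2}$ independently of $z$, so that $\inf_{z\in\Pi}$ of it equals $\tilde M^{-1/2}$. Because $N=(2\tilde N+1)^d$, one has $\ln N=d\ln(2\tilde N+1)=O(\ln\tilde N)$. It then remains to bound the prefactor $\sup_{x}\left|\frac{P\rho(x)}{\rho(x)}\right|$ via Proposition \ref{prop:properties-P}, point (4): in the exponential case $\rho(y)/\rho(x)\leqslant e^{\alpha|y-x|}\leqslant e^{\alpha\sqrt d\,\delta}$, and in the polynomial case $\rho(y)/\rho(x)\leqslant(1+\alpha\sqrt d\,\delta)^\beta$, over the relevant range $|y-x|\leqslant\sqrt d\,\delta$; both are $O(1)$ for bounded $\delta$. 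Hence the statistical term is $O(\ln\tilde N/\sqrt{\tilde M})$ in both cases. The second term is literally $\delta^2$, hence $O(\delta^2)$.

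For the third (truncation) term I would first identify the convex hull as the cube $\Box=[-\tilde N\delta,\tilde N\delta]^d$. Since $\rho$ is increasing in $|x|$, the infimum of $\rho$ over $\partial\Box$ is attained at a face center, i.e. at Euclidean distance $\tilde N\delta$ from the origin (the in-radius of the cube); therefore $\inf_{x\in\partial\Box}\rho(x)=e^{\alpha\tilde N\delta}$ in the exponential case and $(1+\alpha\tilde N\delta)^\beta$ in the polynomial case, giving the announced $e^{-\alpha\tilde N\delta}$, respectively $(1+\alpha\tilde N\delta)^{-\beta}$. Finally, for the Picard term, $\|v\|_\infty<+\infty$ by Theorem \ref{thm:representationBSDE:ergodic}, point (2), together with $\rho\geqslant 1$, yields $\mathbb{E}\!\left[\sup_{x}\left\|\frac{v(x)}{\rho(x)}\right\|\right]\leqslant\|v\|_\infty$, so this contribution is $O(\kappa_\infty^n)$. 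Summing the four contributions and absorbing all constants into the $O(\cdot)$ notation gives both displayed estimates.

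As for the main obstacle: there is essentially none of analytic depth here. The only points requiring a little care are the geometric computation of $\inf_{x\in\partial\Box}|x|=\tilde N\delta$, and checking that the prefactor $\sup_{x}\left|\frac{P\rho(x)}{\rho(x)}\right|$ stays bounded uniformly as $\delta\to0$ so that it does not degrade the statistical rate; both are handled by the monotonicity of $\rho$ in $|x|$ and by Proposition \ref{prop:properties-P}.
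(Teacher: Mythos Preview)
Your proposal is correct and follows essentially the same route as the paper: apply Proposition \ref{prop:numericalerror}, then evaluate each of the four terms by plugging in the explicit choice of $\Pi$, $M_z$, and $\rho$, using Proposition \ref{prop:properties-P}(4) for the $P\rho/\rho$ prefactor and the in-radius of the cube for the truncation term. You are in fact slightly more explicit than the paper in spelling out the collapse of the statistical term via $M_z=\tilde M(1+|z|)^2\rho^{-2}(z)$, the relation $\ln N=O(\ln\tilde N)$, and the bound on the Picard term through $\|v\|_\infty<+\infty$ and $\rho\geqslant 1$.
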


\begin{proof}
    We just have to apply Proposition \ref{prop:numericalerror} and specify some terms in the upper-bound by setting $\rho$.
    
    First of all, if $C_A=1$, then we can take $\rho(x)=e^{\alpha |x|}$ according to Proposition \ref{prop:contraction}. We apply the fourth point of  Proposition \ref{prop:properties-P} to get
    $$\sup_{x \in \bR^d} \left|\frac{P\rho(x)}{\rho(x)} \right|\leqslant \sup_{x \in \Box, y \in \Box, |y-x| \leqslant \sqrt{d} \delta} e^{\alpha |y|-\alpha |x|}\leqslant e^{\alpha \sqrt{d} \delta}.$$
    We also have $\inf_{x \in \partial \Box}  \rho(x) =   e^{\alpha \widetilde{N} \delta}$
%    \begin{align*}
 %       \frac{1}{\inf_{x \in \partial \Box}  \rho(x)} &= \frac{1}{e^{\alpha \widetilde{N} \delta}}%\frac{\sup_{x \in \partial \Box}|v(x)|}{\inf_{x \in \partial \Box} \rho(x)} + \sup_{x \in \bR^d \setminus \Box} \frac{|v(x)|}{\rho(x)} \\
        %&\leqslant \frac{C}{e^{\alpha\widetilde{N}\delta}}+\sup_{|x| \geqslant \widetilde{N}\delta} \frac{C(1+|x|)}{e^{\alpha |x|}}= O\left(|\widetilde{N}\delta|e^{-\alpha\widetilde{N}\delta} \right).
  %      \end{align*}
    which gives us the result.

    Otherwise, $C_A>1$ and we can take $\rho(x)=(1+\alpha |x|)^\beta$ according to Proposition \ref{prop:contraction}. Then
    $$\sup_{x \in \bR^d} \left|\frac{P\rho(x)}{\rho(x)} \right|\leqslant \sup_{x \in \Box, y \in \Box, |y-x| \leqslant \sqrt{d} \delta} \left( \frac{1+\alpha |y|}{1+\alpha |x|}\right)^\beta \leqslant (1+\alpha\sqrt{d} \delta)^\beta,$$
    and 
    $\inf_{x \in \partial \Box}  \rho(x) =(1+\alpha \widetilde{N} \delta)^{\beta}$ which concludes the proof.
    %Thanks to Lemma \ref{lem:estim:orlicz:ergo} and Proposition \ref{prop:Talagrand-maximal} we are allowed to apply Proposition \ref{prop:numericalerror} and we just have to study all the terms in the upper-bound. We have
%All the other terms are tackled as in the proof of Proposition \ref{prop:errornum:infinitehorizon}.
\qed \end{proof}

\section{Numerical experiments}
\label{sec:num:exp}
%\ar{I have implemented a grid code (in any dimension) that gives nice results for small dimensions (1 to 4). I have only tackled one ergodic BSDE example. I am in discussion with an engineer in order to do parallelization that should improve a lot the speed.}

We use our approach to solve numerically the ergodic BSDE:
\begin{equation*}
 Y_t = Y_T + \int_t^T \left(f(X_s,Z_s)-\lambda\right) \ds -\int_t^T Z_s \dW_s,\quad 0\leqslant t \leqslant T,
\end{equation*}
where $X$ is an Ornstein-Uhlenbeck process in dimension $d$,  $\sigma=I_d$ and $A=a I_d$, 
$$f(x,z) =1+ \sin(\gamma(|x| + |z|)) +\gamma |z| - \sin(\gamma(|x|+2 |x| e^{-|x|^2})) - (2\gamma|x|+2|x|^2- d+2 a |x|^2)e^{-|x|^2}. $$

We can easily check that the unique solution is given by $Y_t = u(X_t)$, $Z_t = v(X_t)$, $\lambda=1$, where
$u(x) = e^{- |x|^2}$ and $v(x)=-2 x^{{\top}} e^{- |x|^2}$.%\emm{$v$ is a row vector}
Moreover, $X^x_t \sim \mathcal{N}(e^{-at}x,\frac{1-e^{-2at}}{2a}{I_d})$.
In all our numerical experiments we consider a number of Monte-Carlo samples $M_z$ that does not depend on $z$, denoted $M$ in the following. Moreover, we do not apply the truncation step, i.e. we consider the case $B=+\infty$.

\paragraph{\it Dimension 1.} Figure \ref{fig:dim1} illustrates the convergence of our algorithm in dimension $1$. We remark that the Picard scheme has almost converged at the third iteration. Moreover, the truncation of the domain has an impact only on the two extreme points of the grid. {Thus, for this reason, in all numerical experiments, errors are measured on the grid points except on the $r$ extreme ones for various values of $r$, i.e. by setting}
\begin{align*}
\mathfrak{E}^{d,r}_{\infty,n}&:=\sup  \mathcal{E}^{d,r}_{n}, \\ 
\mathcal{E}^{d,r}_{n} &:= \left\{|v^n_{M}(x)-v(x)| : x=(i_1\delta,...,i_d \delta), i_k \in \{-(\widetilde{N}-r),...,(\widetilde{N}-r)\}, k \in \{1,...,d\}  \right\},
\end{align*}
%\emm{it would be better to simply write $\mathcal{E}^{d,r}_{n}$ for the  data set of errors.}
{and for the next experiments in dimension $1$ we consider the sup error $\mathfrak{E}^{1,1}_{\infty,n}$.}
% \begin{align*}
% \mathfrak{E}^{d,r}_{\infty,n}&:=\sup  \mathcal{E}^{d,r}_{\infty,n}, \\ 
% \mathcal{E}^{d,r}_{\infty,n} &:= \left\{|v^n_{M}(x)-v(x)| : x=(i_1\delta,...,i_d \delta), i_k \in \{-(\widetilde{N}-r),...,(\widetilde{N}-r)\}, k \in \{1,...,d\}  \right\}
% \end{align*}

\begin{figure}
    \centering
    \includegraphics[width=14cm]{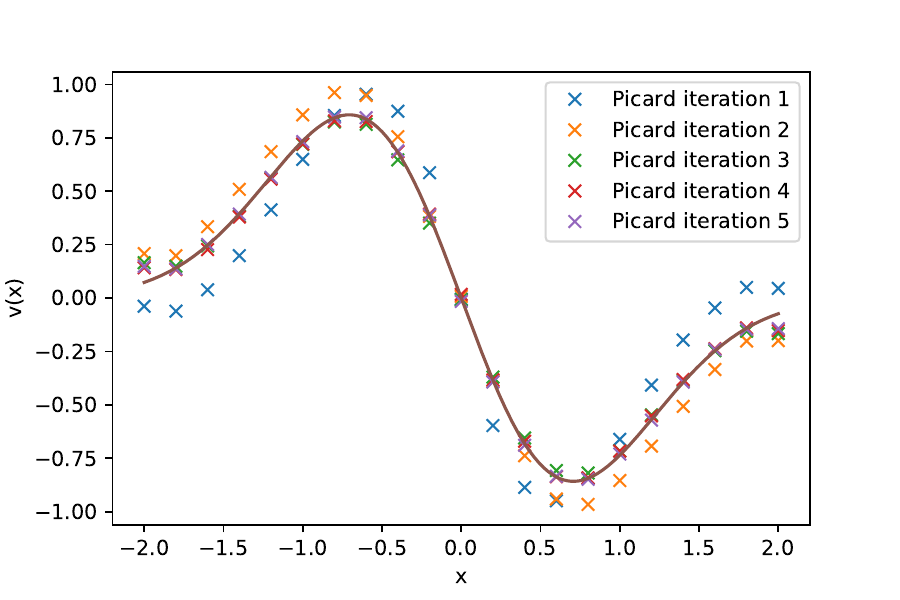}
    \caption{{Solution $v$ at different iterations. Parameters:}  $d=1$, $\gamma=1$, $a=2$, $\theta=1.8$,  $\widetilde{N}=10$, $\delta=0.2$, $M=10^5$.}
    \label{fig:dim1}
\end{figure}

\begin{figure}
    \centering
    \includegraphics[width=14cm]{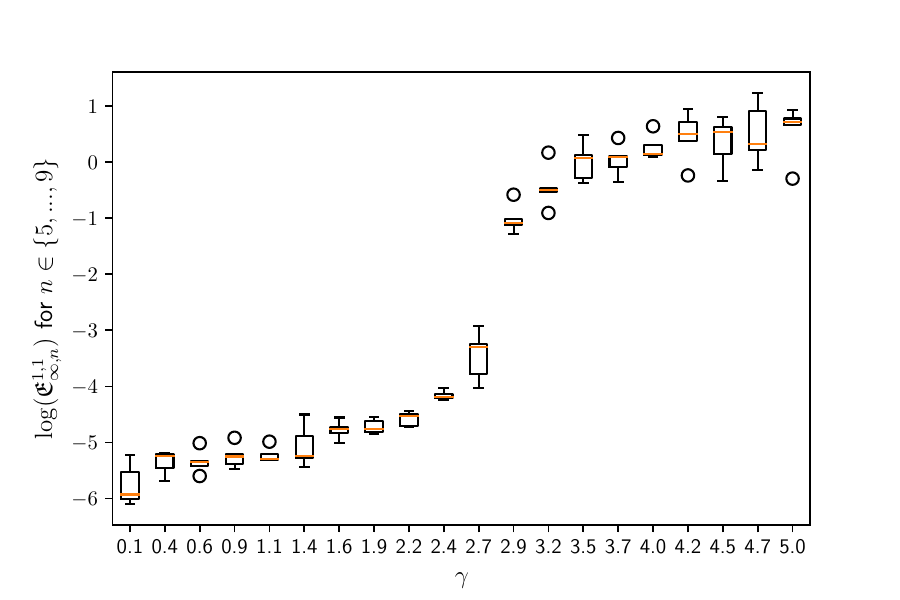}
    \caption{{Box plots of log-sup errors $\mathfrak{E}^{d,r}_{\infty,n}$ (with $d=1$, $r=1$) for different $n$, as a function of $\gamma$. Parameters:} 
    $a=2$, $\theta=1.8$,  $\widetilde{N}=10$, $\delta=0.2$, $M=10^5$.}
    \label{fig:gamma}
\end{figure}

Figure \ref{fig:gamma} shows the impact of $\gamma$ on the numerical convergence of our scheme. We can easily check that $K_{f,z}=2\gamma$. Unsurprisingly, $\gamma$ has an impact on the contraction of the Picard iteration and we can clearly observe the change of behaviour around $\gamma=2.7$. Note that the theoretical upper-bound \eqref{upperbound-kappa} for $\kappa_{\infty}$ is approximately equal to $ 4\sqrt{2}\gamma$ when $\alpha$ tends to $0$ which implies the contraction when $\gamma <(4\sqrt{2})^{-1}$: this theoretical bound is clearly far behind the threshold numerically observed.

Figure \ref{fig:theta} illustrates the impact of the choice of $\theta$ on the numerical convergence of our scheme. {Intuitively, $\theta$ plays a role on the statistical fluctuation of our scheme.} From our theoretical study, we have seen that we should take $\theta<a$. In practice, the threshold is softer: taking $\theta$ too large is clearly a bad idea but there is no numerical problems up to $\theta=3$ when $a=2$.

\begin{figure}
    \centering
    \includegraphics[width=14cm]{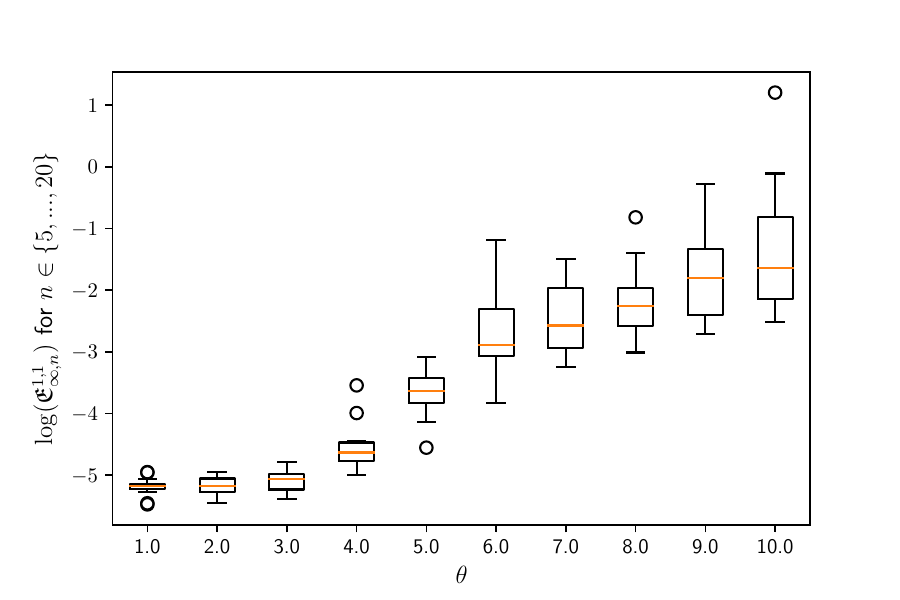}
    \caption{{Box plots of log-sup errors $\mathfrak{E}^{d,r}_{\infty,n}$ (with $d=1$, $r=1$) for different $n$, as a function of $\theta$. Parameters:} $a=2$, $\gamma=1$,  $\widetilde{N}=10$, $\delta=0.2$, $M=10^5$.}
    \label{fig:theta}
\end{figure}

\paragraph{\it Dimension 2.} The impact of $\gamma$ in dimension $2$ is illustrated in Figure \ref{fig:gamma-d2}. We observe that the threshold $\gamma \approx 2.7$ obtained in dimension $1$ remains of the same order in dimension $2$. Let us also remark that when we are close to the threshold, the error due to the truncation of the domain seems to propagate further since the sup error $\mathfrak{E}^{2,2}_{\infty,n}$ becomes smaller than $\mathfrak{E}^{2,1}_{\infty,n}$.

\begin{figure}
    \centering
    \includegraphics[width=7cm]{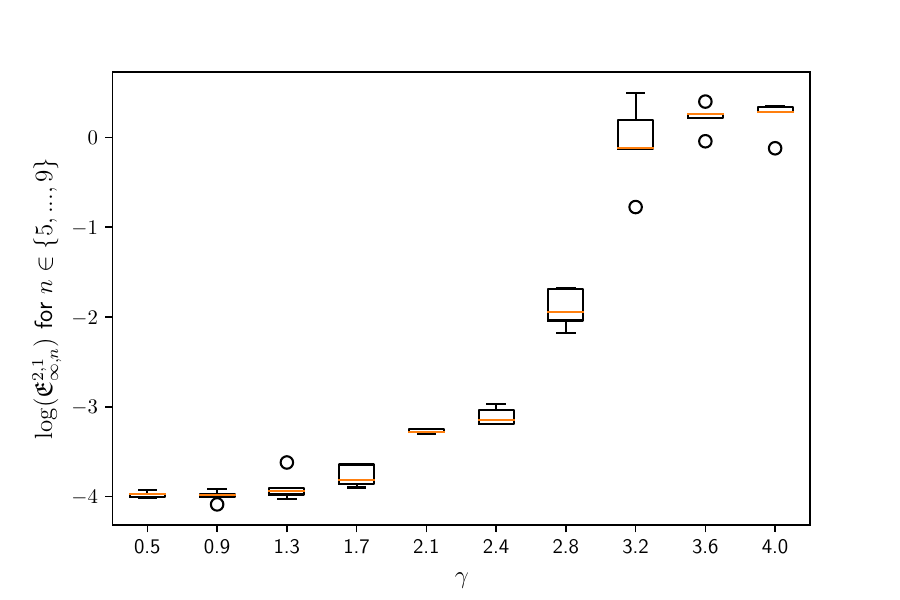}
    \includegraphics[width=7cm]{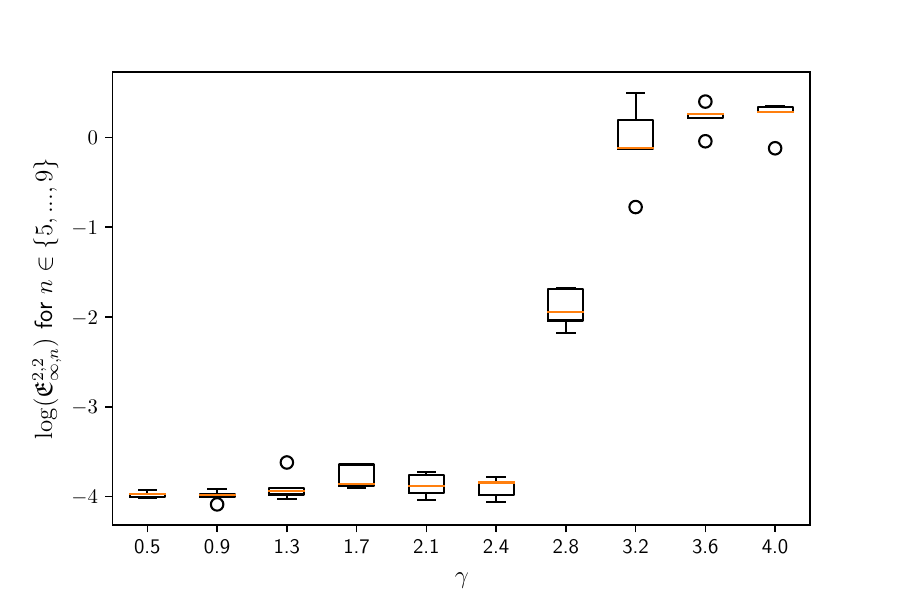}
    \caption{{Box plots of log-sup error $\mathfrak{E}^{d,r}_{\infty,n}$ (with $d=2$) for different $n$, as a function of $\gamma$. On the left $r=1$, on the right: $r=2$. Other parameters:} $a=2$, $\theta=2$, $\widetilde{N}=10$, $\delta=0.2$, $M=10^5$.}
    \label{fig:gamma-d2}
\end{figure}
\FloatBarrier

\paragraph{\it Higher dimension.} Finally, we also investigate the impact of the dimension in Figure \ref{fig:multi-dim} and Table \ref{tab:time}. Note  that we have reduced $M$ and $\widetilde{N}$ and we have increased $\delta$ with respect to previous numerical experiments. As previously remarked, $\mathfrak{E}^{d,1}_{\infty,3}$ is much better than $\mathfrak{E}^{d,0}_{\infty,3}$ whereas $\mathfrak{E}^{d,2}_{\infty,3}$ is of the same order as $\mathfrak{E}^{d,1}_{\infty,3}$. Numerical experiments have been conducted on a Intel(R) Core(TM) i5-4590S CPU @ 3.00GHz with 8GB RAM, by using Python 3 with Numba library but without parallelization. Obviously, due to the curse of dimension, we are not able to tackle dimensions beyond $d=5$ in a reasonable time. Nevertheless, it should be possible to implement our scheme by using parallelization paradigm, on CPU or even GPU, and then to increase a little bit the upper bound on covered dimensions. Moreover, it should be also possible to use neural networks instead of grid in order to tackle high dimensional problems. These two research directions are left for future works.

\begin{figure}
    \centering
    \includegraphics[width=13cm]{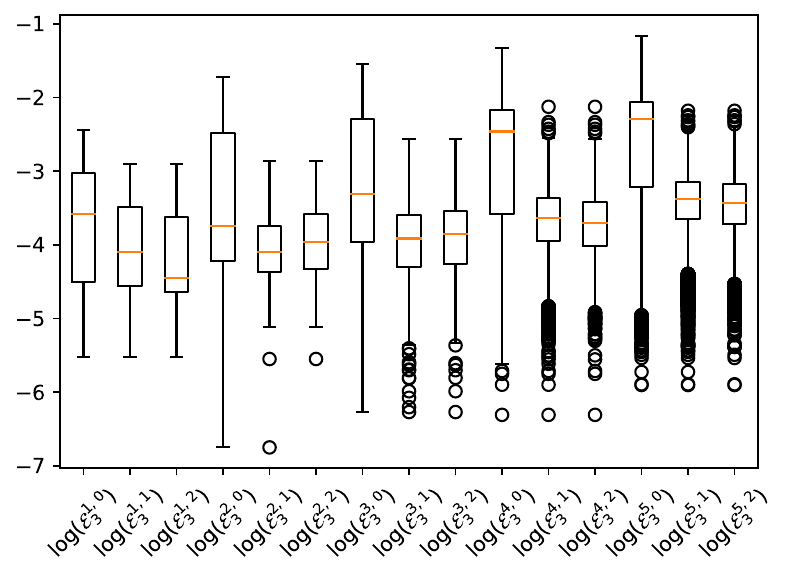}
    \caption{{Box plots of log grid errors $\mathcal{E}^{d,r}_{n}$ (with $d=1,2, 3, 4, 5$, $r=0, 1, 2$). Parameters:} 
    $a=2$, $\gamma=1$ $\theta=1.8$, $\widetilde{N}=5$, $\delta=0.4$, $M=10^4$}
    \label{fig:multi-dim}
\end{figure}

\begin{table}
    \centering
    \begin{tabular}{|c||c|c|c|c|c|}
        \hline
        Dimension $d$ & 1& 2 & 3 & 4 &5  \\
        \hline 
        Error $\mathfrak{E}^{d,1}_{\infty,3}$ & $5.49* 10^{-2}$ & $5.69* 10^{-2}$ & $7.69* 10^{-2}$ & $11.9* 10^{-2}$ & $11.3* 10^{-2}$\\
        \hline
        Error $\text{Mean}(\mathcal{E}^{d,1}_{3})$ & $2.31* 10^{-2}$ & $1.94* 10^{-2}$ & $2.16* 10^{-2}$ & $2.76* 10^{-2}$ & $3.49* 10^{-2}$\\
        \hline
        Time ($s$) & $4$ & $18$ & $217$ & $4155$ & $86639$ \\
        \hline
    \end{tabular}
    \caption{{Comparison of sup errors and computational times as a function of the dimension $d$. Parameters:} $a=2$, $\gamma=1$ $\theta=1.8$, $\widetilde{N}=5$, $\delta=0.4$, $M=10^4$}
    \label{tab:time}
\end{table}

\clearpage

%\section{Conclusion}

%\ar{Do we need this Section? I do not think so.}

\section{Proofs}

{\subsection{Proof of Theorem \ref{thm:representationBSDE:ergodic}}

\label{proof:thm:representationBSDE:ergodic}
\paragraph{\it Proof of item \eqref{thm:representationBSDE:ergodic:1}.}
Start from \eqref{eq:EBSDE:markov}, apply Fubini theorem and rewrite expectation using the transition probability density of $X$: it gives, for all $x \in \bR^d$ and $T>0$,
\begin{align}\label{eq:u:1}
 u(x)= \int_{\bR^d} u(y) p(0,x;T,y) \dy + \int_0^T \left(\int_{\bR^d} f(y,\bar{u}(y)) p(0,x;s,y) \dy -\lambda\right) \ds.
\end{align}
The density $p(0,x;s,y)$ is smooth in $x,y,s$ provided that $s>0$. 
Denote by $g(c,y)$ the density at point $y\in \bR^d$ of the Gaussian distribution $\cN(0,c\Id)$. Then, leveraging the bounds  \eqref{eq:bound:Sigma}, a direct and standard computation shows that, for any  compact set $K \subset \bR^d$, there exist positive constants $c_1,c_2$ such that, for all $s \in (0,T]$, $y \in \bR^d$, we have}
\begin{align*}
\sup_{x \in K} |\nabla_x p(0,x;s,y)| &\leq  c_1 \min (1,s^{-1/2}) g(s c_2,y).
\end{align*}
%\emmi{les $c_1,c_2$ dependent a priori de $T$, du coup écrire for all $s\in(0,T]$?}
{In addition, owing to the bounds \eqref{eq:prop:existence:uniqueness:EBSDE:bounds} on $u$ and $\bar u$, the functions 
\begin{align}
    y\mapsto & \min (1,T^{-1/2}) g(T c_2,y) \norm{u(y)}\\
    \text{and}\qquad
(s,y)\mapsto & \min (1,s^{-1/2}) g(s c_2,y)\norm{f(y,\bar{u}(y))}
\end{align}
are clearly integrable on $\bR^d$ and $[0,T]\times\bR^d$ respectively. Thus, $u$ as given in \eqref{eq:u:1} is $C^1$. By rewriting the first term on the right hand side of \eqref{eq:u:1} as an expectation, differentiating with respect to $x$, using $\nabla_x X_T^x=e^{-AT}$ (see \eqref{eq:OU:bis}) and the above arguments for the time integral, it readily follows
\begin{align}\label{eq:u:2}
 \nabla_x u(x)=\E{\nabla_x u(X^x_T)e^{-AT}} + \int_0^T \int_{\bR^d} f(y,\bar{u}(y)) \nabla_x  p(0,x;s,y) \dy  \ds.
\end{align}}

{\paragraph{\it Proof of item \eqref{thm:representationBSDE:ergodic:2}.}
Notice that for fixed $T$, one can look at \eqref{eq:EBSDE:intro} as a classical BSDE in a  finite horizon $[0,T]$, with terminal condition $u(X_T)$. Thus, the standard results \cite[Theorem 3.1]{ma:zhan:02:1} apply and give the first announced result. The boundedness of $v$ comes from Proposition \ref{prop:existence:uniqueness:EBSDE} and the invertibility of $\Sigma$.}

{\paragraph{\it Proof of \eqref{eq:representationZ:finiteT}.} 
We have $ \nabla_x \log(p(0,x;T,y))=\frac{\nabla_x p(0,x;T,y)}{p(0,x;T,y) }=(y-e^{-A s} x )^\top\Sigma_s^{-1} e^{-A s}$: therefore, passing from \eqref{eq:u:2} to \eqref{eq:representationZ:finiteT} is granted by Fubini theorem. {Indeed, starting from \eqref{eq:barU:tildeU} and using the Lipschitz property of $f$ with \eqref{eq:bound:esp:Utilde}-\eqref{eq:bound:Sigma}-\eqref{eq:bound:acc:X},
%-\eqref{eq:bound:expA}, 
%we get
%for some time-uniform constant $C$,
%$$\E{\norm{X_s^x-e^{-A s}x}^2}\leqslant C(s\wedge 1),$$
%and thanks to \eqref{eq:bound:Sigma} and \eqref{eq:bound:expA}, 
we get}
\begin{align}
\E{\norm{f(X_s^x, v(X_s^x)\Sigma)\bar{U}^x_s}} %&\leqslant C \E{(1+\norm{X_s^x})\norm{X_s^x-e^{-A s}x}}(1 \lor s^{-1}) \CA e^{-as}\\
&{\leqslant C \E{(1+\norm{X_s^x})(1 \lor s^{-1/2})}e^{-as}} \\
&\leqslant  C(1 \lor s^{-1/2})  e^{-as}
\label{eq:exp:decay}
\end{align}
%\ar{a verifier. Done and changed.}
with a new constant (depending on $x$ but uniform in $s$) at each line.
Using $\bar U^x_s=e^{-as} \tilde{U}_s$ {(see \eqref{eq:barU:tildeU})}, we complete the proof of \eqref{eq:representationZ:finiteT}.}

\paragraph{\it Proof of \eqref{eq:representationZ:finiteT:randomise}.} 
This is obtained by reinterpreting the time-integral \eqref{eq:representationZ:finiteT} as an integral with respect to the distribution of $E\overset d=\cG(\frac{1}{ 2},\theta)$: for any integrable function $\varphi:[0,T]\mapsto \bR$, 
\begin{align}
\int_0^T \varphi(s) e^{-as}\ds&=\E{\1_{E\leq T} \varphi(E) e^{-aE+\theta E} \frac{\sqrt \pi}{\sqrt \theta} \sqrt E}\\
&=\E{\1_{G\leq T \theta } \varphi\left(\frac{G}{\theta }\right) e^{-(a-\theta)/\theta G} \frac{\sqrt \pi}{\theta} \sqrt G}
\end{align}
using $G:=E\ \theta\overset d=\cG(\frac{1}{ 2},1)$. Then \eqref{eq:representationZ:finiteT:randomise} easily follows with $\varphi(s)=\tilde{U}_s f(\Xx_s,v(\Xx_s) \Sigma)$.

\paragraph{\it Proof of \eqref{eq:representationZ:infiniteT} and \eqref{eq:representationZ:infiniteT:randomise}.} 
Their proofs follow by passing to the limit in \eqref{eq:representationZ:finiteT} and \eqref{eq:representationZ:finiteT:randomise} as $T\to+\infty$. This  is possible thanks to the exponential decay in \eqref{eq:exp:decay} and to \eqref{eq:bound:expA}. 
\qed

\subsection{Proof of Proposition \ref{pr:Lpestimate}}
\label{proof:pr:Lpestimate}
Let us start by proving that $\Phi_T(w) \in C^0(\mathbb{R}^d,\mathbb{R}^{1 \times d})$ when we assume that $w \in C^0_{\rho}(\mathbb{R}^d,\mathbb{R}^{1 \times d})$. For any compact set $K \subset \mathbb{R}^d$ we have, using \eqref{eq:growth:rho} and the fact that $X_T^x-e^{-AT}x$ is a Gaussian random variable that does not depend on $x$,
\begin{align*}
    \E{\sup_{x \in K} |w(X_T^x)|} \leqslant C \E{\sup_{x \in K} e^{C|X_T^x - e^{-AT}x|}e^{C|e^{-AT}x|}} \leqslant C_K \E{e^{C|X_T^0|}}<+\infty.
\end{align*}
By using the linear growth of $f$, \eqref{eq:growth:rho} and same computations as previously, recalling \eqref{eq:bound:esp:Utilde} and the fact that $\tilde{U}$ does not depend on $x$, we also get
\begin{align*}
    &\int_0^T \E{\sup_{x \in K} \big|e^{-as}\tilde{U}_s f(X_s^x, w(X_s^x)\Sigma){\big|}}\ds\\ \leqslant&   C \int_0^T e^{-as} \E{\sup_{x \in K} e^{C|X_s^x - e^{-As}x|}e^{C|e^{-As}x|}}^{1/2} \E{|\tilde{U}_s|^2}^{1/2}\ds\\
    \leqslant& C_K \int_0^T e^{-as} (1 \wedge s^{-1/2})\ds<+\infty.
\end{align*}
%\emmi{je comprends qu'on remplace le terme constant + croissance lineare sur $f$ par une exp, ok? pas d'indication comme \eqref{eq:growth:rho} par exemple?}
%\ar{c'est ça, j'ai rajouté un peu d'indications}
%\emmi{ne devrait on pas mettre un controle explicite sur $\E{|\tilde{U}_s|^2}$? on l'utilise plusieurs fois}
%\ar{c'est fait : j'ai ajouté \eqref{eq:bound:esp:Utilde}}
%Let us remark that the previous upper bound is the same if we replace $\lfloor .\rfloor_{\|v\|_{\infty}}$ in $f$ by $\lfloor . \rfloor_{R}$ with {another upper bound} $R \in [\|v\|_{\infty},+\infty]$.
Then, Lebesgue's dominated convergence theorem gives us that $\Phi_T(w)$ is a continuous function. Same kind of computations lead us also to 
\begin{align*}
    \frac{|\Phi_T(w)(x)|}{\rho(x)} \leqslant& \|e^{-AT}\|\E{\frac{|w(X_T^x)|}{\rho(X_T^x)}\frac{\rho(X_T^x)}{\rho(x)}}\1_{T<+\infty}\\
    &+ \int_0^T e^{-as}C\E{|\tilde{U}_s|\frac{1+|X_s^x|+|w(X_s^x)|}{\rho(X_s^x)}\frac{\rho(X_s^x)}{\rho(x)}}\ds\\
    \leqslant& \CA e^{-aT} c_{T,\eqref{eq:condition:normA:rho:1}}\normr{w} +{ \CA} C(1+\normr{w})c_{T,\eqref{eq:condition:normA:rho:2}}<+\infty
\end{align*}
%\emmi{La definitiion dans \eqref{eq:condition:normA:rho:2} n'est pas tout à fait la bonne, il devrait y avoir $e^{-As}$, a discuter}
which implies that $\Phi_T(w) \in C^0_{\rho}(\mathbb{R}^d,\mathbb{R}^{1\times d})$. %Once again, the previous upper bound is independent with the term $\lfloor .\rfloor_{\|v\|_{\infty}}$.
%\ar{we need the growth at least linear of $\rho$ in the previous estimate}
Now, we consider $w_1,w_2 \in C^0_{\rho}(\mathbb{R}^d,\mathbb{R}^{1\times d})$. By using the Lipschitz property of $f$ %the $1$-Lipschitz property of $\lfloor .\rfloor_{\|v\|_{\infty}}$
and same computations as previously, we have
\begin{align*}
\frac{|\Phi_T(w_1)(x)-\Phi_T(w_2)(x)|}{\rho(x)} \leqslant& \E{\|e^{-AT}\| \normr{w_1-w_2}\frac{\rho(X_T^x)}{\rho(x)}}\1_{T <+\infty}\\
& + K_{f,z}\int_0^T e^{-as}\E{|\tilde{U}_s|\normr{w_1-w_2}\|\Sigma\|\frac{\rho(X_s^x)}{\rho(x)}}\ds\\
\leqslant & \CA e^{-aT}c_{T,\eqref{eq:condition:normA:rho:1}}\normr{w_1-w_2}\1_{T <+\infty}\\
& + K_{f,z} \|\Sigma\| \CA\int_0^T e^{-as}\E{|\Sigma_s^{-1}(X_s^x-e^{-As}x)|\frac{\rho(X_s^x)}{\rho(x)}}\ds \normr{w_1-w_2}\\
\leqslant & \left({\CA} e^{-aT}c_{T,\eqref{eq:condition:normA:rho:1}}\1_{T <+\infty} + K_{f,z} \|\Sigma\| \CA c_{T,\eqref{eq:condition:normA:rho:2}}\right) \normr{w_1-w_2}.
\end{align*}
The end of the Proposition is a straightforward application of the Banach fixed-point theorem.
\qed

\subsection{Proof of Proposition \ref{prop:contraction} }
\label{proof:prop:contraction}

We start by assuming that $\CA=1$ and $\rho(x) = e^{\alpha |x|}$ with $\alpha>0$. Then, by considering $Y \sim \mathcal{N}(0,I_d)$, we have 
\begin{align}
    c_{T,\eqref{eq:condition:normA:rho:1}} \leqslant & \sup_{x \in \mathbb{R}^d} \E{e^{\alpha |X_T^x-e^{-AT}x|} e^{\alpha(|e^{-AT}x|-|x|)}} \leqslant \E{e^{\alpha |\Sigma_T^{1/2} Y|}}
    \leqslant \E{e^{\alpha \|\Sigma_T\|^{1/2} |Y|}} \\
    \leqslant & \E{e^{\frac{\alpha \|\Sigma \Sigma^\top\|^{1/2}}{\sqrt{2a}} \sum_{i=1}^d |Y_i|}} = \left(2e^{\frac{\alpha^2 \|\Sigma \Sigma^\top\|}{4a}}F\left(\frac{\alpha \|\Sigma \Sigma^\top\|^{1/2}}{\sqrt{2a}}\right)\right)^d
    \label{eq:bound:cT1}
\end{align}
%\emmi{dire qu'on a utilisé CA=1 dans la 1ere inegalite}
%\ar{J'ai ajouté qq mots en dessous.}
where we have used that $C_A=1$ in the first inequality, {$\E{e^{\lambda |Y_1|}}=2 e^{\lambda^2/2}F(\lambda)$} and $F$ stands for the cumulative distribution function of the Gaussian distribution $\mathcal{N}(0,1)$. %\emmi{je ne suis convaincu qu'on gagne beaucoup à garder avec le terme F, au plus un facteur 2. Cela fait il vraiment une différence notable?}
By the same token, we also get
\begin{align}
    c_{T,\eqref{eq:condition:normA:rho:2}} \leqslant & \int_0^T e^{-as}\sup_{x \in \mathbb{R}^d} \E{|\Sigma_s^{-1}(X_s^x - e^{-As}x)|e^{\alpha|X_s^x - e^{-As}x|}e^{\alpha (|e^{-As} x|-|x|)}}\ds\\
    \leqslant & \int_0^T e^{-as} \E{|\Sigma_s^{-1/2}Y|^2}^{1/2}\E{e^{2\alpha |\Sigma_s^{1/2} Y|}}^{1/2}\ds\\
    \leqslant & \int_0^T e^{-as} \|\Sigma_s^{-1}\|^{1/2}\sqrt{d}\left(2e^{\frac{\alpha^2 \|\Sigma \Sigma^\top\|}{a}}F\left(\frac{\sqrt{2}\alpha \|\Sigma \Sigma^\top\|^{1/2}}{{\sqrt a}}\right)\right)^{d/2}\ds.
    \label{eq:bound:cT2}
\end{align}
Then, we can use \eqref{eq:bound:cT1}, \eqref{eq:bound:cT2} and \eqref{eq:constant:growth:sigma-1} to get the upper bound
\begin{align*}
\kappa_T \leqslant & \left(2e^{\frac{\alpha^2 \|\Sigma \Sigma^\top\|}{4a}}F\left(\frac{\alpha \|\Sigma \Sigma^\top\|^{1/2}}{\sqrt{2a}}\right)\right)^d e^{-aT}\\
&+ K_{f,z} \|\Sigma\| \sqrt{d}\left(2e^{\frac{\alpha^2 \|\Sigma \Sigma^\top\|}{a}}F\left(\frac{\sqrt{2}\alpha \|\Sigma \Sigma^\top\|^{1/2}}{{\sqrt a}}\right)\right)^{d/2} \int_0^T e^{-as} \left(c_{1,\eqref{eq:constant:growth:sigma-1}} + \frac{c_{2,\eqref{eq:constant:growth:sigma-1}}}{\sqrt{s}}\right) \ds.
\end{align*}
A simple study of this upper-bound as a function of $T$ shows that this upper-bound is minimal in $0$ or in $+\infty$.
%\emmi{pas clair pour moi}
%\emmi{en fait, la borne est croissante puis decroissante}
Moreover, this upper-bound is bigger than $1$ for $T=0$ which never gives us a contraction. Computing the integral when $T=+\infty$ gives us the bound written in the proposition. 

We assume now that {$\CA\geqslant 1$} and $\rho(x)=(1+\alpha|x|)^{\beta}$ with $\beta\geqslant 1$ and $\alpha>0$. We have now
\begin{align}
    c_{T,\eqref{eq:condition:normA:rho:1}} \leqslant & \sup_{x \in \mathbb{R}^d} \E{\left(\frac{1+\alpha|X_T^x-e^{-AT}x| +\alpha\CA e^{-aT}|x|}{1+\alpha|x|}\right)^\beta} \leqslant \E{\left(\sup_{x \in \mathbb{R}^d}\frac{1+\alpha|X_T^0| +\alpha\CA |x|}{1+\alpha|x|}\right)^\beta} \\
    \leqslant & \E{(\CA+\alpha|\Sigma_T^{1/2} Y|)^{\beta}}
    \leqslant \E{\left(\CA+\alpha\CA\left(\frac{\|\Sigma \Sigma^\top\|}{2a}\right)^{1/2} |Y|\right)^{\beta}}
    \label{eq:bound:cT3}
\end{align}
%\emmi{pourquoi ajouter $\CA$ dans le second terme après $\alpha$? idem après...\\ 
%dans les 2 cas, nous pourrions prendre $\CA\geq 1$.
%\ar{$\CA$ est bien présent, il n'a pas été ajouté artificiellement}
and, by the same token,
\begin{align}
    c_{T,\eqref{eq:condition:normA:rho:2}} \leqslant & \int_0^T e^{-as}\sup_{x \in \mathbb{R}^d} \E{|\Sigma_s^{-1}X_s^0|\left(\frac{1+\alpha|X_s^0|+\alpha C_A |x|}{1+\alpha|x|}\right)^{\beta}}\ds\\
    \leqslant & \int_0^T e^{-as}\E{|\Sigma_s^{-1/2}Y|^2}^{1/2} \E{\left(\CA+\alpha\CA \left(\frac{\|\Sigma \Sigma^\top\|}{2a}\right)^{1/2} |Y|\right)^{2\beta}}^{1/2}\ds\\
    \leqslant &  \sqrt{d}\E{\left(\CA+\alpha \CA\left(\frac{\|\Sigma \Sigma^\top\|}{2a}\right)^{1/2} |Y|\right)^{2\beta}}^{1/2} \int_0^T e^{-as} \left(c_{1,\eqref{eq:constant:growth:sigma-1}} + \frac{c_{2,\eqref{eq:constant:growth:sigma-1}}}{\sqrt{s}}\right) \ds.
    \label{eq:bound:cT4}
\end{align}
Then, we can do as previously: we use \eqref{eq:bound:cT3} and \eqref{eq:bound:cT4} to get an upper-bound for $\kappa_T$ that we can optimize in $T$. This upper-bound is minimal in $0$ or $+\infty$,
%\emmi{pas clair pour moi}\ar{la borne est croissante puis décroissante} 
the value in $0$ is bigger than $1$ while the value in $+\infty$ gives the bound advertised in the proposition.
\qed

\subsection{Proof of Lemma \ref{lem:estim:orlicz:ergo}}

\label{proof:lem:estim:orlicz:ergo}

By using the growth of $f$ and $\phi$ as well as Young inequality, \eqref{eq:bound:Sigma} and $\theta<a$, we have for all $c>0$,
%\emm{in Orlicz definition, $c$ is at the denominator and not at the numeratoir, I have changed accordingly}
\begin{align*} 
\E{ e^{{\frac 1c}\left|\frac{R^z(\phi)}{1+|z|}\right|}}\leqslant & \E{\int_0^{+\infty} e^{{\frac Cc}\sqrt{s}e^{-\left(\frac{a}{\theta}-1\right)s}\|\tilde{U}_s\|\frac{1+|X_s^z|}{1+|z|}} \frac{1}{\sqrt{\pi s}}e^{-s} \ds }\\ 
\leqslant& \E{\int_0^{+\infty} e^{{\frac Cc} \|\Sigma_s^{-1}\| |X_s^z-e^{-A s} z| \left(1+|X_s^z-e^{-A s} z| +\frac{|e^{-As}z|}{1+|z|}\right)}\frac{1}{\sqrt{\pi s}}e^{-s}  \ds }\\
\leqslant &  e^{{\frac Cc}}\int_0^{+\infty} \E{ e^{{\frac Cc} \|\Sigma_s^{-1}\| |X_s^z-e^{-A s} z|^2 }}\frac{1}{\sqrt{\pi s}}e^{-s}  \ds \\
\leqslant &  e^{{\frac Cc}}\int_0^{+\infty} %e^{cC \|\Sigma_s^{-1}\| \|\Sigma_s^{1/2}\|^2}
\E{ e^{{\frac Cc}  |%\Sigma_s^{1/2}
{\Sigma_s^{-1/2}}(X_s^z-e^{-A s} z)|^2 }}\frac{1}{\sqrt{\pi s}}e^{-s}  \ds %\\ 
%\leqslant & \mathbb{E}\left[\int_0^{+\infty} e^{cCe^{-as}\|\tilde{U}_s\|\left(1+|X_s^z-e^{-A s} z| +\frac{|e^{-As}z|}{\rho(z)}\right)} \ds \right]\\
 %\leqslant& \mathbb{E}\left[ e^{cC\frac{|\tilde{U}_{{E}}^z|\sqrt{{E}}e^{-(a-{\theta}) {E}}(1+|X_{\tilde{E}}^z|)}{e^{\alpha |z|}}}\right]
 %\leqslant e^{cC}\mathbb{E}\left[ e^{cC|\tilde{U}_{{E}}^z|\sqrt{{E}}e^{-(a-{\theta})E}}\right]\mathbb{E}\left[ e^{cC \frac{|X_{{E}}^z|}{e^{\alpha |z|}} }\right]\\
 %\leqslant & e^{cC} \int_0^{+\infty} \frac{\sqrt{{\theta}}}{\sqrt{\pi}\sqrt{t}} \mathbb{E}\left[e^{cC |\tilde{U}_{{t}}^z|\sqrt{{t}}e^{-(a-{\theta})t}}\right]e^{-{\theta} t} \dt  \int_0^{+\infty} \frac{\sqrt{{\theta}}}{\sqrt{\pi}\sqrt{t}} \mathbb{E}\left[e^{cC\left(|X_t^z-e^{-At}z|+ \frac{|e^{-At}z|}{e^{\alpha |z|}}\right)}\right]e^{-{\theta} t} \dt 
\end{align*}
%\emm{the exponent had the wrong sign}
where, as usual, the constant $C$ may change from one term to another but does not depend on $n$, $M$, $\delta$, $z$ {and $c$}. Since %$\Sigma_s^{1/2}(X_s^z-e^{-A s} z) \sim \mathcal{N}(0,I_d)$, 
${\Sigma_s^{-1/2}}(X_s^z-e^{-A s} z) \sim \mathcal{N}(0,I_d)$, 
$\E{ e^{{\frac Cc}  |{\Sigma_s^{-1/2}}(X_s^z-e^{-A s} z)|^2 }} = (1-2{\frac Cc})^{-d/2}$
 as soon as ${\frac Cc}<1/2$. Thus, for all $c>{2C}$,
\begin{align}
\E{ e^{{\frac 1c}\left|\frac{R^z(\phi)}{1+|z|}\right|}}
\leqslant e^{{\frac Cc}}\left(1-2{\frac Cc}\right)^{-d/2}.\label{eq:maj:exp:Rz}
\end{align}
% \quad \text{and} \quad \mathbb{E}\left[ e^{c\left\|\frac{R^z(\phi)}{1+|z|}\right\|}\right]
%\leqslant e^{cC}(1-2cC)^{-d/2}.$$
%\emm{computations are similar but redone in an order which is more logic to me, and I hope clear enough}
{For some ${\frac Cc}=\tau^\star<1/2$ small enough, the above upper bound is smaller than 2, which proves that \begin{align}\label{eq:norm:Rz}
\left|\frac{R^z(\phi)}{1+|z|}\right|_{\Psi}\leq \frac{C}{\tau^\star}=:c^{\star}.
\end{align}
In addition, combining $x+\frac{x^2}2\leq e^x-1$ for any $x\geqslant 0$ and \eqref{eq:maj:exp:Rz} with $c=c^{\star}$ we get  
\begin{align}\label{eq:moment:Rz}
    \frac 1{c^{\star}}\E{\left|\frac{R^z(\phi)}{1+|z|}\right|}+
    \frac {1}{2(c^{\star})^2}\E{\left|\frac{R^z(\phi)}{1+|z|}\right|^2}
     \leqslant \E{e^{\frac 1{c^{\star}}\left|\frac{R^z(\phi)}{1+|z|}\right|}}-1\leq 1.
    % \mathbb{E} \left[ \left\|\frac{R^z(\phi)}{1+|z|}\right\|^2\right]^{1/2} \leqslant C + C\mathbb{E}\left[ e^{c\left\|\frac{R^z(\phi)}{1+|z|}\right\|}\right] \leqslant C
\end{align}
This readily yields the second part of \eqref{ass:finite-Orlicz}.

The first part easily follows too, in view of \eqref{eq:norm:Rz}-\eqref{eq:moment:Rz} and since $|.|_{\Psi}$ satisfies the triangular inequality.}
%Finally, we can use previous estimates to get 
%$$\mathbb{E}\left[ e^{c\left\|\frac{R^z(\phi)}{1+|z|} -\mathbb{E}\left[ \frac{R^z(\phi)}{1+|z|}\right] \right\| }-1\right] \leqslant e^{cC} (1-2cC)^{-d/2}-1.$$
%\mathbb{E}_{v_M^n}\left[ e^{c\left\|\frac{R^z(Pv^n_M)}{\rho(z)}\right\|}\right]
%Since the previous upper bound is an increasing function of $c>0$ and tends to $0$ when $c \rightarrow 0^+$, then we get the first part of \eqref{ass:finite-Orlicz}.
% Moreover, we also have, for $c$ small enough,
% \begin{align*}
%     \left\|\mathbb{E}\left[ \frac{R^z(\phi)}{1+|z|}\right]\right\| \leqslant \mathbb{E} \left[ \left\|\frac{R^z(\phi)}{1+|z|}\right\|^2\right]^{1/2} \leqslant C + C\mathbb{E}\left[ e^{c\left\|\frac{R^z(\phi)}{1+|z|}\right\|}\right] \leqslant C
% \end{align*}
% which gives us the second part of \eqref{ass:finite-Orlicz}. Finally, we can use previous estimates to get 
% $$\mathbb{E}\left[ e^{c\left\|\frac{R^z(\phi)}{1+|z|} -\mathbb{E}\left[ \frac{R^z(\phi)}{1+|z|}\right] \right\| }-1\right] \leqslant e^{cC} (1-2cC)^{-d/2}-1.$$
% %\mathbb{E}_{v_M^n}\left[ e^{c\left\|\frac{R^z(Pv^n_M)}{\rho(z)}\right\|}\right]
% Since the previous upper bound is an increasing function of $c>0$ and tends to $0$ when $c \rightarrow 0^+$, then we get the first part of \eqref{ass:finite-Orlicz}.
\qed

%First of all, we have $|e^{-At}z|\leqslant |z|$ which implies that $|e^{-At}z|e^{-\alpha |z|}$ is upper-bounded (in $t$ and $z$). Moreover, 
%$X_t^z - e^{-At}z \sim \mathcal{N}(0,\Sigma_t)$ and $\|\Sigma_t\|$ is upper-bounded, so 
%$$\mathbb{E}\left[e^{cC\left(|X_t^z-e^{-At}z|+ \frac{|e^{-At}z|}{e^{\alpha |z|}}\right)}\right] \leqslant e^{(c+c^2)C}.$$
%Finally, since we have
%$|\tilde{U}_{{t}}^z|\sqrt{{t}}e^{-(a-{\theta})t} \leqslant \sqrt{{t}}e^{-(a-{\theta})t}|(X_t^z - e^{-At} z)^\top \Sigma_t^{-1}|$,
%$$\sqrt{{t}}e^{-(a-{\theta})t}\Sigma_t^{-1} (X_t^z - e^{-At} z) \sim \mathcal{N}(0,te^{-2(a-\theta)t}\Sigma_t^{-1})$$
%and $\|te^{-2(a-\theta)t}\Sigma_t^{-1}\|$ is upper-bounded, then 
%$$\mathbb{E}\left[e^{cC |\tilde{U}_{{t}}^z|\sqrt{{t}}e^{-(a-{\theta})t}}\right]\leqslant e^{(c+c^2)C}.$$
%By the same token we can prove that 
%$$\left\|\mathbb{E}_{v^n_M}\left[ \left(\frac{R^z(Pv^n_M)}{\rho(z)}\right)\right]\right\| \leqslant  \mathbb{E}_{v^n_M}\left[ \left\|\frac{R^z(Pv^n_M)}{\rho(z)}\right\|^{2}\right]^{1/2}\leqslant C.$$
%Then, we just have to mimick the end of the proof of Proposition \ref{prop:errornum:infinitehorizon}-1. in order to conclude.\qed

\subsection{Proof of Proposition \ref{prop:numericalerror}}

\label{proof:prop:numericalerror}

Let us denote, for $n \geqslant 0$,
$$e_{\infty,n+1}:=\E{ \sup_{x \in \bR^d} \left|\frac{ Pv^{n+1}_M(x) -v(x)}{\rho(x)}\right|}.$$
We have
\begin{align*}
 e_{\infty,n+1} \leqslant  \mathcal{E}_{\infty,1} +  \mathcal{E}_{\infty,2} +  \mathcal{E}_{\infty,3}
\end{align*}
with
\begin{align*}
 \mathcal{E}_{\infty,1} &:= \E{ \sup_{x \in \bR^d} \left| \frac{Pv^{n+1}_M(x) -P(\lfloor\mathbb{E}_{v^n_M}[R^{(\cdot)}(Pv^n_M)]\rfloor_{B})(x)}{\rho(x)}\right| },\\
 \mathcal{E}_{\infty,2} &:= \E{ \sup_{x \in \bR^d} \left|\frac{Pv(x)-P(\lfloor\mathbb{E}_{v^n_M}[R^{(\cdot)}(Pv^n_M)]\rfloor_{B})(x)}{\rho(x)}\right|},\\
 \mathcal{E}_{\infty,3} &:= \sup_{x \in \bR^d} \left|\frac{Pv(x)-v(x)}{\rho(x)}\right|.
\end{align*}
Here, the subscript in $\mathbb{E}_{v^n_M}[.]$ means that the expectation is conditionally to all the simulation noises up to iteration $n$.

\paragraph{\it Error $\mathcal{E}_{\infty,1}$:}
Recalling that $P$ is linear, $\lfloor . \rfloor_{\|v\|_{B}}$ is $1$-Lipschitz and applying inequality (3) in Proposition \ref{prop:properties-P}, we get
\begin{align}
 \nonumber \mathcal{E}_{\infty,1} &\leqslant  \E{\sup_{z \in \Pi} \frac{1}{\rho(z)}\left|\left\lfloor \mathbb{E}_{v^n_M}\left[R^{z}(Pv^n_M) \right]\right\rfloor_{B} - \left\lfloor \frac{1}{M_z} \sum_{j=1}^{M_z} R^{z}_{n+1,j}(Pv^n_M)\right\rfloor_{B}\right|}\sup_{x \in \bR^d} \left|\frac{P\rho(x)}{\rho(x)} \right|\\
 &\leqslant \sup_{x \in \bR^d} \left|\frac{P\rho(x)}{\rho(x)} \right| \E{\E{\sup_{z \in \Pi} |H_{z}(\phi)|}_{|\phi = Pv_M^n}}\,,
 \label{ineq:error1}
% \left| H_{z}(\phi) \right|_{\Psi}  \leqslant & C \frac{1}{\sqrt{M_z}}\frac{1+|z|}{\rho(z)}\mathbb{E}\left[ \left\| \frac{R^z(Pv^n_M)}{1+|z|} - \mathbb{E}_{v^n_M}\left[\frac{R^{z}(Pv^n_M)}{1+|z|} \right] \right\|^2 \right]^{1/2}\\
\end{align}
where 
$$H_z(\phi) := \frac{1}{M_z} \sum_{j=1}^{M_z} \frac{R^{z}_{n+1,j}(\phi)}{\rho(z)} - \E{\frac{R^{z}(\phi)}{\rho(z)} }.$$
%$$H_z = \frac{1}{M} \sum_{m=1}^M \frac{R^{z}_m(Pv^n_M)}{\rho(z)} - \mathbb{E}_{v^n_M}\left[\frac{R^{z}(Pv^n_M)}{\rho(z)} \right].$$

Since $\Psi$ is a convex and increasing function, Jensen inequality gives us
\begin{align*}
 \E{\sup_{z \in \Pi} |H_{z}(\phi)|} &\leqslant  \left| \sup_{z \in \Pi} |H_{z}(\phi)| \right|_{\Psi} \Psi^{-1}\left( \mathbb{E}\left[\Psi \left(\frac{\sup_{z \in \Pi} |H_{z}(\phi)|}{\left| \sup_{z \in \Pi} |H_{z}(\phi)|\, \right|_{\Psi}}\right)\right]\right)\\
 & \leqslant  \Psi^{-1}(1) \left| \sup_{z \in \Pi} |H_{z}(\phi)| \right|_{\Psi}.
\end{align*}
Then we upper bound the right hand side of the previous inequality by applying Maximal inequality \eqref{ineq:maximal} to get
\begin{align}
\label{ineq:EsupH}
 \E{\sup_{z \in \Pi} |H_{z}(\phi)|} &\leqslant C \Psi^{-1}(N)\sup_{z \in \Pi} \left| H_{z}(\phi) \right|_{\Psi} {=} C \log ({1+}N) \sup_{z \in \Pi} \left| H_{z}(\phi) \right|_{\Psi}.
\end{align}
Since $H_z(\phi)$ is a sum of centered i.i.d. r.v., we can apply Talagrand inequality \eqref{ineq:Talagrand}:
\begin{align*}
 \left| H_{z}(\phi) \right|_{\Psi} & \leqslant C\left( \mathbb{E}\left[ |H_{z}(\phi)| \right]+ \left| \sup_{1 \leqslant j \leqslant M_z} \frac{\left| \frac{R^z_{n+1,j}(\phi)}{\rho(z)} - \mathbb{E}\left[\frac{R^{z}(\phi)}{\rho(z)} \right] \right|}{M_z} \right|_{\Psi} \right)
\end{align*}
%\ar{iid seulement conditonnellement à $v_M^n$, pb pour l'application des ineg de Talagrand et maximles?}
which gives us, using the upper-bound \eqref{ass:finite-Orlicz} and Maximal inequality \eqref{ineq:maximal},
\begin{align*}
 \left| H_{z}(\phi) \right|_{\Psi}  &\leqslant  C \frac{1}{\sqrt{M_z}}\frac{1+|z|}{\rho(z)}\E{ \left| \frac{R^z(\phi)}{1+|z|} - \E{\frac{R^{z}(\phi)}{1+|z|} } \right|^2 }^{1/2}\\
 &\qquad+ C\frac{\Psi^{-1}(M_z)}{M_z}\frac{1+|z|}{\rho(z)}\left| \frac{R^z(\phi)}{1+|z|} - \E{\frac{R^{z}(\phi)}{1+|z|} } \right|_{\Psi}\\
 %\leqslant & C(M_z^{-1/2}+M_z^{-1} \Psi^{-1}(M_z))\frac{1+|z|}{\rho(z)} 
 &\leqslant C M_z^{-1/2}\frac{1+|z|}{\rho(z)}.  
\end{align*}
We just have to plug the previous bound into \eqref{ineq:EsupH} and \eqref{ineq:error1} to get
$$ \mathcal{E}_{\infty,1} \leqslant C\sup_{x \in \bR^d} \left|\frac{P\rho(x)}{\rho(x)} \right| \inf_{z \in \Pi} \frac{\log({1+}N)(1+|z|)}{\sqrt{M_z} \rho(z)}.$$

%and
% 
% \begin{eqnarray*}
%  \mathbb{E}[\sup_{z \in \Pi} |H_{z}|] &=& \frac{1}{\lambda} \log \exp \left( \lambda \mathbb{E}[\sup_{z \in \Pi} |H_z|] \right)
%   \leqslant  \frac{1}{\lambda} \log  \mathbb{E}\left [e^{\lambda \sup_{z \in \Pi} |H_z|}\right]\\ 
%   &\leqslant& \frac{1}{\lambda} \log  \mathbb{E}\left[ \sum_{z \in \Pi} e^{ \lambda |H_z|}\right]  \leqslant \frac{1}{\lambda} \log \left(N\sup_{z \in \Pi} \mathbb{E}\left[ e^{\lambda |H_z|}\right]\right)   \\
%   &\leqslant& \frac{\log N}{\lambda} +\frac{1}{\lambda} \sup_{z \in \Pi} \log \mathbb{E}\left [ e^{ \frac{\lambda}{\sqrt{M}} |\sqrt{M}H_z|}\right] 
% \end{eqnarray*}
% 
% %\begin{eqnarray*}
% % \mathbb{E}[\sup_{z \in \Pi} |H_{z}|] &\leqslant&  \frac{\log N}{\lambda} +\frac{1}{\lambda} \sup_{z \in \Pi} \log \mathbb{E}\left [ e^{ \frac{\lambda}{\sqrt{M}} |\sqrt{M}H_z|}\right] 
% %\end{eqnarray*}
% 
% %We assume that there exists $t>0$ and $C>0$ that does not depend on $M$, $n$, $\delta$, such that
% %$$\mathbb{E}\left [ e^{ t |\sqrt{M}H_z|}\right] \leqslant C, \forall z \in \Pi, M \in \mathbb{N}^*.$$
% 
% Using \eqref{ass:expo_moment}, we just have to take $\lambda=t\sqrt{M}$ to get that 
% \begin{align*}
%  \mathbb{E}[\sup_z |H_z|] = O\left( \frac{\log N}{\sqrt{M}} \right).
% \end{align*}
%    

\paragraph{\it Error $\mathcal{E}_{\infty,2}$:}

Recalling Proposition \ref{pr:Lpestimate}, we have that $v$ is the unique solution of the fixed point equation $\Phi_{\infty}(v)=v$. We get, by using the linearity of $P$, the first inequality in Proposition \ref{prop:properties-P}, the fact that $\lfloor . \rfloor_{B}$ is $1$-Lipschitz and Proposition \ref{pr:Lpestimate}, 
\begin{equation*}
	\begin{split}
		&\E{ \sup_{x \in \bR^d} \left|\frac{Pv(x)-P(\lfloor\mathbb{E}_{v^n_M}[R^{(\cdot)}(Pv^n_M)]\rfloor_{B})(x)}{\rho(x)}\right|} \\
		 = & \E{ \sup_{x \in \bR^d} \left|\frac{P(\lfloor\Phi(v(\cdot))\rfloor_{B})(x)-P(\lfloor\Phi(Pv^n_M(\cdot))\rfloor_{B})(x)}{\rho(x)}\right|} \\
		 \leqslant  & \E{ \sup_{x \in \bR^d} \left|\frac{\Phi(v(\cdot))(x)-\Phi(Pv^n_M(\cdot))(x)}{\rho(x)}\right|}\,
		 \leqslant  \kappa_{\infty}  \E{\sup_{x \in \bR^d} \left|\frac{v(x)-Pv^n_M(x)}{\rho(x)}\right|} = \kappa_{\infty} e_{\infty,n}.\,
		 %\leqslant & \Kx^2 c_{2,\eqref{eq:condition:normA:rho:p}}^2 c_{2,\eqref{eq:condition:stability:rho}}^2 e_{2,n},
	\end{split}
\end{equation*}

\paragraph{\it Error $\mathcal{E}_{\infty,3}$:}
We have assumed that $v$ is $C^2$. Then, by using the second inequality in Proposition \ref{prop:properties-P} and the boundedness of $v$, we have
\begin{align*}
 \sup_{x \in \bR^d} \left|\frac{Pv(x)-v(x)}{\rho(x)}\right| & \leqslant \sup_{x \in \Box} \left|\frac{Pv(x)-v(x)}{\rho(x)}\right| + \sup_{x \in \bR^d \setminus \Box} \left|\frac{Pv(x)-v(x)}{\rho(x)}\right|\\
 & \leqslant C\delta^2+ \frac{C}{\inf_{x \in \bR^d \setminus \Box}  \rho(x)}
  %& \leqslant C(\sup_{x \in \Box} |\nabla^2 v(x)|)\delta^2+  \frac{\sup_{x \in \partial \Box}|v(x)|}{\inf_{x \in \partial \Box} \rho(x)} + \sup_{x \in D \setminus \Box} \frac{|v(x)|}{\rho(x)}.
\end{align*}
where $C$ does not depend on $\Pi$ since $\nabla^2 v$ is assumed to be bounded on $\bR^d$. Since $0 \in \Box$, we also have
$ \inf_{x \in \bR^d \setminus \Box}  \rho(x) = \inf_{x \in \partial \Box} \rho(x)$.

\paragraph{\it Error $e_{\infty,n}$:} 
Now we just have to collect previous estimates to get
\begin{align*}
 e_{\infty,n+1} \leqslant & C\sup_{x \in \bR^d} \left|\frac{P\rho(x)}{\rho(x)} \right| \inf_{z \in \Pi} \frac{\log({1+}N)(1+|z|)}{\sqrt{M_z} \rho(z)}+\kappa_{\infty} e_{\infty,n}+C\delta^2+  \frac{C}{\inf_{x \in \partial \Box}  \rho(x)}
\end{align*}
which gives us 
\begin{align*}
 e_{\infty,n} \leqslant &\left(\sum_{k=0}^{n-1} \kappa_{\infty}^k\right) \left(C\sup_{x \in \bR^d} \left|\frac{P\rho(x)}{\rho(x)} \right| \inf_{z \in \Pi} \frac{\log({1+}N)(1+|z|)}{\sqrt{M_z} \rho(z)}+C\delta^2+\frac{C}{\inf_{x \in \partial \Box}  \rho(x)}\right) +\kappa_{\infty}^n e_{\infty,0}.
\end{align*}
This last inequality is the one we wanted to prove.
%where $\eta=\Kx c_{\infty,\eqref{eq:condition:normA:rho}}\sup_{x \in D} \left|\frac{P\rho(x)}{\rho(x)}\right|$.
\qed

\def\cprime{$'$}

%\bibliographystyle{alpha}
%\bibliography{./these.bib}

\end{document}